\newcommand{\Spvek}[2][r]{%
  \gdef\@VORNE{1}
  \left(\hskip-\arraycolsep%
    \begin{array}{#1}\vekSp@lten{#2}\end{array}%
  \hskip-\arraycolsep\right)}
\def\vekSp@lten#1{\xvekSp@lten#1;vekL@stLine;}
\def\vekL@stLine{vekL@stLine}
\def\xvekSp@lten#1;{\def\temp{#1}%
  \ifx\temp\vekL@stLine
  \else
    \ifnum\@VORNE=1\gdef\@VORNE{0}
    \else\@arraycr\fi%
    #1%
    \expandafter\xvekSp@lten
  \fi}
\newtheorem{thm}{Theorem}[section]
\newtheorem{lem}[thm]{Lemma}
\newtheorem{rem}[thm]{Remark}
\theoremstyle{definition}
\newtheorem{defn}{Definition}[section]
\newcommand{\scr}[1]{\mathscr #1}
\definecolor{wco}{rgb}{0.5,0.2,0.3}
\numberwithin{equation}{section} \theoremstyle{remark}
\newcommand{\ua}{\uparrow}
\title{{\bf     Harnack   Inequalities for $G$-SDEs with Multiplicative Noise}
\footnote{Supported in
 part by   NNSFC (11801403, 11801406).}
}
\author{
{\bf   Fen-Fen  Yang }\\
\footnotesize{  Center for Applied Mathematics, Tianjin University, Tianjin 300072, China}\\
\footnotesize{  yangfenfen@tju.edu.cn}}
\begin{document}
\allowdisplaybreaks
\def\R{\mathbb R}  \def\ff{\frac} \def\ss{\sqrt} \def\B{\mathbf
B}
\def\N{\mathbb N} \def\kk{\kappa} \def\m{{\bf m}}
\def\ee{\varepsilon}\def\ddd{D^*}
\def\dd{\delta} \def\DD{\Delta} \def\vv{\varepsilon} \def\rr{\rho}
\def\<{\langle} \def\>{\rangle} \def\GG{\Gamma} \def\gg{\gamma}
  \def\nn{\nabla} \def\pp{\partial} \def\E{\mathbb E}
\def\d{\text{\rm{d}}} \def\bb{\beta} \def\aa{\alpha} \def\D{\scr D}
  \def\si{\sigma} \def\ess{\text{\rm{ess}}}
\def\beg{\begin} \def\beq{\begin{equation}}  \def\F{\scr F}
\def\Ric{\text{\rm{Ric}}} \def\Hess{\text{\rm{Hess}}}
\def\e{\text{\rm{e}}} \def\ua{\underline a} \def\OO{\Omega}  \def\oo{\omega}
 \def\tt{\tilde} \def\Ric{\text{\rm{Ric}}}
\def\cut{\text{\rm{cut}}} \def\P{\mathbb P} \def\ifn{I_n(f^{\bigotimes n})}
\def\C{\scr C}   \def\G{\scr G}   \def\aaa{\mathbf{r}}     \def\r{r}
\def\gap{\text{\rm{gap}}} \def\prr{\pi_{{\bf m},\varrho}}  \def\r{\mathbf r}
\def\Z{\mathbb Z} \def\vrr{\varrho} \def\ll{\lambda}
\def\L{\scr L}\def\Tt{\tt} \def\TT{\tt}\def\II{\mathbb I}
\def\i{{\rm in}}\def\Sect{{\rm Sect}}  \def\H{\mathbb H}
\def\M{\scr M}\def\Q{\mathbb Q} \def\texto{\text{o}} \def\LL{\Lambda}
\def\Rank{{\rm Rank}} \def\B{\scr B} \def\i{{\rm i}} \def\HR{\hat{\R}^d}
\def\to{\rightarrow}\def\l{\ell}\def\iint{\int}
\def\EE{\scr E}\def\no{\nonumber}
\def\A{\scr A}\def\V{\mathbb V}\def\osc{{\rm osc}}
\def\BB{\scr B}\def\Ent{{\rm Ent}}\def\3{\triangle}\def\H{\scr H}
\def\U{\scr U}\def\8{\infty}\def\1{\lesssim}\def\HH{\mathrm{H}}
 \def\T{\scr T} \def\BE{\bar{\mathbb{E}}}
\maketitle

\begin{abstract}
 The Harnack and $\log$  Harnack  inequalities  for stochastic differential equation driven by $G$-Brownian motion with  multiplicative noise   are derived by means of coupling by change of measure,  which extend the correspongding results derived in \cite{W} under the linear expectations. Moreover, we generalize the gradient estimate  under nonlinear  expectations appeared in \cite{song}.
\end{abstract} \noindent

 Keywords: Harnack inequaity; gradient estimate; multiplicative noise; $G$-Brownian motion; SDEs.
 \vskip 2cm

\section{Introduction}
For the extensive applications in strong Feller property,   uniqueness of invariant   probability measures, functional inequalities, and  heat kernal estimates, Wang's Harnack inequality has been developed \cite{W}.
To establish  Harnack inequality, Wang  introduced the coupling by change of measures, see   \cite{bao, W1, W4} and references within for details.
However, up to now, most of these papers only focus on  the case of  linear expectation spaces.
Song \cite{song} firstly derived the gradient estimates for nonlinear diffusion semigroups by using the method of Wang's coupling by change of measure,
after Peng \cite{peng2, peng1} established the systematic  theory of $G$-expectation theory, $G$-Brownian motion and stochastic differential equations driven by $G$-Brownian
motion ($G$-SDEs, in short).
  Subsequently, Yang \cite{Yang} generalized the theory of Wang's Harnack inequality  and its applications  to nonlinear expectation framework, where the
  noise   is additive. Moreover, Wang's Harnack inequality and gradient estimates are also proved for the degenerate (functional) case in \cite{XY}.
  An interesting question is whether it can be generalized
  to the form of multiplicative noise. The answer is positive as some of the results are showed in \cite{song}, whereas neither the form of  $G$-SDEs  with  the term of $\d \langle B^i, B^j\rangle_t$, nor the Harnack    inequality studied, where $B_t$ is a $d$-dimensional
$G$-Brwonian motion, and $\langle
 B^i,B^j\rangle_t$ stands for the mutual variation process of the
 $i$-th component $B^i_t$ and the
 $j$-th component $B^j_t$.  In this paper, we will improve and extend the above assertions to the multiplicative noise.
    Consider the following  $G$-SDE
\begin{equation}\label{0}
\d X_t=b(t, X_t) \d t+\sum_{i,j=1}^dh_{ij}(t,X_t)\d \langle B^i, B^j \rangle_t+\sum_{i=1}^d\sigma_i(t, X_t) \d B^i_t,
\end{equation}
  where $b, h_{ij}=h_{ji}:[0,T]\times   \R^d\to \R^d$ and $\si:[0,T]
\times\R^d\to \R^d\otimes \R^d$.
 We aim to establish the Harnack   inequality for the $G$-SDE \eqref{0}.  In addition, we also prove  the gradient estimate. To this end, we firstly recall some basic facts on the $G$-expectation and  $G$-Brownian motion.

For a positive integer $d$, let $(\R^d, \<\cdot,\cdot\>,|\cdot|)$ be the $d$-dimensional Euclidean space, $\mathbb{S}^d$
the collection of all symmetric $d\times d$-matrices.
For any fixed $T>0$, $$\OO_T=\{\omega|[0,T]\ni
t\mapsto\omega_t\in\R^d \mbox{ is continuous with
}\omega(0)={ 0} \}$$ endowed with the uniform form.  Let
$B_t(\omega)=\omega_t, \omega\in\OO_T, $ be the canonical process.
Set
  $$L_{ip}(\Omega_T)=\{\varphi(B_{t_1}, \cdots, B_{t_n}),
  n\in\mathbb{N},
   t_1,\cdot \cdot \cdot, t_n\in [0,T],\varphi\in C_{b,lip}(\R^{d}\otimes\R^n)\},$$
where  $C_{b,lip}(\mathbb{R}^{d}\otimes\R^n)$ denotes  the set of
bounded Lipschitz functions.  Let $G: \mathbb{S}^d \to \R $ be a monotonic, sublinear   and homogeneous function; see e.g.
\cite[p16]{peng4}.   Now we give the construction of  $G$-expectation which is also used  in   \cite{RY}.
For any $\xi \in L_{ip}(\Omega_T)$,  i.e.,
 $$\xi(\omega)=\varphi(\omega(t_1),\cdot \cdot \cdot,\omega(t_{n})), \ 0=t_0< t_1<\cdot \cdot \cdot<t_n=T,$$
  the conditional $G$-expectation is  defined by
  $$
 \bar{\mathbb{E}}_t[\xi]:=u_k(t,\omega(t);\omega(t_1),\cdot \cdot \cdot,\omega(t_{k-1}) ),  \  \xi \in L_{ip}(\Omega_T), \ t \in [t_{k-1}, t_k), \ k=1, \cdot\cdot\cdot, n,
 $$
 where $(t,x)\mapsto u_k(t,x; x_1, \cdot\cdot\cdot,x_{k-1})$, $k=1,\cdot \cdot\cdot,n$, solves the following $G$-heat equation
\begin{equation}\label{Gheat}
 \begin{cases}
 \partial_tu_k+G(\partial_x^2u_k)=0, \ (t,x) \in [t_{k-1}, t_k)\times \R^d, \ k=1,\cdot \cdot\cdot,n,\\
 u_k(t_k,x;x_1,\cdot\cdot\cdot,x_{k-1})= u_{k+1}(t_k,x;x_1,\cdot\cdot\cdot,x_{k-1},x_k), \ k=1, \cdot\cdot\cdot, n-1,\\
  u_n(t_n,x;x_1,\cdot\cdot\cdot,x_{n-1})=\varphi(x_1,\cdot\cdot\cdot,x_{n-1},x), \ k=n.
 \end{cases}
 \end{equation}
The corresponding $G$-expectation of $\xi$ is defined by $\bar{\mathbb{E}}[\xi]=\bar{\mathbb{E}}_0[\xi]$.

According to \cite{peng4}, there exists a bounded, convex, and closed subset $\Gamma\subset \mathbb{S}^d_+$ such that
\begin{equation}\label{GA}
 G(A)=\frac{1}{2}\sup _{Q\in \Gamma}\mbox{trace}[AQ], \ A\in\mathbb{S}^d.
\end{equation}
In particular,
fix $\underline{\sigma}, \overline{\sigma}\in \mathbb{S}_+^d$ with $\underline{\sigma}<\overline{\sigma}$, let
$\Gamma=[\underline{\sigma}^2, \bar{\sigma}^2],$ then
\begin{equation}\label{G(A)}
 G(A)=\frac{1}{2}\sup _{\gamma\in [\underline{\sigma}, \bar{\sigma}]}\mbox{trace}(\gamma^2 A), \ A\in\mathbb{S}^d.
\end{equation}
Denote  $L_G^{p}(\Omega_T)$  be the completion of $L_{ip}(\Omega_T)$  under the norm $(\bar{\mathbb{E}}[|\cdot|^p])^{\frac{1}{p}}$, $p\geq1. $
\begin{thm} (\cite{Denis, peng4}) There exists a weakly compact subset $\mathcal{P} \subset M_1(\Omega_T)$, the set of probability
measures on $(\Omega_T , \mathcal{B}(\Omega_T))$, such that
\begin{equation}\label{rep}
\BE[\xi] = \sup_{P \in \mathcal{P}}\E_P[\xi] \ for  \ all  \ \xi \in L_{G}^1(\Omega_T).
\end{equation}
$\mathcal{P}$ is called a set that represents $\BE$.
\end{thm}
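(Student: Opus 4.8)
The plan is to read this as a duality (representation) theorem for the sublinear functional $\BE$, and to produce $\mathcal{P}$ in two stages: first a purely functional-analytic step yielding a family of finitely additive normalized positive functionals, and then an analytic step upgrading these to genuine (countably additive) probability measures while extracting weak compactness. First I would record that, by the properties of $G$ in \eqref{GA} (monotonicity, sublinearity, homogeneity) together with the comparison principle for the $G$-heat equation \eqref{Gheat}, the functional $\BE$ is a sublinear expectation on $L_{ip}(\Omega_T)$: it is monotone, preserves constants ($\BE[c]=c$), is positively homogeneous ($\BE[\lambda\xi]=\lambda\BE[\xi]$ for $\lambda\ge0$) and subadditive ($\BE[\xi+\eta]\le\BE[\xi]+\BE[\eta]$). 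Since $L_{ip}(\Omega_T)$ is dense in $L_G^1(\Omega_T)$ and $\BE$ is $1$-Lipschitz for the $L_G^1$-norm, it extends uniquely to a sublinear expectation on $L_G^1(\Omega_T)$, so it suffices to realize the representation on this space.

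Next I would carry out the Hahn-Banach step. Viewing $\BE$ as a sublinear (hence convex, positively homogeneous) functional on the real vector space $L_G^1(\Omega_T)$, for each fixed $\xi_0$ one dominates the linear functional $t\mapsto t\,\BE[\xi_0]$ on $\mathbb{R}\xi_0$ by $\BE$ and extends it, by Hahn-Banach, to a linear functional $\ell_{\xi_0}\le\BE$ on all of $L_G^1(\Omega_T)$ with $\ell_{\xi_0}(\xi_0)=\BE[\xi_0]$. Monotonicity of $\BE$ forces each such $\ell$ to be positive, and $\BE[\pm1]=\pm1$ forces $\ell(1)=1$; thus every dominated $\ell$ is a normalized positive linear functional, i.e. integration against a finitely additive probability, and $\BE[\xi]=\sup_{\ell\le\BE}\ell(\xi)$. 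This yields the representation at the level of finitely additive set functions, but the soft argument says nothing about countable additivity or compactness.

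The decisive step is to replace these finitely additive functionals by countably additive measures on $(\Omega_T,\mathcal{B}(\Omega_T))$ and to obtain weak compactness. I would restrict attention to $C_b(\Omega_T)$ and verify that $\BE$ is continuous from above there, i.e. $\BE[\xi_n]\downarrow0$ whenever $\xi_n\downarrow0$ pointwise with $\xi_n\in C_b(\Omega_T)$. Since any dominated positive functional satisfies $0\le\ell(\xi_n)\le\BE[\xi_n]$, this continuity is inherited by every such $\ell$, and a Daniell-Stone argument then promotes each $\ell$ to a Radon probability measure on the Polish space $\Omega_T$. The continuity from above of $\BE$, in turn, I would derive from tightness of the candidate family: since $\Gamma\subset\mathbb{S}^d_+$ is bounded, every representing measure is the law of a continuous process with volatilities confined to $[\underline\sigma,\overline\sigma]$, yielding uniform moment estimates $\sup_P\E_P[\sup_{t\le T}|B_t|^p]<\8$ and uniform Kolmogorov-type bounds on increments. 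Prokhorov's theorem then gives relative weak compactness, and taking $\mathcal{P}$ to be the weak closure of the dominated family (domination $\ell\le\BE$ being preserved in the limit, as $\ell\mapsto\ell(\xi)$ is continuous for each fixed $\xi$) produces the weakly compact representing set.

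The main obstacle is precisely this last analytic upgrade rather than the Hahn-Banach step: establishing continuity from above and tightness for $\BE$. Equivalently, one may bypass the abstract route by exhibiting $\mathcal{P}$ explicitly as the family of laws of the It\^o processes $\int_0^{\cdot}\gamma_s\,\d W_s$, with $W$ a standard $d$-dimensional Brownian motion on a classical space and $\gamma$ ranging over progressively measurable processes valued in $[\underline\sigma,\overline\sigma]$; checking that $\sup_{P\in\mathcal{P}}\E_P[\,\cdot\,]$ reproduces $\BE$ on $L_{ip}(\Omega_T)$ uses the stochastic-control (HJB) interpretation of the $G$-heat equation \eqref{Gheat}, while weak compactness of this explicit family is immediate from the uniform volatility bound.
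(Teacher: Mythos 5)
The paper does not prove this theorem at all: it is imported verbatim from \cite{Denis, peng4}, so your proposal can only be measured against the proofs in those references. Your second, ``bypass'' route is in substance exactly their argument: take $\mathcal{P}_M$ to be the laws of $\int_0^{\cdot}h_s\,\d W_s$ with $h$ progressively measurable and valued in $\Gamma^{1/2}$ (precisely the family recorded in the paper's own Remark following the theorem), identify $\sup_{P\in\mathcal{P}_M}\E_P$ with $\BE$ on $L_{ip}(\Omega_T)$ through the stochastic-control interpretation of the $G$-heat equation \eqref{Gheat}, prove tightness of $\mathcal{P}_M$ via uniform moment and Kolmogorov-type modulus estimates (uniform because $\Gamma$ is bounded), let $\mathcal{P}$ be the weak closure, and extend \eqref{rep} from $L_{ip}(\Omega_T)$ to $L_G^1(\Omega_T)$ by density. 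That branch is correct in outline, although ``weak compactness \dots is immediate'' undersells the Kolmogorov/Prokhorov step, and the extension from $L_{ip}(\Omega_T)$ to $L_G^1(\Omega_T)$ (density plus uniform $L^1$-continuity of both sides of \eqref{rep}) deserves an explicit sentence.

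Your primary, abstract route, however, contains a genuine circularity as written. After the Hahn--Banach step you possess only finitely additive, normalized, positive functionals $\ell\le\BE$ on $L_G^1(\Omega_T)$; they carry no path-space structure whatsoever. You then propose to obtain continuity from above of $\BE$ on $C_b(\Omega_T)$ ``from tightness of the candidate family'', asserting that every representing measure is the law of a continuous process with volatility confined to $[\underline{\sigma},\overline{\sigma}]$. At that stage nothing guarantees that a dominated functional $\ell$ is integration against the law of any process, let alone one with such volatility bounds --- that identification is essentially the content of the theorem being proved. The standard repair is the one the cited references use and that your closing paragraph sketches: first construct the concrete family $\mathcal{P}_M$ and verify that it represents $\BE$ on $L_{ip}(\Omega_T)$; its tightness then gives, for $\xi_n\in C_b(\Omega_T)$ with $\xi_n\downarrow 0$, the bound $\BE[\xi_n]\le \sup_{K}\xi_n+\|\xi_1\|_\infty\sup_{P}P(K^c)$ with $K$ compact, whence $\BE[\xi_n]\downarrow 0$ by Dini's theorem on $K$; only after that does the Daniell--Stone argument upgrade the Hahn--Banach functionals to countably additive measures. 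In short, the abstract route cannot stand alone; it must be bootstrapped by the explicit construction, which is why the references take the concrete route as primary, and your proposal should be reorganized accordingly.
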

Let $\mathcal{P}$ be a weakly compact set that represents $\BE$. For this $\mathcal{P}$, we define capacity
\begin{equation}\label{c}
c(A) = \sup_{P\in\mathcal{P}}P(A), A \in \mathcal{B}(\Omega_T).
\end{equation}
$c$ defined here is independent of the choice of $\mathcal{P}$.
\begin{rem}
\begin{itemize}\label{line}
  \item [(i)] Let $(\Omega^0, \mathcal{F}^0, P^0)$ be a probability space and $\{W_t\}$ be a $d$-dimensional
Brownian motion under $P^0$. Let  $F^0 =\{\mathcal{F}^0_t\}_{t\geq 0}$ be the augmented filtration generated by W. \cite{Denis} proved that
$$\mathcal{P}_M := \{P_h\ | \  P_h= P^0 \circ X^{-1}, X_t = \int_0^t h_s\d W_s, h_s\in L^2_{ F^0}([0,T];\Gamma^{\ff{1}{2}}) \}$$
is a set that represents $\BE$, where $\Gamma^{\ff{1}{2}} := \{\gamma^{\ff{1}{2}} \ | \ \gamma \in \Gamma \}, $ is the set in the representation
of $G(\cdot)$ in the formula \eqref{GA} and $L^2_{ F^0}([0,T];\Gamma^{\ff{1}{2}})$ is the set of $F^0$-progressive measurable
processes with values in $\Gamma^{\ff{1}{2}}$.
  \item [(ii)]  For the $1$-dimensional case, $L^2_{ F^0}([0,T];\Gamma^{\ff{1}{2}})$ reduces to the form below:
$$\{ h \ |  \ h \ is \ an \ progressive \ measurable  \ process \ w.r.t.\  F^0 \ and  \ \underline{\sigma} \leq  |h_s|  \leq \bar{\sigma}\}.$$
\end{itemize}

\end{rem}
\begin{defn}
We say a set $A\subset \Omega_T$ is $c$-polar if $c(A) = 0$. A property holds quasi-surely ($c$-q.s. for short) if it holds
outside a $c$-polar set.
\end{defn}

\begin{defn}
\begin{itemize}\label{qc}
  \item[(1)]  We say that a map $\xi(\cdot): \Omega_T   \rightarrow  \R $ is quasi-continuous if for all  $\epsilon > 0$, there
exists an open set $G$ with $c(G) < \epsilon $ such that $\xi(\cdot)$ is continuous on $G^c$.
  \item[(2)]   We say that a process $M_{\cdot}(\cdot) :  \Omega_T \times [0,T]  \rightarrow  \R$ is quasi-continuous if for all  $\epsilon > 0$, there
exists an open set $G$ with $c(G) < \epsilon $ such that $M_{\cdot}(\cdot)$ is continuous on $G^c\times [0, T].$
 \item[(3)] We say that a random variable $X : \Omega_T   \rightarrow  \R $ has a quasi-continuous version if there exists a quasi-continuous
function $Y : \Omega_T   \rightarrow  \R $ such that $X = Y$, $c$-q.s.

\end{itemize}
\end{defn}
\begin{rem}
Note that a quasi-continuous process defined here  is different from \cite{HWZ}.
\end{rem}
According to \cite{Denis},
$$
L_G^{p}(\Omega_T) =\{ X\in L^0(\Omega_T)\ |\lim_{N\to \infty}\bar{\E}[|X|^p1_{|X|\geq N}]=0  \ and \ X \ has \ a \  quasi-continuous \ version\},
$$
where $ L^0(\Omega_T)$ denotes  the space of all $ \mathcal{B}(\Omega_T)$-measurable real function.

In the paper, we discuss the property of distribution for the solution  $X_t$  in \eqref{0}, a polar set does not affect the result, so in the following parts, we did not distinguish the quasi-continuous version and  itself  any more.

\begin{thm} \label{Mono}(Monotone Convergence Theorem)
\cite[Theorem 10, Theorem 31]{Denis} Let $\mathcal{P}$ be weakly compact that represents $\BE$.
\begin{itemize}
  \item[(1)] Suppose $\{X_n\}_{n\geq1}, X \in L^0(\Omega_T),$ $X_n \uparrow X,$ $c$-q.s. and $\E_P [X_1^-] < \infty$ for all $P \in  \mathcal{P}$. Then $\BE[X_n] \uparrow\BE[X]$.

  \item[(1)]  Let $\{X_n\}_{n=1}^\infty \subset L_G^1(\Omega_T)$ be such that $X_n \downarrow X,$
$c$-q.s.. Then $\BE[X_n] \downarrow\BE[X]$.

\end{itemize}
\end{thm}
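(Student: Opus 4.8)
The plan is to derive both assertions from the robust representation \eqref{rep}, combined with the elementary observation that every $c$-polar set is $P$-null for each $P\in\mathcal P$, which is immediate from the definition \eqref{c} of the capacity. Consequently, a $c$-q.s.\ statement holds $P$-almost surely for every $P\in\mathcal P$ simultaneously; in particular $X_n\uparrow X$ (resp.\ $X_n\downarrow X$) $c$-q.s.\ implies the same monotone convergence $P$-a.s.\ for each $P$. The proof therefore reduces, under each fixed $P$, to the classical Lebesgue monotone convergence theorem, and the whole issue becomes the interchange of the limit in $n$ with the supremum over $\mathcal P$ produced by \eqref{rep}.

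For part (1), fix $P\in\mathcal P$: since $X_n\uparrow X$ $P$-a.s.\ and $\E_P[X_1^-]<\8$, the classical monotone convergence theorem gives $\E_P[X_n]\uparrow\E_P[X]$. Taking the supremum over $\mathcal P$ and using the trivial identity $\sup_n\sup_P=\sup_P\sup_n$, which is valid for any iterated supremum and needs no topology, I obtain
\[
\lim_{n\to\8}\BE[X_n]=\sup_{n}\sup_{P\in\mathcal P}\E_P[X_n]=\sup_{P\in\mathcal P}\sup_{n}\E_P[X_n]=\sup_{P\in\mathcal P}\E_P[X]=\BE[X],
\]
where the left-hand limit coincides with the supremum because $\BE[X_n]$ is nondecreasing. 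Thus part (1) is essentially formal and uses no compactness.

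Part (2) is the genuinely hard direction, because the required exchange is now of the form $\inf_n\sup_P$ against $\sup_P\inf_n$, and only the inequality $\inf_n\sup_P\ge\sup_P\inf_n$ is automatic. Monotonicity of $\BE$ already gives $\BE[X_n]\ge\BE[X]$ for all $n$, so it suffices to prove $\lim_n\BE[X_n]\le\BE[X]$, and it is exactly here that the weak compactness of $\mathcal P$ must be invoked. I would first settle the model situation of bounded continuous functionals $X_n\downarrow0$: for such $X_n$ each map $P\mapsto\E_P[X_n]$ is continuous for the weak topology on the \emph{compact} set $\mathcal P$ and decreases pointwise to $0$, so Dini's theorem upgrades this to uniform convergence, whence $\BE[X_n]=\sup_{P\in\mathcal P}\E_P[X_n]\downarrow0$.

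It then remains to pass from bounded continuous functionals to arbitrary $X_n\in L_G^1(\OO_T)$, and this approximation is where I expect the only real difficulty. Using the characterization of $L_G^1(\OO_T)$ recalled above — each element has a quasi-continuous version and satisfies $\lim_{N\to\8}\BE[|\cdot|\,1_{\{|\cdot|\ge N\}}]=0$ — one approximates the $X_n$ by bounded continuous functionals in the $L_G^1$-norm, controls the tails uniformly through the integrability, and transfers the Dini estimate back to the original sequence, taking care that the truncations and density approximations respect the monotone structure so that the uniform convergence survives the limit and yields $\BE[X_n]\downarrow\BE[X]$. The compactness-plus-Dini step is the conceptual core; the density and truncation bookkeeping is routine in spirit but must be arranged to preserve monotonicity.
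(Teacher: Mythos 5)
First, a remark on the comparison itself: the paper does not prove this theorem — it is quoted verbatim from Denis--Hu--Peng \cite[Theorem 10, Theorem 31]{Denis} — so your attempt can only be measured against the cited source's argument. Your part (1) is correct and is that standard argument: every $c$-polar set is $P$-null for each $P\in\mathcal{P}$, the classical monotone convergence theorem applies under each fixed $P$ (the hypothesis $\E_P[X_1^-]<\infty$ makes it legitimate), and the interchange $\sup_n\sup_P=\sup_P\sup_n$ is free; here $\BE$ must of course be read as the upper expectation $\sup_{P\in\mathcal{P}}\E_P$, since the variables are only assumed to lie in $L^0(\Omega_T)$.

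Part (2), however, has a genuine gap, and it sits exactly where you defer to ``routine bookkeeping.'' Your conceptual core, Dini's theorem, requires the pointwise limit of the decreasing continuous functions $\phi_n(P)=\E_P[X_n]$ to be \emph{continuous} on $\mathcal{P}$. That limit is $\phi(P)=\E_P[X]$ (classical monotone convergence under each $P$), and the paper's own remark stresses that $X$ need \emph{not} belong to $L_G^1(\Omega_T)$: in general $\phi$ is only upper semicontinuous, and no truncation, monotonization, or density scheme applied to the $X_n$ can repair this, because the failure lives in the limit, not in the approximands (already $\phi_n(x)=x^n$ on $[0,1]$ is continuous, decreasing, with u.s.c.\ non-continuous limit and non-uniform convergence). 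So the Dini estimate cannot be ``transferred back.'' What you actually need is weaker than uniform convergence, namely $\inf_n\sup_P\phi_n=\sup_P\inf_n\phi_n$, and for a decreasing sequence of continuous (or u.s.c.) functions on a compact space this follows from the finite intersection property applied to the nested nonempty closed sets $K_n=\{P\in\mathcal{P}:\phi_n(P)\geq\inf_m\sup_{\mathcal{P}}\phi_m\}$: any $P^*\in\bigcap_n K_n$ satisfies $\E_{P^*}[X]=\lim_n\phi_n(P^*)\geq\lim_n\BE[X_n]$. This is equivalent to extracting a weak cluster point of near-maximizing measures $P_n$, which is precisely what Denis--Hu--Peng do. A second, fixable, inefficiency: you assert weak continuity of $P\mapsto\E_P[Y]$ only for bounded continuous $Y$, which is what forces your approximation detour. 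In fact this map is already weakly continuous for \emph{every} $X\in L_G^1(\Omega_T)$, because by norm-density of $L_{ip}(\Omega_T)$ there are $Y_k\in L_{ip}(\Omega_T)$ with $\sup_{P\in\mathcal{P}}|\E_P[X]-\E_P[Y_k]|\leq\BE[|X-Y_k|]\to0$, and a uniform limit of continuous functions is continuous. With that observation plus the finite-intersection argument, part (2) is a three-line proof and no manipulation of the sequence $\{X_n\}$ is needed at all.
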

\begin{rem}
We stress that in this theorem $X$ does not necessarily belong to $L_G^1(\Omega_T)$.
\end{rem}
Let
\begin{equation*}\label{equa11} M_G^{p,0}([0,T])=
\Bigg\{\eta_t~ | ~\eta_t=\sum_{j=0}^{N-1} \xi_{j} I_{[t_j, t_{j+1})},\xi_{j}\in L_{G}^p(\Omega_{t_{j}}),
 N\in\mathbb{N},\ 0=t_0<t_1<\cdots <t_N=T \Bigg\}.
\end{equation*}
For  $p\geq1$, let   $M_G^{p}([0,T])$  and $H_G^{p}([0,T])$ be the completion of $M_G^{p,0}([0,T])$ under the following norm
 $$\|\eta\|_{M_G^{p}([0,T])}=\left[\bar{\mathbb{E}}\left(\int_{0}^{T}|\eta_{t}|^{p}dt\right)\right]^{\frac{1}{p}},  \
 \|\eta\|_{H_G^{p}([0,T])}=\left[\bar{\mathbb{E}}\left(\int_{0}^{T}|\eta_{t}|^{2}dt\right)^{\ff{p}{2}}\right]
 ^{\frac{1}{p}},$$ respectively.
%According to  \cite[Corollary 4.9]{HWZ}, for any $\eta\in M_G^1(0,T)$ and $f\in C_b([0,T]\times \R)$,   $f(t,\eta_t)_{t\leq T} \in M_G^p(0,T). $
 Denote by $[M_G^p([0,T])]^d$, $[H_G^p([0,T])]^d$   all $d$-dimensional stochastic processes $\eta_t=(\eta_t^1, \cdot \cdot \cdot, \eta_t^d),$ $\xi_t=(\xi_t^1, \cdot \cdot \cdot, \xi_t^d)$, $t\geq0$
 with $\eta_t^i\in M_G^p([0,T]), \xi_t^i\in H_G^p([0,T])$, respectively.
\begin{defn}
A process $X = \{X_t | t\in[0,T]\}$ is called a $G$-martingale if for each
$t\in[0,T]$, we have $X_t \in  L^1_G(\Omega_t) $ and
$$ \BE_s[X_t] = X_s \ in \  t\in[0,T].$$
We call $X$ a symmetric $G$-martingale if both $X$ and $-X$ are $G$-martingales.
\end{defn}
\begin{rem} For  $\eta\in M_G^1([0,T])$,  it's easy to see that the process $\int_0^t \eta_s(\omega) \d s$  has a $c$-quasi continuous version. Also, \cite{song2} shows that any G-martingale has a c-quasi continuous version.
\end{rem}
%\cite{HWZ}  proved that
%\begin{eqnarray*}
%M_G^{p}([0,T])&=&\Big\{ \eta\in \mathbb{M}^p([0,T]):\lim_{N\to \infty}\bar{\E}[\int_0^T|\eta_t|^p1_{|\eta_t|\geq N}\d t]=0  \ and
% \\
%&&\quad \eta \ has \  a \ quasi-continuous \ version\Big\},
%\end{eqnarray*}
%where $\mathbb{M}^p([0,T])=\{\eta:  progressively \ measurable \ on \  [0, T ] \times  \Omega_T \  and \ \bar{\E}[\int_0^T|\eta_t|^p %\d t]<\infty\}.$
Let $B_t$  be a $d$-dimensional $G$-Brownian motion, then $G(A)=\frac{1}{2}\bar{\mathbb{E}}[\langle AB_1,B_1\rangle]$,  $A\in \mathbb{S}^d$.
In particular, for 1-dimensional $G$-Brownian motion $(B_t)_{t\ge0}$, one has
$G(a)=(\overline{\sigma}^2a^+-\underline{\sigma}^2a^-)/2, a\in \mathbb{R},  $ where $\overline{\sigma}^2: = \bar{\mathbb{E}} [B^2_1]\geq -\bar{\mathbb{E}} [-B^2_1 ]= :\underline{\sigma}^2>0$.

Let $\langle B\rangle_t=(\langle B^{i},B^{j}\rangle_t)_{1\leq i,j\leq d}, 0\leq t\leq T$, which is defined by
\begin{equation}\label{B}
   \langle B\rangle_t=B_t^iB_t^j-\int_0^tB_s^i\d B_s^j-\int_0^tB_s^j\d B_s^i.
\end{equation}
To establish the Wang's  Harnack inequality,   $G$-Girsanov's transform plays a crucial role,
 the following  results is taken from \cite{Osuka,Xu}.
 %, which will be used frequently in the following states. Asymptotic Spectral Formula for Empirical Measures of Diffusion Processes on Riemannian  Manifolds
 For $ \eta \in [M^2_G ([0, T])]^d$, let
\begin{align}\label{Gir}
&M_t=\exp \left\{  \int_0^t \langle \eta_s,   {\d}B _s\rangle -\frac{1}{2}\int_0^t \langle \eta_s,  ({ \d}\langle B \rangle_s\eta_s)\rangle\right \}, \nonumber\\
 &\hat{B} _t=B_t-\int_0^{t}({\d}\langle B \rangle _s \eta_s),  \ t\in[0,T],
\end{align}
where $({\d}\langle B \rangle _s \eta_s)=\left(\sum_{j=1}^d\eta_s^j \d \langle B^i,B^j\rangle_s\right)_{1\leq i\leq d}.$

\begin{lem} \label {lema2.10} {\rm (\cite{Osuka,Xu})}
 If $ \eta \in [M^2_G ([0, T])]^d$ satisfies $G$-Novikov's condition, i.e., for some $\epsilon_0 > 0$, it holds that
 \begin{equation}\label{Novikov}
 \bar{\E}\left[ \exp \left\{ \left( \frac{1}{2}+\epsilon_0 \right )\int_0^T\langle \eta_s, ({\d}\langle B \rangle_s\eta_s)\rangle\right \}  \right]<\infty,
\end{equation}
then the process $M$ is a symmetric $G$-martingale.
\end{lem}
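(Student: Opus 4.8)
The plan is to combine the representation $\BE[\cdot]=\sup_{P\in\mathcal P}\E_P[\cdot]$ of \eqref{rep} with the classical Girsanov--Novikov theorem applied separately under each $P\in\mathcal P$, and then to lift the resulting family of ordinary martingale identities back to the level of the nonlinear conditional expectation $\BE_s$. First I would pass to the representing family $\mathcal P_M$ of Remark \ref{line}(i): under any $P_h\in\mathcal P_M$ the canonical process is $B_t=\int_0^th_s\,\d W_s$ with $h_s\in\GG^{1/2}$, so that $\d\<B\>_s=h_sh_s^{*}\,\d s$ and, $P_h$-almost surely,
\[
M_t=\exp\left\{\int_0^t\<h_s^{*}\eta_s,\d W_s\>-\ff12\int_0^t|h_s^{*}\eta_s|^2\,\d s\right\}.
\]
Under $P_h$ this is exactly the Dol\'eans--Dade exponential of the continuous $P^0$-local martingale $N_t^h:=\int_0^t\<h_s^{*}\eta_s,\d W_s\>$, whose quadratic variation equals $\int_0^t|h_s^{*}\eta_s|^2\,\d s=\int_0^t\<\eta_s,(\d\<B\>_s\eta_s)\>$.

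Next I would read off the classical Novikov criterion for every member of the family. Because $\BE[\cdot]=\sup_{P}\E_P[\cdot]$, the $G$-Novikov hypothesis \eqref{Novikov} gives, for each fixed $P\in\mathcal P$,
\[
\E_P\left[\exp\left\{\left(\ff12+\epsilon_0\right)\int_0^T\<\eta_s,(\d\<B\>_s\eta_s)\>\right\}\right]<\infty,
\]
which dominates the bare criterion with exponent $\ff12$ since the integrand is nonnegative. Hence under each $P$ the process $M$ is a genuine, uniformly integrable $P$-martingale on $[0,T]$, so that $\E_P[M_t\,|\,\F_s]=M_s$ for all $0\le s\le t\le T$. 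The spare factor $\epsilon_0$ is what I would use to turn this into uniform control: it yields a bound on $\sup_{P\in\mathcal P}\E_P[M_t^{\,p}]$ for some $p>1$, which together with the quasi-continuity of the stochastic exponential gives the integrability $M_t\in L^1_G(\OO_t)$ demanded by the definition of a $G$-martingale.

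Finally I would lift the martingale property from $\mathcal P$ to $\BE$. Fixing $0\le s\le t\le T$ and using the representation of the conditional $G$-expectation as an essential supremum of ordinary conditional expectations over the measures of $\mathcal P$ that coincide with a given $P$ up to time $s$, the identity $\E_{P'}[M_t\,|\,\F_s]=M_s$ valid for every such $P'$ forces $\BE_s[M_t]=M_s$, because $M_s$ is $\F_s$-measurable and common to all these $P'$. The same argument applied to $-M_t$ gives $\BE_s[-M_t]=-M_s$, so both $M$ and $-M$ are $G$-martingales and $M$ is symmetric.

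The main obstacle is precisely this last lifting step, where the sublinearity of $\BE$ enters. Unlike the linear case, knowing that $M$ is a martingale under each $P$ separately does not immediately produce a $G$-martingale: one must aggregate the $P$-wise identities into a single statement about $\BE_s$, and this requires both the essential-supremum characterization of $\BE_s$ and the uniform integrability of the family $\{M_t\}$ across all of $\mathcal P$. It is to secure this uniformity — and thereby membership in $L^1_G$ and the symmetry of the resulting $G$-martingale — that the strengthened exponent $\ff12+\epsilon_0$ in \eqref{Novikov}, rather than the classical $\ff12$, is needed.
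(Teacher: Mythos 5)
The paper itself offers no proof of this lemma: it is quoted verbatim from the cited sources \cite{Osuka,Xu}, so there is no internal argument to compare against. Judged on its own merits, your proposal is essentially a correct reconstruction of the standard proof (it is, in structure, the argument of \cite{Osuka}): classical Novikov under each $P\in\mathcal{P}$ (since $\E_P\le\BE$, the $G$-Novikov bound dominates the $P$-wise one), a uniform $L^{p}$ bound for some $p>1$ extracted from the spare $\epsilon_0$ (the usual H\"older splitting of $\exp\{pN_T-\frac p2\langle N\rangle_T\}$ against the supermartingale $\exp\{pqN_T-\frac{(pq)^2}{2}\langle N\rangle_T\}$ works uniformly in $P$ because the Novikov bound is uniform), and then the lifting of the $P$-wise martingale identities to $\BE_s$ via the essential-supremum representation of the conditional $G$-expectation over the pasting-stable subfamily $\{P'\in\mathcal{P}: P'=P \hbox{ on } \F_s\}$. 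The symmetry also comes out exactly as you say, since every $\E_{P'}[\pm M_t\,|\,\F_s]$ equals $\pm M_s$.

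Two points in your sketch deserve to be made explicit rather than asserted. First, membership $M_t\in L^1_G(\Omega_t)$ requires not only the uniform integrability $\lim_{N\to\infty}\BE[M_t 1_{\{M_t\ge N\}}]=0$ (which your $L^p$ bound gives) but also a quasi-continuous version; this is not automatic for a ``stochastic exponential'' and should be justified, e.g.\ by noting that $\int_0^t\langle\eta_s,\d B_s\rangle$ and $\int_0^t\langle\eta_s,(\d\langle B\rangle_s\eta_s)\rangle$ are limits in $L^2_G$, resp.\ $L^1_G$, of integrals of step processes and hence have quasi-continuous versions, and that $\exp$ of a quasi-continuous variable is quasi-continuous (alternatively, approximate $\eta$ by bounded step integrands and pass to the limit using your uniform integrability). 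Second, the essential-supremum characterization of $\BE_s$ is itself a nontrivial theorem (Soner--Touzi--Zhang, Hu--Peng), valid precisely for $L^1_G$ random variables and for a weakly compact, pasting-stable representing family; so the logical order matters: you must establish $M_t\in L^1_G(\Omega_t)$ \emph{before} the lifting step, not alongside it. With these two points filled in, the argument is complete. For contrast, the other cited source \cite{Xu} proceeds differently: by $G$-It\^o's formula one has $\d M_t=M_t\langle\eta_t,\d B_t\rangle$, and once the Novikov-type integrability shows $M\eta\in [M^2_G([0,T])]^d$, the process $M$ is a $G$-It\^o integral and hence automatically a symmetric $G$-martingale, bypassing the measure-by-measure analysis entirely.
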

%\begin{lem}(\cite[Proposition 5.2]{Denis}
% $X = \{X_t | t\in[0,T]\}$ is a symmetric $G$-martingale on $(\Omega_T, L^1_G(\Omega_T), \BE)$ if and
%only if $X_t \in L^1_G(\Omega_t)$ for all $t \in [0, T]$ and $X$ is a $P_h$-martingale for each $h\in L^2_{ F^0}([0,T];\Gamma^{\ff{1}{2}})$.
%\end{lem}
%We propose  Girsanov's formula for $G$-Brownian motion  as follows.
\begin{lem}  \label{lemma3}{\rm(\cite{Osuka}) ($G$-Girsanov's formula)}
Assume that there exists $\sigma_0 > 0$ such that
 $$
 \gamma  \geq \sigma_0I_d \  \ \  for \  \  \ all  \ \ \gamma \in \Gamma,
 $$
 and that $M$ is a symmetric $G$-martingale on $(\Omega_T, L^1_G (\Omega_T), \bar{\E}).$ Define a sublinear expectation $\hat{\E}$ by
 $$\hat{\E}[X]=\bar{\E}[XM_T], \ \ \ X \in  \hat{L}_{ip}(\Omega_T),$$
 where
  $\hat{L}_{ip}(\Omega_T) :=\{ \varphi(\hat{B}_{t_1}, \cdot \cdot \cdot, \hat{B}_{t_n}):n\in \mathbb{N}, t_1,\cdot \cdot \cdot, t_n\in [0,T],\varphi \in C_{b,lip}(\R^{d}\otimes\R^n)\}$.
 Then $\hat{B}_t$  is a $G$-Brownian motion on the sublinear expectation space $(\Omega_T,  \hat{L}_G^1(\Omega_T), \hat{\E})$, where   $\hat{L}_G^1(\Omega_T)$ is the completion of
  $\hat{L}_{ip}(\Omega_T)$ under the norm $\hat{\E}[|\cdot|]$.
  \end{lem}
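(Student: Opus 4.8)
The plan is to verify directly the defining properties of a $G$-Brownian motion for $\hat B$ under the new sublinear expectation $\hat\E$, using the representation of $\BE$ by $\mathcal P_M$ from Remark~\ref{line} to reduce everything to the classical Girsanov theorem applied measure by measure. I would first check that $\hat\E[X]=\BE[XM_T]$ is a genuine sublinear expectation on $\hat L_{ip}(\Omega_T)$. Since $M$ is a symmetric $G$-martingale one has $M_T\ge 0$ $c$-q.s., $\BE[M_T]=M_0=1$ and $\BE[-M_T]=-1$; monotonicity, sub-additivity and positive homogeneity of $\hat\E$ are then inherited from $\BE$ because $M_T\ge 0$, while the two-sided normalisation of $M_T$ yields $\hat\E[c]=c$ for every constant $c$. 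This also lets $\hat\E$ extend by continuity to the completion $\hat L^1_G(\Omega_T)$.

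Next, fix $P=P_h\in\mathcal P_M$, so that under the reference measure $P^0$ one has $B_t=\int_0^t h_s\,\d W_s$ with $h\in L^2_{F^0}([0,T];\Gamma^{\ff12})$ and $W$ a standard Brownian motion; since $\Gamma^{\ff12}\subset\mathbb S^d_+$, each $h_s$ is symmetric, whence $\langle B\rangle_t=\int_0^t h_s^2\,\d s$ and
\begin{equation*}
\hat B_t=\int_0^t h_s\big(\d W_s-h_s\eta_s\,\d s\big),\qquad M_t=\exp\Big\{\int_0^t\langle h_s\eta_s,\d W_s\rangle-\ff12\int_0^t|h_s\eta_s|^2\,\d s\Big\}.
\end{equation*}
Thus $M_t$ is exactly the classical stochastic exponential associated with the integrand $h\eta$. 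The uniform ellipticity $\gamma\ge\sigma_0 I_d$ together with the $G$-Novikov condition \eqref{Novikov} guarantees the requisite integrability, so the classical Girsanov theorem applies: under $\d Q_P:=M_T\,\d P^0$ the process $\hat W_t:=W_t-\int_0^t h_s\eta_s\,\d s$ is a Brownian motion, and $\hat B_t=\int_0^t h_s\,\d\hat W_s$ under $Q_P$.

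I would then transfer this to the sublinear level. For $\varphi\in C_{b,lip}(\R^{d}\otimes\R^n)$ and $0\le t_1<\cdots<t_n\le T$,
\begin{equation*}
\hat\E\big[\varphi(\hat B_{t_1},\dots,\hat B_{t_n})\big]=\sup_{P\in\mathcal P_M}\E_P\big[M_T\,\varphi(\hat B_{t_1},\dots,\hat B_{t_n})\big]=\sup_{P\in\mathcal P_M}\E_{Q_P}\big[\varphi(\hat B_{t_1},\dots,\hat B_{t_n})\big].
\end{equation*}
Under each $Q_P$ the vector $(\hat B_{t_1},\dots,\hat B_{t_n})$ has the law of $\big(\int_0^{t_i}h\,\d\hat W\big)_{i}$ with $\hat W$ a $Q_P$-Brownian motion and $h$ still valued in $\Gamma^{\ff12}$, which is precisely the form of a law in the representing family of a $G$-Brownian motion. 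Hence the supremum coincides with $\BE[\varphi(B_{t_1},\dots,B_{t_n})]$, the $G$-expectation of the same functional of the $G$-Brownian motion $B$. Applied to increments this simultaneously delivers the correct $G$-normal law of the increments of $\hat B$ and their independence under $\hat\E$, which together with $\hat B_0=0$ establishes that $\hat B$ is a $G$-Brownian motion on $(\Omega_T,\hat L^1_G(\Omega_T),\hat\E)$.

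The main obstacle is the last identification of suprema, for which both inclusions are needed. That the family $\{Q_P\}$ stays inside the representing class of a $G$-Brownian motion (giving $\le\BE$) is immediate, since the change of measure leaves the diffusion coefficient $h\in\Gamma^{\ff12}$ untouched; the reverse inequality (giving $\ge\BE$) requires that $P\mapsto Q_P$ be onto the relevant representing set, and here I would use the invertibility of $h$ --- guaranteed by $\gamma\ge\sigma_0 I_d$ --- to reconstruct $W=\int h^{-1}\,\d B$ and thereby invert the transform. The weak compactness of $\mathcal P$ and the $c$-quasi-sure calculus must also be invoked to justify exchanging $\BE[M_T\,\cdot]$ with $\sup_{P}\E_P[M_T\,\cdot]$ and to handle null sets; this is exactly where the hypotheses (symmetric $G$-martingale, uniform ellipticity, and the $G$-Novikov condition) genuinely enter.
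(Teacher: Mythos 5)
First, a point of comparison: the paper itself does not prove this lemma at all --- it is imported verbatim from \cite{Osuka} (see also \cite{Xu} and \cite[Theorem 5.2]{HJPS}), so there is no internal proof to measure your argument against; what follows is a review of your proposal on its own merits. The sound parts are the verification that $\hat{\E}$ is a sublinear expectation with $\hat{\E}[c]=c$ (which genuinely needs both $\BE[M_T]=1$ and $\BE[-M_T]=-1$, i.e.\ the symmetry of $M$) and the measure-by-measure reduction to the classical Girsanov theorem. One repairable slip: you invoke the $G$-Novikov condition \eqref{Novikov}, which is \emph{not} a hypothesis of this lemma --- it is the hypothesis of Lemma \ref{lema2.10}, whose \emph{conclusion} (that $M$ is a symmetric $G$-martingale) is what is assumed here. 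The fix is standard: under each $P=P_h\in\mathcal{P}_M$ the process $M$ is a nonnegative $P$-local martingale (a stochastic exponential with $\int_0^T|h_s\eta_s|^2\d s<\infty$ $P$-a.s.), and symmetry forces $\E_P[M_T]=1=M_0$, so $M$ is a true $P$-martingale, which is all that classical Girsanov requires.

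The genuine gap is the reverse inequality in your key identification $\sup_{P}\E_{Q_P}[\varphi(\hat B_{t_1},\dots,\hat B_{t_n})]=\BE[\varphi(B_{t_1},\dots,B_{t_n})]$, and your proposed remedy does not close it. Reconstructing $W=\int h^{-1}\,\d B$ recovers the driving Brownian motion of $B$ under $P$, but surjectivity requires something else: given an arbitrary target law $P_{\tilde h}$ in the representing set, you must produce $P$ with $Q_P\circ\hat B^{-1}=P_{\tilde h}$. Since the kernel $\eta$ in \eqref{Gir} is a functional of the \emph{original} canonical path $B$, not of $\hat B$, this amounts to solving the implicit path equation $B_t=\hat B_t+\int_0^t\d\langle\hat B\rangle_s\,\eta_s(B)$ with $\hat B$ distributed as $P_{\tilde h}$; for a general $\eta\in[M_G^2([0,T])]^d$ there is no existence or uniqueness theory for such an equation, and ellipticity of $h$ alone does not supply one. (Your ``easy'' direction also hides a filtration issue: under $Q_P$ the integrand $h$ is progressive for $F^0$ but not necessarily for the filtration generated by $\hat W$, so one needs the weak-formulation/domination form of the representation of $\BE$ rather than literal membership in $\mathcal{P}_M$; this part is fixable by known results.) This surjectivity obstruction is precisely why the proofs in the literature do not attempt to identify the image of the representing set: \cite{Osuka} and \cite{Xu} instead verify that $\hat B$ is a symmetric martingale under $\hat{\E}$ whose quadratic variation satisfies $\langle\hat B\rangle=\langle B\rangle$ (compare Lemma \ref{lem2.1}) and then conclude by a L\'evy-type characterization of $G$-Brownian motion, or equivalently by a PDE argument identifying the cylinder expectations with solutions of the $G$-heat equation \eqref{Gheat}. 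Since your route targets all cylinder functionals at once --- which is indeed what the definition of $G$-Brownian motion requires --- the missing surjectivity step is unavoidable in your approach, and the proposal is incomplete as it stands.
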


\begin{rem} The Girsanov theorem also  appeared in \cite[Theorem 5.2]{HJPS}.
\end{rem}
%By $G$-Girsanov's formula,  we immediately have following Lemma    which will be used in the proof of main results.
\begin{lem} \label{lem2.1}
For $\hat{B}$ in \eqref{Gir},  then $c$-q.s., $\<\hat{B}\>_t= \< {B}\>_t,$  $t\in[0,T].$
\end{lem}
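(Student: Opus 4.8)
The plan is to exploit that, by the $G$-Girsanov transform \eqref{Gir}, the processes $\hat B$ and $B$ differ only by the continuous finite-variation term
$$A_t := \int_0^t (\d\<B\>_s\,\eta_s), \qquad A^i_t = \int_0^t\sum_{j=1}^d \eta_s^j\,\d\<B^i,B^j\>_s,$$
so that $\hat B_t = B_t - A_t$. Since the mutual variation is insensitive to the addition of a continuous bounded-variation process, the identity $\<\hat B\>_t=\<B\>_t$ should follow. First I would record two preliminary facts. \emph{(a)} $A$ is, quasi-surely, continuous and of bounded variation on $[0,T]$: by polarization each $\<B^i,B^j\>$ is a difference of the increasing processes $\<B^i\pm B^j\>$ and hence of bounded variation, and for $\eta\in[M_G^2([0,T])]^d$ the Stieltjes integral $A^i$ is well defined, with total variation bounded q.s. by $\int_0^T|\eta_s|\,|\d\<B^i,B^j\>_s|<\infty$. \emph{(b)} The notion of ``quasi-sure'' attached to $\hat{\E}$ coincides with the one attached to $\BE$: by Lemma \ref{lemma3}, $\hat{\E}[\,\cdot\,]=\BE[\,\cdot\,M_T]$ with $M_T>0$ q.s. and $\BE[M_T]=1$, so for $A\in\mathcal B(\Omega_T)$ one has $c(A)=0$ iff $\sup_{P\in\mathcal P}\E_P[1_A M_T]=0$ iff $\hat c(A)=0$. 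Hence it is legitimate to phrase the conclusion with the single capacity $c$.

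With these in hand, I would run the defining approximation of the mutual variation. For a partition $\pi=\{0=t_0<\cdots<t_N=t\}$ of $[0,t]$, by the characterization of the mutual variation of the $G$-Brownian motion $\hat B$ under $\hat{\E}$,
$$\<\hat B^i,\hat B^j\>_t=\lim_{\mu(\pi)\to 0}\sum_k \big(\hat B^i_{t_{k+1}}-\hat B^i_{t_k}\big)\big(\hat B^j_{t_{k+1}}-\hat B^j_{t_k}\big),$$
the limit holding q.s. Writing $\hat B=B-A$ and expanding the product splits the right-hand side into $\sum_k\Delta B^i_k\Delta B^j_k$, which converges q.s. to $\<B^i,B^j\>_t$, plus three remainder sums that are all controlled by the bounded variation of $A$: the pure term obeys $|\sum_k\Delta A^i_k\Delta A^j_k|\le(\max_k|\Delta A^j_k|)\,V_0^t(A^i)$, and each mixed term obeys, by Cauchy--Schwarz, $|\sum_k\Delta B^i_k\Delta A^j_k|\le(\sum_k(\Delta B^i_k)^2)^{1/2}(\max_k|\Delta A^j_k|\,V_0^t(A^j))^{1/2}$; since $A$ is continuous with finite total variation, $\max_k|\Delta A_k|\to0$ while the quadratic sum of $B$ stays bounded, so all three remainders vanish q.s. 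Comparing limits yields $\<\hat B^i,\hat B^j\>_t=\<B^i,B^j\>_t$ for each fixed $t$, $c$-q.s.; since both sides are continuous in $t$ and $c$ is countably subadditive, the exceptional sets over rational $t$ union to a $c$-polar set, so the identity holds simultaneously for all $t\in[0,T]$.

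The main obstacle I expect is not the algebra but the measure-theoretic bookkeeping that legitimizes comparing, on the common canonical space $\Omega_T$, the q.s. limit defining $\<\hat B\>$ (a limit in the $\hat{\E}$-theory) with the q.s. limit defining $\<B\>$ (a limit in the $\BE$-theory); this is exactly what step (b) secures, and it deserves to be stated carefully that $M_T>0$ is what forces the two capacities to share the same polar sets. A cleaner, equivalent route that sidesteps the approximation entirely is to argue through the representing families: under each $P\in\mathcal P$ the $G$-mutual variation $\<B\>$ agrees a.s. with the classical quadratic covariation of the continuous semimartingale $B$, subtraction of the continuous bounded-variation process $A$ leaves the classical covariation unchanged, and the invariance of quadratic variation under the equivalent change of measure $\hat P=M_T\,P\sim P$ identifies it with $\<\hat B\>$; taking the supremum over $P\in\mathcal P$ then gives the claim $c$-q.s. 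Either way, the only delicate point is the continuity and finite-variation control of $A$, which is precisely where the hypothesis $\eta\in[M_G^2([0,T])]^d$ enters.
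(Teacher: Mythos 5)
Your proposal is correct, but your \emph{primary} route is not the paper's; in fact the ``cleaner, equivalent route'' you sketch in your last paragraph is essentially the paper's entire proof. The paper argues in two lines: for each fixed $P\in\mathcal{P}$, under $P$ the process $\hat{B}=B-\int_0^{\cdot}(\d\<B\>_s\,\eta_s)$ differs from $B$ by a continuous finite-variation process, so the classical quadratic covariations agree $P$-a.s., i.e. $P\{\<\hat{B}\>_t\neq\<B\>_t,\ t\in[0,T]\}=0$; then, by the definition \eqref{c} of the capacity as $\sup_{P\in\mathcal{P}}P$, the exceptional set is $c$-polar. Your main argument instead redoes the covariation identity pathwise: expand the partition sums for $\hat{B}=B-A$, kill the mixed and pure $A$-terms using continuity and bounded variation of $A$, and patch the fixed-$t$ statements together by countable subadditivity of $c$. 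That route is more self-contained and makes explicit where $\eta\in[M_G^2([0,T])]^d$ enters, but it costs you two pieces of bookkeeping the paper never needs: (i) quasi-sure (rather than in-capacity) convergence of the approximating sums, which requires some care in the $G$-framework, and (ii) your step (b) identifying the polar sets of $\hat{\E}$ and $\BE$. Point (ii) is also where your framing slightly overreaches the hypotheses: you read $\<\hat{B}\>$ as the mutual variation of a $G$-Brownian motion under $\hat{\E}$, which presupposes the conclusion of Lemma \ref{lemma3} (hence that $M$ is a symmetric $G$-martingale), an assumption Lemma \ref{lem2.1} does not make. The per-measure argument (the paper's, and yours at the end) avoids this entirely, since under each $P$ the covariation is the classical pathwise object, unchanged both by subtracting $A$ and by equivalent changes of measure, and no second capacity ever has to be introduced.
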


\begin{proof}
For any $P\in \mathcal{P}$, it holds that
$$P\{\<\hat{B}\>_t\neq \< {B}\>_t, \ t\in[0,T]\}=0.$$
 By \eqref{c}, we have
 $$c\{\<\hat{B}\>_t\neq \< {B}\>_t, \ t\in[0,T]\}=\sup_{P\in \mathcal{P}}P\{\<\hat{B}\>_t\neq \< {B}\>_t, \ t\in[0,T]\}=0,$$
 which  implies  $c$-q.s., $\<\hat{B}\>_t= \< {B}\>_t,$
 $t\in[0,T].$
%Applying It\^{o}'s formula  to $ \hat{B}_t^i \hat{B}_t^j  $, we derive that
%\begin{equation}\label{lp1}
% \hat{B}_t^i \hat{B}_t^j = \int_0^t\hat{B}_s^i\d \hat{B}_s^i+\int_0^t\hat{B}_s^j\d \hat{B}_s^i+\<B\>_t.
%\end{equation}
%By the definition of $\<\hat{B}\>_t$ in \eqref{B}, we have
%\begin{equation}\label{lp2}
%  \<\hat{B}\>_t= \hat{B}_t^i \hat{B}_t^j- \int_0^t\hat{B}_s^i\d \hat{B}_s^j- \int_0^t\hat{B}_s^j\d \hat{B}_s^i.
%\end{equation}
%Combining \eqref{lp1} with \eqref{lp2},  we conclude that  $\<\hat{B}\>_t= \< {B}\>_t.$
\end{proof}
%In the sequel, we  introduced  the notion of  stopping times under $G$-expectation framework.
%\begin{def}\rm \label{definition 2.8}
%Let $\mathcal{F}_{t}:=\mathcal{B}(\Omega)$, a stopping time $\tau$ relative to the filtration $(\mathcal{F}_{t})$ is a map on $\Omega$ with values in $[0,T]$, such that
% $$\{\tau\leq t\}\in\mathcal{F}_{t},\ t\leq T.$$
%\end{def}
%\begin{lem}\label{lemma1}{\rm{(\cite{Li})}}
%For each stopping time $\tau$ and  $\xi\in M_{G}^{p}([0,T])$, we have $$I_{[0,\tau]}(\cdot)\xi\in  M_{G}^{p}([0,T]),$$
%and
%\begin{equation*}
%\int_{0}^{t\wedge\tau}\xi_{s}dB_{s}
%=\int_{0}^{t}\xi_{s}I_{[0,\tau]}(s)dB_{s}.
%\end{equation*}
%\end{lem}
%\begin{lem} \label{Young1} {\rm(Young Inequality)} For a general sublinear expectation space  $(\Omega, \mathcal{H}, \bar{\E})$, where is a vector lattice of real valued functions defined on $\Omega$, for any constant in
%$\mathcal{H}$,  $|X|\in \mathcal{H},$ if $X\in \mathcal{H}$. Assume  $g_1$, $g_2>0$ with    $\bar{\E}[g_1]=1$. Then
%\begin{equation*}
%  \bar{\E}[g_1 g_2]\leq \bar{\E}[g_1 \log g_1]+\log \bar{\E}[\e ^{g_2}].
%\end{equation*}
%\end{lem}
%\begin{proof}
%\end{proof}
%For a family of probability measures $\{\mu_{x,\theta}:x\in \mathbb{R}^d, \theta \in \Theta\}$  on $(\mathbb{R}^d,\mathcal{B}(\mathbb{R}^d))$,  define
%\begin{equation*}
%\bar{P}f(x)=\sup_{\theta \in \Theta}{P}_\theta f(x)=\sup_{\theta \in \Theta}\int_{\mathbb{R}^d} f(y)\mu_{x,\theta}(dy), \ f\in \mathcal{B}_b(\mathbb{R}^d),
%\end{equation*}
%where ${P}_\theta $ is a linear Markov operator.
 We aim to establish the following  Harnack-type inequality introduced by Feng-Yu Wang:
\begin{equation}\label{w}
\Phi(\bar{P}f(x))) \leq\bar{P}\Phi(f(y)) e^{\Psi(x,y)}, \  x,y \in \mathbb{R}^d, f\in \mathcal{B}_b^+(\mathbb{R}^d),
\end{equation}
where $\Phi$ is a  nonnegative convex function on $[0,\infty)$ and $\Psi$ is a  nonnegative  function on $\mathbb{R}^d\times \mathbb{R}^d$.
In the   setting of  $G$-SDEs, we establish this type inequality for the associated nonlinear Markov operator $\bar{P}_T$. For simplicity, we consider the case of $d=1$, %1-dimensional $G$-Brownian motion case,
but our results and methods still hold for the case $d > 1$.
%We consider the following $G$-SDE
%\begin{equation}\label{00}
%dX_t=\sum_{j=1}^mb^j(t, X_t)dB_t^j +\sum_{i,j=1}^m\sigma^{ij}(t, X_t)d\langle B^i, B^j\rangle_t,
%\end{equation}
%where $B_\cdot$ is a $G$-Brownian motion,   $(\langle B \rangle_\cdot)_{ij}=\langle B^i, B^j\rangle_\cdot$ is the quadratic variation process
%associated with $B_\cdot$.
 %More precisely,
%consider the following $G$-SDE
%\begin{equation}\label{0}
%dX_t=b(t, X_t) \d t+\sum_{i,j=1}^dh_{ij}(t,X_t)\d \langle B \rangle_t+\sum_{i=1}^d\sigma_i(t, X_t) \d B^i_t,
%\end{equation}
% where $B_\cdot$ is a $G$-Brownian motion,   $\langle B \rangle_\cdot$ is the quadratic variation process
%associated with $B_\cdot$ for  $\overline{\sigma}^2 = \bar{\mathbb{E}} [B^2_1 ] \geq -\bar{\mathbb{E}} [-B^2_1 ] = \underline{\sigma}^2 > 0$, and $b:[0,\infty]\times\mathbb{R} \to \mathbb{R}$,
 %$\sigma:[0,\infty]\times\mathbb{R} \to \mathbb{R}$ satisfy
 To get our desired results, we give following assumptions on $b, \sigma,$ and $h$ in \eqref{0}.
\begin{itemize}
  \item [(H1)] There exists a constant $K>0$, such that
  $$|b(t,x)-b(t,y)|+|h(t,x)-h(t,y)|+|\sigma(t,x)-\sigma(t,y)|\leq K|x-y|,   \ x, y \in \mathbb{R}, t>0.$$
\item [(H2)] There exist  $\kappa_1, \kappa_2$ with $\kappa_2\geq \kappa_1>0$,  %satisfy $0<\ff{\kappa_2}{\kappa_1}<\ff{1+\sqrt{5}}{ 2},$
such that $\kappa_1\leq\sigma(t,x)\leq \kappa_2,$ $x\in \R$, $t>0.$
\end{itemize}
%\begin{rem} To ensure $(\ff{2(\kappa_2-\kappa_1)\kappa_2}{\kappa_1^2}, \ff{2\kappa_1^2}{\kappa_2^2})$ not is null in the coupling, we need  the technical condition   $0<\ff{\kappa_2}{\kappa_1}<\ff{1+\sqrt{5}}{ 2}$ in (H2), one can
%see details in  Theorem \ref{th1}.
%\end{rem}
From \cite[Theorem 1.2]{peng4}, under the assumption of  (H1),  for any $x\in\R$,  \eqref{0} has a   unique  solution  in $ M_G^2([0,T] )$.
In what follows,  for $T>0$, we define
\begin{equation*}
\bar{P}_Tf(x)=\bar{\mathbb{E}}f(X_T^x), \  f\in C_b^+(\mathbb{R}),
\end{equation*}
where  $ X_T^x$ solves \eqref{0} with initial value $x$.
\begin{rem}
In order to ensure the term $f(X_T^x)\in M_G^2([0,T] )$, we always assume $  f\in C_b^+(\mathbb{R})$.
\end{rem}
The remainder of the paper is organized as follows. In Section 2, we characterize the quasi-continuity of hitting time for processes of certain forms. Finally,  in Section 3 we present  the Harnack and $\log$  Harnack  inequalities for $G$-SDE \eqref{0}, so that main results  in \cite[Theorem 3.4.1, Chap.3]{W1} are extended to the present $G$-setting. Moreover, the gradient estimate is showed in this section.
 %Finally, in  %Section 3 we present some applications for Harnack inequalities.

\section{Main Results}
 Now we turn to the main result of this section.
 \subsection{Harnack and  log-Harnack  inequalities}
\begin{thm} \label{th1} Assume (H1)-(H2).
\begin{itemize}
  \item [(1)] For any nonnegative $ f \in C_b^+(\mathbb{R})$ and $ T>0,  x, y \in  \mathbb{R},$  it holds that
\begin{equation} \label {th1eq1}
\bar{P}_T \log f(y)\leq\log\bar{P}_Tf(x)+\frac{K\left(2+K+\frac{2}{\underline{\si}^2}\right)|x-y|^2}{2 \ff{\kappa_1^6}{\kappa_2^4}(1-\e^{-\underline{\sigma}^2K\left(2+K+\frac{2}{\underline{\si}^2}\right)T})}.
\end{equation}
 \item [(2)]  For  $p>(1+\ff{\kappa_2^3-\kappa_1\kappa_2^2}{\kappa_1^3})^2$, then
 \begin{equation} \label {th1eq2}
(\bar{P}_T f(y))^p\leq\bar{P}_Tf^p(x)\exp\left\{\frac{\sqrt{p}(\sqrt{p}-1) K\left(2+K+\frac{2}{\underline{\si}^2}\right)|x-y|^2}{4(\kappa_2-\kappa_1)[ {\kappa_1}(\sqrt{p}-1)-C](1-\e^{-\underline{\sigma}^2K\left(2+K+\frac{2}{\underline{\si}^2}\right)T})}\right\},
\end{equation}
holds for  any $x, y \in  \mathbb{R} $ and  $ f \in C_b^+(\mathbb{R})$.
\end{itemize}
\end{thm}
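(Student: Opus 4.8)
The plan is to adapt Wang's coupling by change of measure to the $G$-setting, using the $G$-Girsanov transform (Lemma \ref{lemma3}) together with the invariance of the quadratic variation (Lemma \ref{lem2.1}). Fix $x,y\in\R$ and let $X_t$ solve \eqref{0} from $x$ under $\BE$. I would construct a coupling process $Y_t$ started at $y$ that solves the same $G$-SDE but driven by $\hat{B}_t=B_t-\int_0^t\xi_s\,\d\<B\>_s$, namely
\begin{equation*}
\d Y_t = b(t,Y_t)\,\d t + h(t,Y_t)\,\d\<B\>_t + \sigma(t,Y_t)\big(\d B_t - \xi_t\,\d\<B\>_t\big),
\end{equation*}
so that under the tilted expectation $\hat{\E}[\,\cdot\,]=\BE[\,\cdot\,M_T]$ the process $Y$ has the same law as $X$, whence $\bar P_T g(y)=\hat{\E}[g(Y_T)]=\BE[M_T g(Y_T)]$ for admissible $g$. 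The drift $\xi_t$ is the essential ingredient: writing $\Theta_t:=X_t-Y_t$, I would choose $\xi_t$ so that $\sigma(t,Y_t)\xi_t=-\Theta_t/\ell(t)$ for a strictly decreasing weight $\ell$ with $\ell(T)=0$, so that $\d\Theta_t$ acquires a restoring term $-\tfrac{\Theta_t}{\ell(t)}\,\d\<B\>_t$. The weight is tuned so that the drift of $\log|\Theta_t|$ decays at rate $\lambda:=\underline{\sigma}^2K(2+K+\tfrac{2}{\underline{\sigma}^2})$, and the normalisation $\ell(t)\propto 1-\e^{-\lambda(T-t)}$ is exactly what will produce the factor $1-\e^{-\underline{\sigma}^2K(2+K+2/\underline{\sigma}^2)T}$ in both bounds.

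Next I would prove the coupling is successful, i.e. $\Theta_T=0$, $c$-q.s. Since (H1) makes every coefficient difference $O(|\Theta_t|)$, the equation for $\Theta$ is linear in $\Theta$, and Itô's formula gives
\begin{equation*}
\log\frac{|\Theta_t|}{|\Theta_0|}=\int_0^t a_s\,\d s+\int_0^t\Big(c_s-\tfrac12 d_s^2-\tfrac{1}{\ell(s)}\Big)\,\d\<B\>_s+\int_0^t d_s\,\d B_s,
\end{equation*}
with $|a_s|,|c_s|,|d_s|\le K$ by (H1) and $\underline{\sigma}^2\,\d s\le\d\<B\>_s\le\bar{\sigma}^2\,\d s$ by (H2) and \eqref{G(A)}. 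Because $1/\ell$ is non-integrable at $T$ while the remaining integrals stay bounded, $\log|\Theta_t|\to-\infty$, so $\Theta_T=0$. Here I must check that $\xi$ is admissible: that it lies in $M_G^2([0,T])$ and satisfies the $G$-Novikov condition \eqref{Novikov}, so that Lemma \ref{lema2.10} makes $M$ a symmetric $G$-martingale and Lemma \ref{lemma3} applies. Since $\xi$ is singular as $t\uparrow T$, I would work up to the coupling time $\tau=\inf\{t:\Theta_t=0\}$, glue $Y_t:=X_t$ on $[\tau,T]$, and pass to the limit using the quasi-continuity of $\tau$ established in Section 2 together with the monotone convergence theorem (Theorem \ref{Mono}).

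For part (1), the coupling gives $\bar P_T\log f(y)=\BE[M_T\log f(X_T)]$; applying the entropy (Young) inequality $\BE[M_T\log f(X_T)]\le\log\BE[f(X_T)]+\BE[M_T\log M_T]$ reduces matters to the cost $\BE[M_T\log M_T]=\tfrac12\hat{\E}\big[\int_0^T\xi_t^2\,\d\<B\>_t\big]$, where $\xi_t=-\Theta_t/(\sigma(t,Y_t)\ell(t))$. Bounding $\sigma$ via (H2), $\d\<B\>$ via \eqref{G(A)}, and evaluating the resulting $\d t$-integral against the exponential decay of $\Theta$ (with $|\Theta_0|=|x-y|$) yields the factor $\tfrac{\kappa_1^6}{\kappa_2^4}$ and the claimed bound \eqref{th1eq1}. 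For part (2), I would instead use Hölder's inequality, giving $(\bar P_T f(y))^p\le\bar P_Tf^p(x)\,(\BE[M_T^{p/(p-1)}])^{p-1}$, and then estimate the moment $\BE[M_T^{p/(p-1)}]$ by splitting off a $G$-martingale factor and controlling the leftover $\exp\{c\int_0^T\xi_t^2\,\d\<B\>_t\}$.

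The main obstacle is precisely this last moment estimate. Because the noise is multiplicative, the martingale part $[\sigma(t,X_t)-\sigma(t,Y_t)]\,\d B_t$ feeds back into the fluctuations of $\Theta$, and hence into $\xi$, so the exponential moment of $M_T^{p/(p-1)}$ is only finite when $p$ is large enough to beat the extra quadratic-variation penalty; this is the source of the threshold $p>(1+\tfrac{\kappa_2^3-\kappa_1\kappa_2^2}{\kappa_1^3})^2$ and of the constant $C$ (equivalently the requirement $\kappa_1(\sqrt p-1)-C>0$) in the denominator of \eqref{th1eq2}. A secondary, genuinely $G$-specific difficulty is that every step above—the construction of $\xi$, the Novikov verification, the gluing at the coupling time, and the entropy and Hölder estimates—must be carried out for the sublinear expectation $\BE=\sup_{P\in\mathcal P}\E_P$, which is why the quasi-continuity of the hitting time from Section 2 and the monotone convergence theorem (Theorem \ref{Mono}) are needed to justify passing to the singular limit $t\uparrow T$.
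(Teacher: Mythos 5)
Your overall strategy---coupling by change of measure with a singular weight $\ell(t)\propto 1-\e^{-\lambda(T-t)}$, $G$-Girsanov via Lemma \ref{lemma3}, the entropy (Young) inequality for the log-Harnack bound and H\"older plus a moment estimate on $M_T^{p/(p-1)}$ for the power-Harnack bound---is the paper's strategy, and your accounting of where the constants and the threshold on $p$ come from is essentially right. The genuine gap is at the single most delicate step: you propose to make $M$ a symmetric $G$-martingale by verifying the $G$-Novikov condition \eqref{Novikov} and invoking Lemma \ref{lema2.10}. This cannot be done under (H1)--(H2) alone. In dimension one $|g_t|^2\le |X_t-Y_t|^2/(\kappa_1^2(\lambda_t^\alpha)^2)$, and the only exponential moment available for $\int_0^T\frac{|X_t-Y_t|^2}{(\lambda_t^\alpha)^2}\d\<B\>_t$ (obtained from \eqref{4'} by the exponential-martingale argument, as in Lemma \ref{lem3.9}) carries the constant $\frac{\alpha^2}{8(\kappa_2-\kappa_1)^2}$. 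Novikov therefore demands $\frac{\alpha^2\kappa_1^2}{8(\kappa_2-\kappa_1)^2}>\frac12$, i.e.\ $\alpha>2(\kappa_2-\kappa_1)/\kappa_1$, which is compatible with the structural constraint $\alpha<2\kappa_1^2/\kappa_2^2$ only under the pinching condition $(\kappa_2-\kappa_1)\kappa_2^2<\kappa_1^3$. Part (1) of the theorem is asserted for \emph{all} $\kappa_2\ge\kappa_1>0$, so your route proves strictly less than the statement; this is exactly why the paper remarks that Novikov ``imposes too strong assumptions on $\kappa_1,\kappa_2$'' and abandons it.

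What the paper does instead, and what your proposal is missing, is a martingale-convergence argument under nonlinear expectation. It localizes with the quasi-continuous stopping times $\hat\tau_n$ of Lemma \ref{s3}, on which $g$ is bounded so that the Girsanov theorem of \cite{HJPS} applies without any Novikov-type hypothesis; it then proves the entropy bound $\bar{\mathbb{E}}[M_{s\wedge\hat\tau_n}\log M_{s\wedge\hat\tau_n}]\le \frac{|x-y|^2}{2\alpha\kappa_1^2\lambda_0^\alpha}$ from \eqref{2}, which by the de la Vall\'ee-Poussin-type criterion (Lemma \ref{um}, with $f(t)=t\log t$) yields uniform integrability; finally Lemma \ref{lem3.2} and the continuous-time convergence theorem (Theorem \ref{cts}) upgrade $\{M_s\}$ to a uniformly integrable symmetric $G$-martingale on all of $[0,T]$ with $\bar{\mathbb{E}}[M_T]=1$. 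A secondary divergence: rather than gluing $Y:=X$ after the coupling time---a pasting that is awkward for $\bar{\mathbb{E}}$---the paper extends the drift $g$ to $\bar g\in M_G^2([0,T])$ as the $M_G^2$-limit of $g1_{[0,T-1/n]}$ using both halves of Theorem \ref{Mono}, and lets $Y$ solve \eqref{1'}; the success of the coupling, $X_T=Y_T$ $c$-q.s., is then proved by contradiction from the finiteness of $\hat{\mathbb{E}}\int_0^T\frac{|X_t-Y_t|^2}{(\lambda_t^\alpha)^2}\d\<\hat B\>_t$ (Lemma \ref{lem3.8}), rather than by your $\log|\Theta_t|\to-\infty$ argument. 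Since both parts (1) and (2) rest on $M$ being a symmetric $G$-martingale, the Novikov gap propagates to your entire proof.
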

To make the proof easy to follow, let us   divide  the proof into the following aspects.
\subsubsection{Martingale convergence }

To apply $G$-Girsanov's formula in Lemma \ref{lemma3}, we need to check that $M$ is a symmetric $G$-martingale. From Lemma \ref{lema2.10},
 we know that $G$-Novikov's condition is a sufficient condition for $M$ to be a symmetric $G$-martingale. However, if we take this for calculation, the assumptions we impose on $\kappa_1, \kappa_2$ are too strong, thus,
 we propose the notion of  uniform integrability
under a nonlinear expectation \cite{cohen}. We would like  to point out \cite{cohen} discusses the martingale convergence in discrete time, for simplicity, we still use  $\BE$ in this paper instead of the notion in \cite{cohen}.

We define the space $L^1$ as the completion under  $(\bar{\mathbb{E}}[|\cdot|])$ of the set
$$\{X\in \mathcal{H} \ |\ (\bar{\mathbb{E}}[|\cdot|])<\infty\},$$
where $\mathcal{H}$ be a vector lattice of real valued functions defined
on $\Omega$, namely $c\in \mathcal{H}$ for each constant $c$ and $|X|\in \mathcal{H}$  if $X \in \mathcal{H}$.
\begin{defn} Let $K\subset L^1$. $K$ is said to be uniformly integrable (u.i.)
if $ \BE(1_{\{|X|\geq c\}}|X|)$ converges to 0 uniformly in $X \in K$ as $c \rightarrow \infty$.
\end{defn}
\begin{lem}(\cite[Corollary 3.1.1]{cohen}) \label{um}
Let $K \subset L^1$. Suppose there is a positive function
$f$ defined on $[0, \infty[$ such that $\lim_{t\rightarrow\infty} t^{-1}f(t) = \infty$ and $\sup_{X\in K} \BE(f \circ |X|) < \infty$.
Then $K$ is uniformly integrable.
\end{lem}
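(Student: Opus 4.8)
The plan is to transcribe the classical de la Vall\'ee--Poussin criterion into the sublinear setting, using only monotonicity and positive homogeneity of $\BE$; the sublinearity (rather than linearity) of $\BE$ never obstructs the argument because every estimate employed is one-sided. First I would set $M := \sup_{X\in K}\BE(f\circ|X|)$, which is finite by hypothesis, and fix an arbitrary $\varepsilon>0$. Since $\lim_{t\to\infty}t^{-1}f(t)=\infty$, there exists a threshold $c_0>0$ such that $f(t)\geq (M/\varepsilon)\,t$, equivalently $t\leq(\varepsilon/M)f(t)$, for every $t\geq c_0$. Note that no monotonicity or convexity of $f$ is required here: the growth condition alone produces $c_0$.

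The key step is a quasi-sure pointwise estimate. For any $X\in K$ and any $c\geq c_0$ I would show that
$$1_{\{|X|\geq c\}}|X|\leq \frac{\varepsilon}{M}\,f(|X|) \quad c\text{-q.s.}$$
Indeed, on the event $\{|X|\geq c\}$ one has $|X|\geq c_0$, so the choice of $c_0$ yields $|X|\leq(\varepsilon/M)f(|X|)$; on the complementary event $\{|X|<c\}$ the left-hand side vanishes while the right-hand side is nonnegative since $f>0$. Applying monotonicity of $\BE$ and then positive homogeneity gives
$$\BE\big(1_{\{|X|\geq c\}}|X|\big)\leq \BE\Big(\tfrac{\varepsilon}{M}\,f(|X|)\Big)=\frac{\varepsilon}{M}\,\BE(f\circ|X|)\leq \frac{\varepsilon}{M}\cdot M=\varepsilon.$$
Because this bound is independent of $X\in K$ and valid for all $c\geq c_0$, taking the supremum over $X\in K$ yields $\sup_{X\in K}\BE(1_{\{|X|\geq c\}}|X|)\leq \varepsilon$ for every $c\geq c_0$, hence $\sup_{X\in K}\BE(1_{\{|X|\geq c\}}|X|)\to 0$ as $c\to\infty$, which is exactly the definition of uniform integrability of $K$.

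The only point genuinely requiring care — and the place I would expect the main (minor) obstacle — is verifying that the random variables $1_{\{|X|\geq c\}}|X|$ and $f\circ|X|$ lie in the space $L^1$ on which $\BE$ is monotone and positively homogeneous, so that the above manipulations are legitimate. This is resolved by the domination $1_{\{|X|\geq c\}}|X|\leq(\varepsilon/M)\,f\circ|X|$, the fact that $f\circ|X|\in L^1$ (implicit in $\BE(f\circ|X|)<\infty$), and the completeness of $L^1$ under the $\BE(|\cdot|)$-norm. Thus the entire argument reduces to the scalar growth estimate for $f$ plus the two structural properties of the nonlinear expectation, and no deeper probabilistic input is needed.
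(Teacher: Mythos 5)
Your proof is correct: it is the classical de la Vall\'ee--Poussin criterion transcribed to the sublinear setting, and every step is legitimate, since the pointwise bound $1_{\{|X|\geq c\}}|X|\leq(\varepsilon/M)f(|X|)$ holds everywhere (not merely quasi-surely) and the only properties of $\BE$ invoked are monotonicity and positive homogeneity, both of which hold for the upper expectation. The paper itself offers no proof of this lemma --- it is quoted verbatim from \cite[Corollary 3.1.1]{cohen} --- and your argument is essentially the one given in that reference, so your self-contained write-up simply fills in the citation.
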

Let
$$\mathcal{H}^{ext}  = \{ X \in mF| \min \{\E_{P_h}[X^+], \E_{P_h}[X^-] < \infty \} \ for \  all  \ h\in  L^2_{ F^0}([0,T];\Gamma^{\ff{1}{2}}) \},$$
where  $mF$ is the space of  $F_T$-measurable $\R \cup {\pm\infty}$-valued    functions.
According to \cite{cohen},
$$
L_b^{p} =\{ X\in L^0(\Omega_T)\ |\lim_{N\to \infty}\bar{\E}[|X|^p1_{|X|\geq N}]=0 \}.
$$
This does not need to restrict our attention to those random variables admitting a quasi-continuous version compared with the structure of $L_G^{p}(\Omega_T)$. It's clear that $L_G^{p}(\Omega_T)\subset L_b^p.$
\begin{lem}(\cite[Theorem 3.2]{cohen})\label{lem3.2}
Suppose $(X_n)_{n\geq1} \subset L_b^1$, and $X \in \mathcal{H}^{ext}$. Then  $X_n$
converge in $L^1$ norm to $X$ if and only if the collection $(X_n)_{n\geq 1}$ is uniformly
integrable and the $X_n$ converge in capacity to $X$.
Furthermore, in this case, the collection $(X_n)_{n\geq 1} \cup {X} $ is also uniformly
integrable and $X \in L_b^1.$
\end{lem}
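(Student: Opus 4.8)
The plan is to recognize Lemma~\ref{lem3.2} as the Vitali convergence theorem transported to the sublinear setting, and to prove it by reducing everything to the representation $\BE[\cdot]=\sup_{P\in\mathcal{P}}\E_P[\cdot]$ together with $c(A)=\sup_{P\in\mathcal{P}}P(A)$. Three elementary facts drive the argument, each obtained by taking suprema over $\mathcal{P}$ in the corresponding classical statement: (i) a sublinear Markov inequality $c(|Y|>\epsilon)\le\epsilon^{-1}\BE[|Y|]$; (ii) a Fatou inequality $\BE[\liminf_n Y_n]\le\liminf_n\BE[Y_n]$ for $Y_n\ge0$, valid since for each fixed $P$ the classical Fatou lemma gives $\E_P[\liminf_n Y_n]\le\liminf_n\E_P[Y_n]\le\liminf_n\BE[Y_n]$ and one then takes the supremum in $P$; and (iii) countable subadditivity of $c$. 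The real workhorse is the equivalence, in this setting, between uniform integrability of a family and its being $L^1$-bounded together with uniformly absolutely continuous with respect to $c$: if $\sup_n\BE[|X_n|1_{\{|X_n|\ge a\}}]<\delta/2$ and $A$ is any event, then by subadditivity and monotonicity $\BE[|X_n|1_A]\le a\,c(A)+\delta/2$, so that $c(A)<\delta/(2a)$ forces $\sup_n\BE[|X_n|1_A]<\delta$.

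For the sufficiency direction (uniform integrability of $(X_n)$ and $X_n\to X$ in capacity imply $L^1$ convergence) I would first upgrade $X\in\mathcal{H}^{ext}$ to $X\in L_b^1$. Uniform integrability yields $\sup_n\BE[|X_n|]<\infty$. Convergence in capacity, via fact (iii) and a Borel--Cantelli selection (choose $n_k$ with $c(|X_{n_k}-X|>2^{-k})<2^{-k}$), produces a subsequence converging to $X$ quasi-surely; Fatou (fact (ii)) gives $\BE[|X|]\le\liminf_k\BE[|X_{n_k}|]<\infty$, and a Fatou argument applied to the tail truncations, together with the uniform integrability of $(X_n)$, upgrades this to $\lim_{N\to\infty}\BE[|X|1_{\{|X|\ge N\}}]=0$, i.e. $X\in L_b^1$. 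With integrability in hand I split, for $A_n:=\{|X_n-X|>\epsilon\}$,
\begin{equation*}
\BE[|X_n-X|]\le\epsilon+\BE[|X_n|1_{A_n}]+\BE[|X|1_{A_n}],
\end{equation*}
using subadditivity and $|X_n-X|\le|X_n|+|X|$ on $A_n$. Since $c(A_n)\to0$ by convergence in capacity, the uniform absolute continuity from the workhorse lemma (applied to the family $(X_n)$ and to the single function $X$) makes the last two terms smaller than any prescribed $\delta$ for all large $n$; letting $\epsilon,\delta\downarrow0$ gives $\BE[|X_n-X|]\to0$.

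For the necessity direction, convergence in capacity is immediate from the sublinear Markov inequality, $c(|X_n-X|>\epsilon)\le\epsilon^{-1}\BE[|X_n-X|]\to0$. For uniform integrability I would work from the $L^1$-Cauchy property: choose $N$ with $\BE[|X_n-X_N|]<\epsilon/3$ for $n\ge N$; then for any event $A$ one has $\BE[|X_n|1_A]\le\BE[|X_n-X_N|]+\BE[|X_N|1_A]<\epsilon/3+\BE[|X_N|1_A]$, while $\sup_n\BE[|X_n|]<\infty$ bounds $c(|X_n|\ge a)\le a^{-1}\sup_n\BE[|X_n|]$ uniformly. Since the finite family $\{X_1,\dots,X_N\}\subset L_b^1$ is uniformly absolutely continuous, taking $A=\{|X_n|\ge a\}$ with $a$ large makes $\sup_n\BE[|X_n|1_{\{|X_n|\ge a\}}]$ small, which is exactly uniform integrability. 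The ``furthermore'' clause then follows: once both hypotheses are known to hold, the sufficiency argument already produced $X\in L_b^1$, and adjoining the single $L_b^1$-element $X$ to a uniformly integrable family preserves uniform integrability.

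The main obstacle I anticipate is controlling the cross term $\BE[|X_n|1_{A_n}]$, in which both the integrand and the integration set vary with $n$; this is precisely where the workhorse equivalence (uniform integrability $\Leftrightarrow$ $L^1$-boundedness plus uniform absolute continuity in $c$) is indispensable, and it must be established directly for the sublinear functional rather than quoted from the linear theory. A secondary technical point is the passage from convergence in capacity to a quasi-surely convergent subsequence, which rests on the countable subadditivity of $c$ and a Borel--Cantelli argument. I would isolate this, together with the sublinear Fatou inequality, as preliminary observations, so that the two implications then read as in the classical Vitali proof with suprema over $\mathcal{P}$ inserted at each estimate.
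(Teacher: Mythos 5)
The paper never proves this lemma: it is imported verbatim as Theorem 3.2 of the cited work of Cohen--Ji--Peng, so there is no internal proof to compare against. Your argument is, as far as I can check, a correct self-contained proof in the setting of this paper, where $\BE[\cdot]=\sup_{P\in\mathcal{P}}\E_P[\cdot]$ and $c(\cdot)=\sup_{P\in\mathcal{P}}P(\cdot)$: monotonicity and subadditivity of $\BE$, the one-sided Fatou inequality, the Markov inequality and countable subadditivity of $c$ all survive taking suprema over $\mathcal{P}$, and these are the only properties your Vitali-type argument uses --- crucially, you never invoke dominated convergence or countable additivity of $c$, which do fail in the sublinear setting. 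The identification of uniform integrability with $L^1$-boundedness plus uniform absolute continuity in $c$ is the right workhorse, and both the sufficiency splitting over $A_n=\{|X_n-X|>\epsilon\}$ and the Cauchy-based necessity argument go through verbatim. One step needs repair: when you upgrade $X\in\mathcal{H}^{ext}$ to $X\in L_b^1$ by ``a Fatou argument applied to the tail truncations,'' note that $y\mapsto y\,1_{\{y\geq N\}}$ is discontinuous, so quasi-sure convergence $X_{n_k}\to X$ does not give pointwise convergence of $|X_{n_k}|1_{\{|X_{n_k}|\geq N\}}$ to $|X|1_{\{|X|\geq N\}}$ (points with $|X|=N$ obstruct this), and Fatou cannot be applied to these functions directly. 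The standard fix is to apply Fatou to the continuous truncations $(|X_{n_k}|-N)^+$ and sandwich: $\BE[|X|1_{\{|X|\geq 2N\}}]\leq 2\,\BE[(|X|-N)^+]\leq 2\liminf_k\BE[(|X_{n_k}|-N)^+]\leq 2\sup_n\BE[|X_n|1_{\{|X_n|\geq N\}}]\to 0$ as $N\to\infty$ by uniform integrability. With that modification your proof is complete, and it gives the paper a self-contained justification of a result it otherwise only quotes.
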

\begin{lem}(\cite[Theorem 4.4]{cohen})
 Let $(X_n)_{n\geq1} $ be a  $G$-submartingale with $\sup_k\BE(|X_k|)<\infty$. Then
$X_n\rightarrow X_\infty \in H^{ext}$, q.s..
\end{lem}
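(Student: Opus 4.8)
The plan is to prove a quasi-sure submartingale convergence theorem by transporting Doob's upcrossing argument to the sublinear expectation $\BE$, and then to locate the limit inside $\mathcal H^{ext}$ by a Fatou argument under each representing measure. First I would fix rationals $a<b$ and reduce to the nonnegative case: since $\phi(x)=(x-a)^+$ is convex and nondecreasing, the conditional Jensen inequality for sublinear expectations together with the submartingale property gives, for $s\le t$, $\BE_s[\phi(X_t)]\ge\phi(\BE_s[X_t])\ge\phi(X_s)$, so that $Y_k:=(X_k-a)^+$ is again a $G$-submartingale, now nonnegative, and the upcrossings of $[a,b]$ by $(X_n)$ coincide with the upcrossings of $[0,b-a]$ by $(Y_n)$. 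This reduction removes sign ambiguities in the boundary excursions and lets all later estimates use $Y_k\ge0$.

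Next I would set up the upcrossing inequality. Let $U_N[a,b]$ be the number of completed upcrossings of $[a,b]$ by $(X_k)_{k\le N}$, and let $C_k\in\{0,1\}$ be the $\F_{k-1}$-measurable predictable indicator that index $k$ lies inside an upcrossing excursion, with transform $(C\cdot Y)_N=\sum_{k=1}^N C_k(Y_k-Y_{k-1})$. The pathwise bound $(b-a)\,U_N[a,b]\le (C\cdot Y)_N$ holds exactly as in the classical case because each completed excursion contributes at least $b-a$ and, by nonnegativity of $Y$, the last incomplete excursion contributes a nonnegative amount. The calculations that survive the passage to nonlinearity are the conditional ones: since $0\le C_k\le1$ is $\F_{k-1}$-measurable, positive homogeneity and the submartingale property give $\BE_{k-1}[C_k(Y_k-Y_{k-1})]=C_k\big(\BE_{k-1}[Y_k]-Y_{k-1}\big)\ge0$, so the transform $(C\cdot Y)_\cdot$ is itself a nonnegative $G$-submartingale started at $0$. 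The goal of this step is the estimate $(b-a)\,\BE[U_N[a,b]]\le \BE[(X_N-a)^+]\le \sup_k\BE[|X_k|]+|a|<\infty$.

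Once this bound is in hand, the limit passage is robust. As $N\uparrow\infty$ one has $U_N[a,b]\uparrow U_\infty[a,b]$ quasi-surely, so the monotone convergence theorem (Theorem \ref{Mono}) yields $\BE[U_\infty[a,b]]<\infty$; by the representation $\BE=\sup_{P\in\mathcal P}\E_P$ this forces $\E_P[U_\infty[a,b]]<\infty$ for every $P\in\mathcal P$, whence $c(\{U_\infty[a,b]=\infty\})=\sup_{P\in\mathcal P}P(U_\infty[a,b]=\infty)=0$. Since the set on which $(X_n)$ fails to converge in $\R\cup\{\pm\infty\}$ is contained in $\bigcup_{a<b,\,a,b\in\Q}\{U_\infty[a,b]=\infty\}$, a countable union of $c$-polar sets, it is $c$-polar; hence $X_n\to X_\infty$ quasi-surely with $X_\infty\in mF$. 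To place $X_\infty$ in $\mathcal H^{ext}$, I would fix $h\in L^2_{F^0}([0,T];\Gamma^{1/2})$: quasi-sure convergence gives $P_h$-almost sure convergence, so Fatou's lemma under the linear measure $P_h$ gives $\E_{P_h}[X_\infty^+]\le\liminf_N\E_{P_h}[X_N^+]\le\sup_k\BE[|X_k|]<\infty$, so that $\min\{\E_{P_h}[X_\infty^+],\E_{P_h}[X_\infty^-]\}<\infty$ for every such $h$, i.e. $X_\infty\in\mathcal H^{ext}$.

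I expect the genuine obstacle to be the upper bound in the second step, namely establishing $(b-a)\BE[U_N[a,b]]\le\BE[(X_N-a)^+]$ under the sublinear expectation. In the linear theory one writes $(C\cdot Y)_N=(Y_N-Y_0)-(D\cdot Y)_N$ with $D=1-C$ and uses that the complementary transform $(D\cdot Y)_N$ has nonnegative expectation; under $\BE$ this step breaks, because subadditivity controls sums only from above and the \emph{lower} expectation $-\BE[-(D\cdot Y)_N]$ of the complementary transform need not be nonnegative, even though $\BE[(D\cdot Y)_N]\ge0$ does hold. Resolving this requires running the estimate entirely through the conditional sublinear expectation $\BE_{\cdot}$ excursion by excursion, for which the natural tool is a $G$-optional sampling inequality ($\BE_{\rho}[Y_\tau]\ge Y_\rho$ for bounded stopping times $\rho\le\tau$) applied between the successive down- and up-crossing times; obtaining this sampling inequality in the quasi-sure setting, and checking the requisite measurability and quasi-continuity of the crossing times and of $U_\infty[a,b]$ so that Theorem \ref{Mono} and the capacity bound apply, is the technical heart of the argument and is where I would expect to invoke the machinery of \cite{cohen}.
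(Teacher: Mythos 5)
You should know at the outset that the paper contains no proof of this lemma: it is imported verbatim, with citation, from \cite[Theorem 4.4]{cohen}, so your proposal can only be judged on its own merits. On those merits it is incomplete at exactly the point that matters. The pathwise bound $(b-a)U_N[a,b]\le (C\cdot Y)_N$, the monotone-convergence/capacity passage from $U_N$ to $U_\infty$, and the closing Fatou argument placing $X_\infty$ in $\mathcal{H}^{ext}$ are all sound; but the upcrossing estimate $(b-a)\,\BE[U_N[a,b]]\le\BE[(X_N-a)^+]$ --- the one inequality carrying all of the probabilistic content --- is never established. Your final paragraph concedes this, correctly diagnoses why Doob's complementary-transform trick fails under a merely subadditive expectation, and then defers the repair to an unproved ``$G$-optional sampling inequality'' and ``the machinery of \cite{cohen}''. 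A proof whose central lemma is announced rather than proved is an outline, not a proof.

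The deeper problem is that the gap cannot be closed along the route you chose. Throughout (most visibly in the Jensen step $\BE_s[\phi(X_t)]\ge\phi(\BE_s[X_t])\ge\phi(X_s)$) you read ``$G$-submartingale'' as $\BE_s[X_t]\ge X_s$, a condition on the \emph{upper} conditional expectation. Under that reading the statement itself is false, already inside this paper's $G$-Brownian framework: take $\phi$ bounded Lipschitz with $\phi\le 1$, $\phi(\underline{\sigma}^2)=1$, $\phi(\overline{\sigma}^2)=-1$, and set $X_n=\phi(\langle B\rangle_n-\langle B\rangle_{n-1})$. Since the increment $\langle B\rangle_n-\langle B\rangle_{n-1}$ is maximally distributed on $[\underline{\sigma}^2,\overline{\sigma}^2]$ and independent of $\F_{n-1}$, one has $\BE_{n-1}[X_n]=\max_{v\in[\underline{\sigma}^2,\overline{\sigma}^2]}\phi(v)=1\ge X_{n-1}$ and $\sup_n\BE[|X_n|]\le 1$; yet under the measure $P_h\in\mathcal{P}_M$ whose volatility alternates between $\underline{\sigma}$ and $\overline{\sigma}$ on successive unit intervals, $X_n=(-1)^{n+1}$ $P_h$-a.s., so the divergence set has capacity one. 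Hence no excursion-by-excursion sampling argument can deliver your upcrossing bound in this generality. What makes the cited theorem true is that in \cite{cohen} a submartingale is a process for which $-X$ is a supermartingale; since $\E_P[\,\cdot\,|\F_s]\le\BE_s[\,\cdot\,]$ $P$-a.s. for every $P\in\mathcal{P}$, this is precisely the requirement that $X$ be a \emph{classical} submartingale under every representing measure simultaneously. With that reading the whole proof is a soft measure-wise reduction: the classical Doob upcrossing inequality holds under each $P$, taking suprema over $P\in\mathcal{P}$ yields the $\BE$-upcrossing bound, the divergence set is $P$-null for every $P$ and hence $c$-polar, and your own Fatou step finishes the identification $X_\infty\in\mathcal{H}^{ext}$. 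So your steps three and four survive intact, but step two must be replaced by ``apply the classical inequality under each $P$ and take suprema'', and that replacement is available only under the measure-wise interpretation of the submartingale property --- which is also the only interpretation consistent with the paper's use of this lemma, namely for symmetric $G$-martingales, which are classical $P$-martingales for every $P\in\mathcal{P}$.
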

\begin{lem}(\cite[Theorem 4.5]{cohen})
 Let $(X_n)_{n\geq1} $ be a uniformly integrable $G$-submartingale. Then
taking $X_\infty = lim_{n\rightarrow\infty}X_n$, the process $(X_n)_{n\geq1 \cup\infty} $ is also a uniformly integrable
$G$-submartingale. In particular, this implies that $X_\infty \in L^1_b$.
\end{lem}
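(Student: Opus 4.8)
The plan is to assemble the three preceding lemmas into a single limiting argument, and then to close the submartingale inequality by passing to the limit through the nonexpansiveness of the conditional $G$-expectation on $L^1$.

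First I would observe that a uniformly integrable family is automatically bounded in $L^1$: choosing $c$ so large that $\sup_n \BE(1_{\{|X_n|\ge c\}}|X_n|)\le 1$ gives $\sup_n\BE(|X_n|)\le c+1<\infty$. This is exactly the hypothesis of the almost-sure convergence theorem quoted just above (\cite[Theorem 4.4]{cohen}), so there exists $X_\infty\in\mathcal{H}^{ext}$ with $X_n\to X_\infty$, $c$-q.s. Since quasi-sure convergence is convergence off a $c$-polar set, it in particular forces $X_n\to X_\infty$ in capacity.

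Next, with $(X_n)_{n\ge 1}\subset L_b^1$, $X_\infty\in\mathcal{H}^{ext}$, the family $(X_n)$ uniformly integrable, and $X_n\to X_\infty$ in capacity, every hypothesis of Lemma~\ref{lem3.2} is in force. That lemma delivers simultaneously the $L^1$-convergence $X_n\to X_\infty$, the uniform integrability of the enlarged family $(X_n)_{n\ge1}\cup\{X_\infty\}$, and the membership $X_\infty\in L_b^1$. This already settles the uniform-integrability assertion and the final claim $X_\infty\in L_b^1$.

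It remains to verify that the extended process is still a $G$-submartingale, that is, $\BE_m[X_\infty]\ge X_m$ for every $m$. Fix $m$ and recall $\BE_m[X_n]\ge X_m$, $c$-q.s., for all $n\ge m$. The conditional $G$-expectation is nonexpansive in $L^1$: sublinearity gives $|\BE_m[X_n]-\BE_m[X_\infty]|\le\BE_m[|X_n-X_\infty|]$, and the tower property gives $\BE(|\BE_m[X_n]-\BE_m[X_\infty]|)\le\BE(|X_n-X_\infty|)\to 0$. Hence $\BE_m[X_n]\to\BE_m[X_\infty]$ in $L^1$; extracting a $c$-q.s.-convergent subsequence and passing to the limit in $\BE_m[X_n]\ge X_m$ preserves the inequality $c$-q.s., yielding $\BE_m[X_\infty]\ge X_m$.

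The main obstacle I anticipate is precisely this last step. One must first make sense of $\BE_m[X_\infty]$ when $X_\infty$ lies only in $L_b^1$ rather than in $L_G^1(\Omega_T)$, so that the conditional $G$-expectation is not given directly by the $G$-heat-equation construction; and then one must keep the inequality $\BE_m[X_n]\ge X_m$ quasi-sure while taking limits. This is why the passage to the limit should be carried out only after the $L^1$-convergence is in hand (so that a $c$-q.s.-convergent subsequence is available), rather than on the capacity convergence alone.
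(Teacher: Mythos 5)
There is no in-paper proof to compare against here: the paper quotes this statement from \cite[Theorem 4.5]{cohen} without proof. Judging your proposal on its own terms, it contains one fatal step. You assert that since $X_n\to X_\infty$ quasi-surely, this ``in particular forces $X_n\to X_\infty$ in capacity.'' That implication, automatic for a single probability measure, is \emph{false} for a capacity of the form $c(A)=\sup_{P\in\mathcal{P}}P(A)$, even when $\mathcal{P}$ is weakly compact and the family $(X_n)$ is uniformly integrable. Quasi-sure convergence gives $P(|X_n-X_\infty|\ge\varepsilon)\to 0$ for each \emph{fixed} $P\in\mathcal{P}$, whereas convergence in capacity demands this uniformly in $P$, and a supremum over infinitely many measures need not converge. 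Concretely: take $\Omega=[0,1]$, $\mathcal{P}=\{\delta_x : x\in[0,1]\}$ (weakly compact), and $X_n(x)=\max\left(0,1-n\left|x-\tfrac12\right|\right)$. Each $X_n$ is continuous and bounded by $1$ (hence the family is uniformly integrable), the only $c$-polar set is $\emptyset$, and $X_n\to 1_{\{1/2\}}$ pointwise, hence quasi-surely; yet $c\left(|X_n-1_{\{1/2\}}|\ge\tfrac12\right)=1$ for every $n$, and $\bar{\E}[|X_n-1_{\{1/2\}}|]=1$ for every $n$. This is precisely why Lemma \ref{lem3.2} (\cite[Theorem 3.2]{cohen}) lists convergence in capacity as a hypothesis \emph{separate} from uniform integrability: if quasi-sure convergence implied it, that hypothesis would be redundant in exactly the way your argument assumes.

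Everything downstream collapses with this step: the uniform integrability of the enlarged family, the membership $X_\infty\in L_b^1$, and the $L^1$-convergence on which your closing of the submartingale inequality $\BE_m[X_\infty]\ge X_m$ rests are all extracted from Lemma \ref{lem3.2}, whose hypotheses you have not verified. The counterexample above does not, of course, contradict the cited theorem itself (it carries no filtration or submartingale structure); rather, it shows that the theorem cannot follow from the quasi-sure convergence of the preceding lemma (\cite[Theorem 4.4]{cohen}) plus uniform integrability \emph{alone}. Any correct proof must exploit the submartingale property itself to produce the missing uniformity in $P$ (e.g.\ via downcrossing estimates uniform over $\mathcal{P}$, or a measure-by-measure argument combined with the representation of the conditional sublinear expectation), which is the actual content of \cite[Theorem 4.5]{cohen}. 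Your final paragraph correctly flags the secondary issue of making sense of $\BE_m[X_\infty]$ when $X_\infty$ lies only in $L_b^1$, but the capacity-convergence gap is the decisive one.
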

In the following, we aim to extend the convergence theorem for $G$-martingale from discrete time to continuous time.
\begin{thm}\label{cts}
Let $(X_s)_{s\in[0,T)}\subset L_G^1(\Omega_T)$  be a uniformly integrable $G$-martingale. Then
taking $X_T = lim_{t\rightarrow T}X_t$, the process $(X_s)_{s\in[0,T]} $ is also a uniformly integrable
$G$-martingale. In particular, this implies that $X_T \in  L_G^1(\Omega_T)$.
\end{thm}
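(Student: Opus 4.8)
\textbf{Proof proposal for Theorem \ref{cts}.}

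The plan is to lift the discrete-time machinery of \cite{cohen} (Lemmas \ref{um}--\ref{lem3.2} and the two submartingale convergence results quoted above) to the continuous-time setting by a standard dyadic-discretization argument. First I would fix a sequence of partitions $\pi_n = \{t^n_0 = 0 < t^n_1 < \cdots < t^n_{k_n} = T\}$ with mesh tending to $0$, arranged to be nested so that $\bigcup_n \pi_n$ is dense in $[0,T]$; for instance the dyadic points $t^n_k = kT/2^n$. Along any such partition the restricted family $(X_{t^n_k})_k$ is a \emph{discrete-time} $G$-martingale with $\sup_k \BE(|X_{t^n_k}|) < \infty$ (each $X_s \in L_G^1(\Omega_T)$ and the family is u.i., so in particular $L^1$-bounded). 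Applying \cite[Theorem 4.4]{cohen} gives convergence along the dyadic rationals, and since the whole family is assumed uniformly integrable, \cite[Theorem 4.5]{cohen} upgrades this to convergence in $L^1$ and identifies the limit $X_T$ as sitting in $L^1_b$ with $(X_{t^n_k})_{k \cup \infty}$ again a u.i.\ $G$-submartingale (hence $G$-martingale, applying the statement to both $X$ and $-X$).

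Next I would pass from convergence along the dyadic rationals to the genuine limit $\lim_{t \to T} X_t$ as a filtering limit over all $t \in [0,T)$. The key technical input is the quasi-continuous version of $G$-martingales recorded in the Remark after the definition of $G$-martingale (citing \cite{song2}): every $G$-martingale admits a $c$-quasi-continuous version, so that quasi-sure pathwise regularity is available and the limit along the dense dyadic set agrees $c$-q.s.\ with the limit along arbitrary sequences $t \uparrow T$. With this in hand I would invoke Lemma \ref{lem3.2}: since $(X_{t^n_k})_k \subset L_b^1$ converges in capacity to $X_T$ and is uniformly integrable, $X_T \in L_b^1$ and the convergence is in $L^1$ norm. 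To conclude $X_T \in L_G^1(\Omega_T)$ rather than merely $L^1_b$, I would use that each $X_t$ lies in $L_G^1(\Omega_T)$, that $L_G^1(\Omega_T)$ is complete under $\BE[|\cdot|]$, and that $L^1$-convergence of the approximants forces the limit into $L_G^1(\Omega_T)$; the quasi-continuity of the version then closes the gap between $L^1_b$ and $L^1_G$, using the characterization of $L_G^p(\Omega_T)$ recalled from \cite{Denis} just after the capacity definition.

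Finally I would verify the martingale identity $\BE_s[X_T] = X_s$ for every $s \in [0,T)$. Here I would fix $s$, choose dyadic times $t^n_k \downarrow$ to a point $\ge s$ along the partition (or simply note $s$ can be taken in $\bigcup_n \pi_n$ by density and continuity of conditional expectation), and pass to the limit in the discrete martingale identity $\BE_s[X_{t^n_k}] = X_s$ using continuity of the conditional $G$-expectation $\BE_s[\cdot]$ under $L^1$-convergence, which follows from its $1$-Lipschitz property $|\BE_s[Y] - \BE_s[Z]| \le \BE_s[|Y-Z|]$ together with $\BE[\BE_s[\,\cdot\,]] = \BE[\,\cdot\,]$.

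The main obstacle I anticipate is precisely this last continuity-of-conditional-expectation step and the companion identification $X_T \in L_G^1(\Omega_T)$: in the nonlinear $G$-framework conditional expectation is only sublinear, so one cannot freely interchange limits and $\BE_s$, and the subtle point is that \cite{cohen} works in the larger space $L^1_b$ (no quasi-continuity required), whereas the theorem asserts membership in the smaller $L_G^1(\Omega_T)$. Bridging $L^1_b$ and $L_G^1$ for the limit, and ensuring the quasi-continuous versions are compatible across the whole partition scheme so that the pathwise limit is itself quasi-continuous, is where the real care is needed; the discrete-to-continuous passage itself is otherwise routine.
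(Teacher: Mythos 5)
Your proposal is correct in substance and rests on the same pillars as the paper's proof: restrict the martingale to a discrete sequence of times increasing to $T$, invoke the discrete-time results of \cite{cohen}, and use Lemma \ref{lem3.2} (uniform integrability plus convergence in capacity gives convergence in $\BE[|\cdot|]$-norm) together with the completeness of $L_G^1(\Omega_T)$ to place $X_T$ in $L_G^1(\Omega_T)$. The genuine difference is how the identity $\BE_s[X_T]=X_s$ is obtained. The paper works with the single sequence $T-\frac{T}{n}$, quotes the discrete extension $\BE_{T-\frac{T}{n}}[X_T]=X_{T-\frac{T}{n}}$, and concludes by the tower property $\BE_s[X_T]=\BE_s\BE_{T-\frac{T}{n}}[X_T]=\BE_s[X_{T-\frac{T}{n}}]=X_s$; no interchange of limits and conditional expectation is needed, so the ``main obstacle'' you anticipate does not arise on that route. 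You instead pass to the limit in $\BE_s[X_{t_k}]=X_s$ via the $1$-Lipschitz property of $\BE_s$ under the $L^1$-norm; this two-line estimate is valid once $L^1$-convergence is established, and it actually buys something: the theorems quoted from \cite{cohen} are stated for \emph{sub}martingales, and your parenthetical route to the martingale extension ``by applying the statement to both $X$ and $-X$'' fails in the sublinear setting --- one only gets $\BE_k[X_\infty]\ge X_k$ and $\BE_k[-X_\infty]\ge -X_k$, which do not combine to equality because $\BE_k[-\,\cdot\,]\neq-\BE_k[\,\cdot\,]$ --- whereas your Lipschitz-limit argument proves the needed equality directly and thereby also justifies the paper's first display. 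Two simplifications you could make: a single increasing sequence such as $T-\frac{T}{n}$ suffices, so the nested dense dyadic partitions are unnecessary overhead; and no appeal to quasi-continuous versions of $G$-martingales is needed, since the theorem only concerns limits along sequences increasing to $T$ (any two such sequences interlace into one discrete $G$-martingale, so their q.s.\ limits agree).
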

\begin{proof}
Since $\{X_{T-\ff{T}{n}}\}_{n=1}^\infty$ is a sequence of discrete martingale, we have
$$
\BE_{T-\ff{T}{n}}X_T=X_{T-\ff{T}{n}}.
$$
For any $s\in[0,T)$, there exists a $n\geq 1$, such that
$T-\ff{T}{n}>s$. Moreover,
\begin{align*}
\BE_s X_T&=\BE_s \BE_{T-\ff{T}{n}}X_T \\
 &=\BE_s  X_{T-\ff{T}{n}}\\
 &=X_s,
\end{align*}
where the last step by using the fact that  $(X_s)_{s\in[0,T)}$ is $G$-martingale. This implies that  $(X_s)_{s\in[0,T]}$ is $G$-martingale.
Moreover, the collection $(X_{T-\ff{T}{n}})_{n\geq1}$ is uniformly
integrable and the $X_{T-\ff{T}{n}}$ converge in capacity to $X_T$, then  the $X_t$
converge to $X_T$ in $(\bar{\mathbb{E}}[|\cdot|])$  norm by Lemma \ref{lem3.2}, which proves that $X_T \in  L_G^1(\Omega_T)$.
\end{proof}

To prove Theorem \ref{th1}, we first introduce the construction of coupling by change of measure with multiplicative noise under $G$-setting.
\subsubsection{Construction of the coupling }
In the sequel, we denote %$\frac{1}{\beta}:=\ff{\kappa_2}{\kappa_1^2}\geq|\hat{\sigma}|$ by (H2),
 $\hat{\sigma}=\sigma^{\ast}(\sigma\sigma^{\ast})^{-1}.$
We use the coupling by change of measures as explained in \cite{W1}.
 For $\alpha\in(0, \ff{2\kappa_1^2}{\kappa_2^2})$, let
  \begin{equation}\label{lambda}
   \lambda_t^\alpha=\frac{\ff{2\kappa_1^2}{\kappa_2^2}-\alpha}{K\left(2+K+\frac{2}{\underline{\si}^2}\right)}\left(1-\e^{\underline{\sigma}^2K\left(2+K+\frac{2}{\underline{\si}^2}\right)(t-T)}\right), \ t\in[0,T].
  \end{equation}
  Then $ \lambda_t^\alpha$  is  smooth and strictly positive on $[0,T)$ such that
\begin{equation}\label{alpha}
\ff{2\kappa_1^2}{\kappa_2^2}-K\left(2+K+\frac{2}{\underline{\si}^2}\right){\lambda_t^\alpha}+\frac{1}{\underline{\sigma}^2}(\lambda_t^\alpha)'=\alpha, \ t\in[0,T].
\end{equation}
%Let $X_t$ be the solution of \eqref{0} for $X_0=x$.
 For convenience, we reformulate \eqref{0} as  %for the case of $d=1$
\begin{equation}\label{00}
\d X_t=b(t, X_t) \d t+h(t,X_t)\d \langle B \rangle_t+\sigma(t, X_t) \d B_t, \ X_0=x.
\end{equation}
Consider the equation
\begin{equation}
\begin{cases} \label{1}
 \d Y_t=b(t,Y_t)\d t+h(t,Y_t) \d\langle B \rangle _t+\sigma(t,Y_t)\d B_t+\sigma(t,Y_t)g_t\d \langle B \rangle _t,\\
  \ Y_0=y,  \  t\in(0,T),
\end{cases}\end{equation}
where $g_t :=\frac{1}{\lambda_t^\alpha}\hat{\sigma}(t,X_t) (X_t-Y_t) $.

\subsubsection{Extension of $Y$ to $T$}
 Let $s\in[0,T)$ be fixed. By \eqref{0} and \eqref{1},  $ X_t-Y_t$ satisfies the equation below
\begin{align}\label{ito}
 \d (X_t -Y_t)&=(b(t,X_t)-b(t,Y_t))\d t+(h(t,X_t)-h(t,Y_t)) \d\langle B \rangle _t\\
 &\quad+(\sigma(t,X_t)-\sigma(t,Y_t))\d B_t-\sigma(t,Y_t)g_t\d \langle B \rangle _t\nonumber.
\end{align}
%where $\tilde{b}(t)=b(t,X_t)-b(t,Y_t)$, $\tilde{h}(t)=h(t,X_t)-h(t,Y_t)$, $\tilde{\sigma}(t)=\sigma(t,X_t)-\sigma(t,Y_t)$.
Applying It\^{o}'s formula to  $|X_t-Y_t|^2$, we obtain
\begin{align}\label{qc1}
 \d|X_t-Y_t|^2
 &=2\langle X_t-Y_t, b(t,X_t)-b(t,Y_t)\>\d t+2\langle X_t-Y_t, \sigma(t,X_t)-\sigma(t,Y_t)\rangle \d   {B}_t\\\no
 &\quad+2\langle X_t-Y_t, h(t,X_t)-h(t,Y_t)\rangle \d\langle  {B}\rangle_t
+|\sigma(t,X_t)-\sigma(t,Y_t)|^2\d\langle  {B}\rangle_t\\\no
&\quad-2\langle X_t-Y_t, \sigma(t,Y_t)g_t\>\d \<B\>_t
%\frac{1}{\lambda_t^\alpha}{2\sigma(t,Y_t)\hat{\sigma}(t,X_t)|X_t-Y_t|^2}
\\\no
 &\leq\left(2K+K^2-\frac{2\kappa_1^2}{\lambda_t^\alpha\kappa_2^2}\right)|X_t-Y_t|^2 \d\langle {B}\rangle_t +2K|X_t-Y_t|^2 \d t\\\no
 &\quad+2\langle X_t-Y_t, \sigma(t,X_t)-\sigma(t,Y_t)\rangle \d   {B}_t\\\no
 &\leq\left(2K+\frac{2K}{\underline{\sigma}^2}+K^2-\frac{2\kappa_1^2}{\lambda_t^\alpha\kappa_2^2}\right)|X_t-Y_t|^2 \d\langle  {B}\rangle_t \\\no
 &\quad+2\langle X_t-Y_t, \sigma(t,X_t)-\sigma(t,Y_t)\rangle \d   {B}_t.\no
\end{align}
 Combining with the expression  \eqref{alpha}, we have
\begin{align*}
 \d\frac{|X_t-Y_t|^2}{\lambda_t^\alpha}
&\leq -\frac{|X_t-Y_t|^2}{(\lambda_t^\alpha)^2}\left(\frac{2\kappa_1^2}{ \kappa_2^2}-2K\lambda_t^\alpha-\frac{2K}{\underline{\sigma}^2}\lambda_t^\alpha-K^2\lambda_t^\alpha+\frac{1}{\underline{\sigma}^2}(\lambda_t^\alpha)'\right)
   \d\langle  {B}\rangle_t,\\
  &\quad+\frac{2}{\lambda_t^\alpha}\langle X_t-Y_t, \sigma(t,X_t)-\sigma(t,Y_t)\rangle \d  {B}_t\\
   &= -\frac{\alpha}{(\lambda_t^\alpha)^2}|X_t-Y_t|^2
   \d\langle  {B}\rangle_t+\frac{2}{\lambda_t^\alpha}\langle X_t-Y_t, \sigma(t,X_t)-\sigma(t,Y_t)\rangle \d  {B}_t.
\end{align*}
Thus,
\begin{align}\label{4'}
 \int_0^s\frac{|X_t-Y_t|^2}{(\lambda_t^\alpha)^2} \d\langle  {B}\rangle_t
 &\leq \int_0^s\frac{2}{\alpha\lambda_t^\alpha}\langle X_t-Y_t, \sigma(t,X_t)-\sigma(t,Y_t)\rangle \d  {B}_t \\
 &\quad-\frac{|X_s-Y_s|^2}{\alpha\lambda_s^\alpha}+\frac{|x-y|^2}{\alpha\lambda_0^\alpha}, \ s \in [0,T).\nonumber
\end{align}
Taking expectation $\bar{\mathbb{E}}$ on both sides of \eqref{4'},  we obtain
\begin{equation}\label{2'}
 \bar{\mathbb{E}}\int_0^s\frac{|X_t-Y_t|^2}{(\lambda_t^\alpha)^2} \d\langle  {B}\rangle_t
 \leq\frac{|x-y|^2}{\alpha\lambda_0^\alpha}, \ s \in [0,T).
\end{equation}
Since $X_t, Y_t \in M_G^2([0,T])$, for any $s\in(0,T),$   $g_t1_{[0,s]}(t) \in M_G^2([0,T])$.
Note that, for any $s\in(0,T),$
\begin{equation*}%\label{2'}
 \bar{\mathbb{E}}\int_r^s\frac{|X_t-Y_t|^2}{(\lambda_t^\alpha)^2} \d t
 \leq C_1(s-r),
\end{equation*}
where $C_1$ is a constant.

By  the Monotone Convergence Theorem in  [1] of Theorem \ref{Mono},
 \begin{equation*}
 \bar{\mathbb{E}}\int_r^T\frac{|X_t-Y_t|^2}{(\lambda_t^\alpha)^2} \d t=\lim_{s\rightarrow T}\bar{\mathbb{E}}\int_r^s\frac{|X_t-Y_t|^2}{(\lambda_t^\alpha)^2} \d t
 \leq C_1(T-r).
\end{equation*}
There exists a $\bar{g}\in  M_G^2([0,T]) $ such that $\bar{g}_s=g_s, s\in [0,T).$  In fact,
let    $g_t^n=g_t1_{[0,T-\ff{1}{n}]}(t) $ $\in M_G^2([0,T]),$  then %$T-\ff{1}{n}\uparrow T$,
it holds that
\begin{align*}
 \bar{\mathbb{E}}\int 1_{[0,T]}|\bar{g}_t-g_t^n|^2 \d t&=  \bar{\mathbb{E}}\int1_{(T-\ff{1}{n}, T]}|\bar{g}_t|^2 \d t\\
 &=  \bar{\mathbb{E}}\int1_{(T-\ff{1}{n}, T)}| {g}_t|^2 \d t\\
&\leq \ff{1}{\kappa_1^2} \bar{\mathbb{E}}\int_{T-\ff{1}{n}}^T\frac{|X_t-Y_t|^2}{(\lambda_t^\alpha)^2} \d t\\
&\rightarrow 0, \ n\rightarrow \infty,
\end{align*}
where the last step uses the fact of [2] in Theorem \ref{Mono}.

 Let  $\bar{Y}_t$ solve the following equation
\begin{equation}\label{1'}
\begin{cases}
 \d Y_t=b(t,Y_t)\d t+h(t,Y_t) \d\langle B \rangle _t+\sigma(t,Y_t)\d B_t+\sigma(t,Y_t)\bar{g}_t\d \langle B \rangle _t,\\
  \ Y_0=y,  \ t\in(0,T],
\end{cases}\end{equation}
 Thus, $Y$ can be  extended to $[0,T]$ as $\bar{Y}$. In the sequel, we still use $Y$ and $g$ instead $\bar{Y}$ and $\bar{g}$.

\subsubsection{Several lemmas}
We first prove the following  Young inequality under $G$-expectation framework.
 %Let $(\Omega, \mathcal{H}, \bar{\E}')$ be  general sublinear expectation space, where $\mathcal{H} $  is a vector lattice of real %valued functions defined on $\Omega$, for any constant in
%$\mathcal{H}$,  $|X|\in \mathcal{H},$ if $X\in \mathcal{H}$.
%\begin{thm}{\rm(\cite[Theorem 2.1, p4]{peng1})} There exists a family of linear functionals $\{\E_{\theta}: \theta \in \Theta' \}$ defined on $\mathcal{H} $ such that
%\begin{equation*}
% \bar{\E}'[X]=\sup_{\theta\in\Theta'} \E_\theta[X], \ X\in \mathcal{H}.
%\end{equation*}
%\end{thm}

\begin{lem} \label{Young} {\rm(Young Inequality)} For $g_1, g_2\in L_G^1(\Omega_T)$  with $g_1$, $g_2>0$ and $ {\E}_P[g_1]=1$,   $ \forall \ P \in \mathcal{P},$ % and  $\bar{\E}[g_1]=1$.
then
\begin{equation*}
  \bar{\E}[g_1 g_2]\leq \bar{\E}[g_1 \log g_1]+\log \bar{\E}[\e ^{g_2}],
\end{equation*}
where $\mathcal{P}$ is a weakly compact set that represents $\BE$.
\end{lem}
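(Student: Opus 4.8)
The plan is to reduce this nonlinear (sublinear) inequality to its classical linear counterpart by exploiting the representation $\BE[\cdot]=\sup_{P\in\mathcal{P}}\E_P[\cdot]$ from \eqref{rep} and arguing measure by measure. The decisive structural input is the hypothesis that $\E_P[g_1]=1$ for \emph{every} $P\in\mathcal{P}$ (not merely $\BE[g_1]=1$): this lets me treat $g_1$ as a probability density against all $P$ simultaneously, which is exactly what is needed to make a pointwise argument survive the passage to the supremum.

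First I would record the elementary pointwise Young inequality: for $a>0$ and $b\in\R$,
$$a b\le a\log a-a+\e^{b},$$
obtained by minimizing $b\mapsto \e^{b}-ab$ at $b=\log a$. Applying it with $a=g_1$ and $b=g_2-c$ for an arbitrary constant $c\in\R$ gives, since $g_1,g_2>0$,
$$g_1 g_2-c\,g_1\le g_1\log g_1-g_1+\e^{-c}\,\e^{g_2}.$$
Next, for each fixed $P\in\mathcal{P}$ I take $\E_P$ of this inequality; using $\E_P[g_1]=1$ the linear terms collapse to a constant, yielding
$$\E_P[g_1 g_2]\le \E_P[g_1\log g_1]-1+c+\e^{-c}\,\E_P[\e^{g_2}].$$
Optimizing the right-hand side over $c$ (the minimum is attained at $c=\log\E_P[\e^{g_2}]$, which is legitimate provided $\E_P[\e^{g_2}]<\8$) produces the sharp linear entropy bound $\E_P[g_1 g_2]\le \E_P[g_1\log g_1]+\log\E_P[\e^{g_2}]$. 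I then dominate each term on the right by the sublinear expectation, $\E_P[g_1\log g_1]\le\BE[g_1\log g_1]$ and (since $\log$ is increasing) $\log\E_P[\e^{g_2}]\le\log\BE[\e^{g_2}]$; the resulting bound is uniform in $P$, so taking $\sup_{P\in\mathcal{P}}$ on the left and invoking $\BE[g_1g_2]=\sup_{P}\E_P[g_1 g_2]$ delivers the claimed inequality. (Equivalently one may phrase the per-$P$ step as the Gibbs/Donsker--Varadhan variational formula applied to the density $\d Q_P=g_1\,\d P$, with relative entropy $\E_P[g_1\log g_1]$.)

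The main obstacle is analytic rather than algebraic: I must justify that the representation $\BE[\xi]=\sup_{P\in\mathcal{P}}\E_P[\xi]$ and the termwise manipulations apply to the nonlinear functionals $g_1g_2$, $g_1\log g_1$ and $\e^{g_2}$, which a priori need not lie in $L_G^1(\Omega_T)$, and that all quantities are well defined (allowing the value $+\8$). If $\BE[g_1\log g_1]=+\8$ or $\BE[\e^{g_2}]=+\8$ the asserted inequality is trivial, so I may assume both are finite; finiteness of $\BE[\e^{g_2}]$ is precisely what secures the integrability of $\e^{g_2}$ under each $P$ and validates the optimization in $c$. I would handle the representation for these functionals by truncating $g_1$ and $g_2$, applying the argument above to the bounded approximants, and passing to the limit via the Monotone Convergence Theorem (Theorem \ref{Mono}) together with the extension of $\sup_{P}\E_P[\cdot]$ as an upper expectation, so that the measure-by-measure estimate transfers to the limiting objects.
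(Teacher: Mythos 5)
Your proposal is correct and follows essentially the same route as the paper: apply the classical Young (entropy) inequality under each linear expectation $\E_P$, using the hypothesis $\E_P[g_1]=1$ for every $P\in\mathcal{P}$, then pass to the supremum via the representation $\bar{\E}[\cdot]=\sup_{P\in\mathcal{P}}\E_P[\cdot]$ and the monotonicity of $\log$. Your additional details (deriving the per-measure inequality from the pointwise bound $ab\le a\log a-a+\e^{b}$, and the truncation/integrability discussion) only elaborate steps the paper takes for granted.
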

\beg{proof}
For any $ P \in \mathcal{P},$   ${\E}_P$ is a linear expectation,  it holds that
$$ {\E}_P [g_1 g_2]\leq{\E}_P[g_1 \log g_1]+\log {\E}_P[\e ^{g_2}].$$
Since  $\bar{\E}[X]=\sup_{P\in\mathcal{P}} {\E}_P[X]$, $X\in L_G^1(\Omega_T),$ then
\begin{align*}
\bar{\E} [g_1 g_2]&\leq
\sup_{P\in\mathcal{P}}\left\{{\E}_P[g_1 \log g_1]+\log {\E}_P[\e ^{g_2}]\right\}\\
&\leq
 \bar{\E}[g_1 \log g_1]+\sup_{P\in\mathcal{P}}\left\{\log {\E}_P[\e ^{g_2}]\right\}\\
 &\leq
 \bar{\E}[g_1 \log g_1]+  \log \bar{\E}[\e ^{g_2}],
\end{align*}
where the last step due to the function $\log$ is increasing.
\end{proof}

Let \begin{equation*}
\d \hat{B} _t =\d B_t+ g_t  \d\langle B \rangle _t, \ \ \ 0 \leq t \leq T.
\end{equation*}
Following section 3.2.2, we see that $ {g}\in  M_G^2([0,T]), $ below
we aim to prove
 \begin{equation*}
M_s:=\exp \left\{-\int_0^sg_t{\d}B_t -\frac{1}{2}\int_0^{s} |g_t|^2 \d\langle B \rangle_t \right \},
\end{equation*}
is a uniformly integrable symmetric  $G$-martingale for $s\in[0,T]$.
\begin{lem} \label{lem3.7}
Assume (H1)-(H2).
There holds
\begin{equation}\label{3''}
 \sup_{s\in[0,T)} \bar{\mathbb{E}}[M_s\log M_s]%= \hat{\mathbb{E}}[\log M_s]
  \leq\frac{|x-y|^2}{2\alpha\kappa_1^2\lambda_0^\alpha}.
\end{equation}
Consequently, $M_T:=\lim_{s\uparrow T}M_s$ exists and $\{M_s\}_{s\in[0,T]}$ is a uniformly integrable symmetric  $G$-martingale.
\end{lem}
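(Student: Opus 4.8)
The plan is to prove the two assertions of Lemma \ref{lem3.7} in sequence: first establish the uniform-in-$s$ bound \eqref{3''} on the relative entropy $\bar{\E}[M_s\log M_s]$, and then feed this bound into the continuous-time martingale convergence machinery (Theorem \ref{cts}, via the uniform integrability criterion of Lemma \ref{um}) to conclude that $M_T$ exists and that $\{M_s\}_{s\in[0,T]}$ is a uniformly integrable symmetric $G$-martingale. The first step is the analytic heart of the argument; the second is a packaging step that invokes the earlier theory.

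For the entropy bound, I would start by observing that $g\in M_G^2([0,T])$ with $|g_t|=\frac{1}{\lambda_t^\alpha}|\hat\sigma(t,X_t)||X_t-Y_t|$, and that under (H2) one has $|\hat\sigma|\le \kappa_1^{-1}$, so $|g_t|^2\le \frac{1}{\kappa_1^2}\frac{|X_t-Y_t|^2}{(\lambda_t^\alpha)^2}$. Since for each fixed $s<T$ the process $M$ restricted to $[0,s]$ is a symmetric $G$-martingale (the drift $g\mathbf{1}_{[0,s]}$ is bounded, so $G$-Novikov \eqref{Novikov} of Lemma \ref{lema2.10} holds trivially on $[0,s]$), I can compute $\log M_s=-\int_0^s g_t\,\d B_t-\frac12\int_0^s|g_t|^2\,\d\langle B\rangle_t$ and hence
\begin{align*}
\bar{\E}[M_s\log M_s]
&=\bar{\E}\Big[M_s\Big(-\int_0^s g_t\,\d B_t-\tfrac12\int_0^s|g_t|^2\,\d\langle B\rangle_t\Big)\Big].
\end{align*}
The key device is the change of measure from Lemma \ref{lemma3}: under the expectation $\hat{\E}[\,\cdot\,]=\bar{\E}[\,\cdot\,M_s]$ the process $\hat B_t=B_t+\int_0^t g_r\,\d\langle B\rangle_r$ (matching $\d\hat B_t=\d B_t+g_t\,\d\langle B\rangle_t$ defined just before the lemma) is a $G$-Brownian motion, and $\langle\hat B\rangle=\langle B\rangle$ by Lemma \ref{lem2.1}. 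Rewriting $\int_0^s g_t\,\d B_t=\int_0^s g_t\,\d\hat B_t-\int_0^s|g_t|^2\,\d\langle B\rangle_t$ and noting the $\hat B$-stochastic integral has vanishing $\hat{\E}$-expectation, I expect the cross term to collapse and leave
\begin{equation*}
\bar{\E}[M_s\log M_s]=\tfrac12\,\hat{\E}\Big[\int_0^s|g_t|^2\,\d\langle B\rangle_t\Big]
\le \tfrac{1}{2\kappa_1^2}\,\hat{\E}\Big[\int_0^s \tfrac{|X_t-Y_t|^2}{(\lambda_t^\alpha)^2}\,\d\langle B\rangle_t\Big].
\end{equation*}
Finally I apply the energy estimate \eqref{2'} — which bounds exactly this quantity by $|x-y|^2/(\alpha\lambda_0^\alpha)$ — but under $\hat{\E}$ rather than $\bar{\E}$; since under $\hat{\E}$ the pair $(X,Y)$ solves the coupled system with $\hat B$ playing the role of the driving $G$-Brownian motion, the identical It\^o computation leading to \eqref{4'}–\eqref{2'} goes through verbatim, yielding the constant $\frac{|x-y|^2}{2\alpha\kappa_1^2\lambda_0^\alpha}$ uniformly in $s\in[0,T)$.

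For the convergence conclusion, I take $f(t)=t\log^+ t$ (or any superlinear function), note $\lim_{t\to\infty}t^{-1}f(t)=\infty$, and use the uniform entropy bound \eqref{3''} together with Lemma \ref{um} to deduce that $\{M_s\}_{s\in[0,T)}$ is uniformly integrable. Being a uniformly integrable $G$-martingale on $[0,T)$, Theorem \ref{cts} applies directly: the limit $M_T=\lim_{s\uparrow T}M_s$ exists, lies in $L^1_G(\Omega_T)$, and $\{M_s\}_{s\in[0,T]}$ is a uniformly integrable $G$-martingale; symmetry passes to the limit since each $M_s$ is symmetric. The main obstacle I anticipate is justifying the change-of-measure computation rigorously at the level of $G$-expectations — in particular verifying that the $\hat B$-It\^o integral indeed has zero $\hat{\E}$-mean and that the energy estimate \eqref{2'} transfers to $\hat{\E}$ — because the coupling drift $g_t=\frac{1}{\lambda_t^\alpha}\hat\sigma(t,X_t)(X_t-Y_t)$ depends on both processes and $\lambda_t^\alpha\to 0$ as $t\uparrow T$, so the integrand degenerates near the terminal time and one must keep every estimate uniform in $s$ while staying within the admissible class $M_G^2([0,T])$.
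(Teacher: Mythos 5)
Your overall architecture (entropy bound via change of measure plus the energy estimate, then Lemma \ref{um} and Theorem \ref{cts} for the convergence statement) is the same as the paper's, but there is a genuine gap at exactly the step you yourself flag as the ``main obstacle'': your justification for the change of measure is wrong. You claim that for fixed $s<T$ the drift $g\mathbf{1}_{[0,s]}$ is bounded, so that $G$-Novikov's condition \eqref{Novikov} holds ``trivially'' and $M$ restricted to $[0,s]$ is a symmetric $G$-martingale. But $g_t=\frac{1}{\lambda_t^\alpha}\hat\sigma(t,X_t)(X_t-Y_t)$: fixing $s<T$ only bounds the denominator $\lambda_t^\alpha$ away from zero, while the factor $|X_t-Y_t|$ is an unbounded random variable (the difference of two $G$-SDE solutions), so $g\mathbf{1}_{[0,s]}$ is \emph{not} bounded and Novikov is far from trivial --- exponential integrability of $\int_0^s|g_t|^2\,\d\langle B\rangle_t$ is essentially the content of the later Lemma \ref{lem3.9} and needs its own argument. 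Without first knowing that $M$ is a symmetric $G$-martingale you cannot invoke Lemma \ref{lemma3}, nor assert that the $\hat B$-integral has vanishing $\hat{\E}$-mean; hence the identity $\bar{\E}[M_s\log M_s]=\frac12\hat{\E}\bigl[\int_0^s|g_t|^2\,\d\langle B\rangle_t\bigr]$ and the transfer of the energy estimate \eqref{2'} to $\hat{\E}$ are both unjustified, and the argument is circular as written.

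The paper closes precisely this gap by localization: it introduces auxiliary increasing processes $\hat X,\hat Y$ and stopping times $\hat\tau_n=\inf\{t\in[0,T]:|\hat X_t|+|\hat Y_t|\ge n\}$, proves they are quasi-continuous (Lemma \ref{s3} --- the whole purpose of the appendix), so that $X_{t\wedge\hat\tau_n}$, $Y_{t\wedge\hat\tau_n}$, and hence $g_{t\wedge\hat\tau_n}$, are genuinely bounded. Girsanov (in the form of \cite[Theorem 5.2]{HJPS}) is then applied on $[0,s\wedge\hat\tau_n]$ under $\hat{\mathbb{E}}_n:=\bar{\mathbb{E}}[\,\cdot\,M_{s\wedge\hat\tau_n}]$, the entropy and energy estimates are derived at the stopped level, and the bound for $M_s$ itself is recovered by letting $n\to\infty$ via Fatou's lemma together with the uniform-integrability machinery (Lemmas \ref{um} and \ref{lem3.2}). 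Your concluding step (uniform integrability via $f(t)=t\log^+t$ and Theorem \ref{cts}) is fine once \eqref{3''} is in hand, but to make the first half of your proof correct you must insert this stopping-time localization; merely fixing $s<T$ does not tame $g$.
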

\begin{proof}
Fix $s\in [0,T).$
Applying It\^{o}'s formula to  $|X_t|^2$, we have
\begin{equation*}
|X_t|^2=x^2+\int_0^t\<X_t,b(t,X_t)\>\d t+\int_0^t(\<X_t,h(t,X_t)\>+|\sigma(t,X_t)|^2) \d\langle B \rangle _t
+\int_0^t\<X_t,\sigma(t,X_t)\>\d B_t.
\end{equation*}
Let
\begin{equation*}
\hat{X}_t=x^2+\int_0^t|\<X_t,b(t,X_t)\>|\d t+\int_0^t(|\<X_t,h(t,X_t)\>|+|\sigma(t,X_t)|^2) \d\langle B \rangle _t
 +\int_0^t\<X_t,\sigma(t,X_t)\>\d B_t,
\end{equation*}
and
\begin{align*}
\hat{Y}_t&=y^2+\int_0^t|\<Y_t,b(t,Y_t)\>|\d t+\int_0^t(|\<X_t,h(t,Y_t)-\sigma(t,Y_t)g_t\>| +|\sigma(t,X_t)|^2)\d\langle B \rangle _t\\
&\quad+\int_0^t\<Y_t,\sigma(t,Y_t)\>\d B_t.
\end{align*}
For any $n\geq1,$
let $\hat\tau_n=\inf\{t\in[0,T]| |\hat{X}_t|+|\hat{Y}_t|\geq n\}$. By Lemma \ref{s3}, $\hat\tau_n$ is quasi-continuous, and $X_{t\wedge\hat\tau _n}$, $ Y_{t\wedge\hat\tau_n}$ are bounded,
 which implies $g_{t\wedge\hat\tau_n}$ is bounded.
%Let
 %\begin{equation*}
%M_s^n=\exp \left\{-\int_0^{s\wedge\hat\tau _n}g_t{\d}B_t -\frac{1}{2}\int_0^{s \wedge\hat\tau _n} |g_t|^2 {\d}\langle B \rangle_t\right\},
%\end{equation*}
%\begin{equation*}
%\d \hat{B} _t =\d B_t+ g_t  \d\langle B \rangle _t, \ \ \\ 0 \leq t\leq s \wedge \hat\tau _n.
%\end{equation*}
So for any $n\geq1$ and by the Girsanov theorem in \cite[Theorem 5.2]{HJPS}, $(\hat{B} _t)_{t\in[0,s \wedge\hat\tau _n]}$ is a $G$-Brownian motion under $\hat{\mathbb{E}}_n:=\bar{\mathbb{E}}[\cdot M_{s\wedge\hat\tau _n}]$.

Moreover,   Lemma \ref{lem2.1} implies
$\langle \hat{B}\rangle_t=\langle B \rangle_t. $
Rewrite \eqref{00} and \eqref{1'} as
\begin{eqnarray}\label{Y}
 \d X_t&=&b(t,X_t)\d t+  h(t,X_t)\d\langle \hat{B}\rangle_t+ \sigma(t,X_t)\d  \hat{B}_t-\frac{X_t-Y_t}{\lambda_t^\alpha}\d\langle  \hat{B}\rangle_t, \ X_0=x, \nonumber \\
 \d Y_t&=&b(t,Y_t) \d t+  h(t,Y_t) \d\langle  \hat{B}\rangle_t+ \sigma(t,Y_t)\d  \hat{B}_t, \ Y_0=y.
\end{eqnarray}
Substituting $B_t=\hat{B}_t-\int_0^tg_s\d\<B\>_s$ in the first equation in \eqref{qc1}, using the fact of  $\<\hat{B}\>_t=\<B\>_t$, and
repeating     procedures in  \eqref{qc1}, which yield
\begin{align*}
 \d|X_t-Y_t|^2%&=&2\langle X_t-Y_t, b(t,X_t)-b(t,Y_t)\d t+2\langle X_t-Y_t, \sigma(t,X_t)-\sigma(t,Y_t)\rangle \d  \hat{B}_t\\
 %&&+2\langle X_t-Y_t, h(t,X_t)-h(t,Y_t)\rangle \d\langle  \hat{B}\rangle_t
%-\frac{2|X_t-Y_t|^2}{\lambda_t^\alpha}\d\langle  \hat{B}\rangle_t\\
%&&+|\sigma(t,X_t)-\sigma(t,Y_t)|^2\d\langle  \hat{B}\rangle_t\\
% &\leq&\left(2K+K^2-\frac{2}{\lambda_t^\alpha}\right)|X_t-Y_t|^2 \d\langle  \hat{B}\rangle_t +2K|X_t-Y_t|^2 \d t\\
% &&+2\langle X_t-Y_t, \sigma(t,X_t)-\sigma(t,Y_t)\rangle \d  \hat{B}_t\\
 &\leq\left(2K+\frac{2K}{\underline{\sigma}^2}+K^2-\frac{2}{\lambda_t^\alpha}\right)|X_t-Y_t|^2 \d\langle  \hat{B}\rangle_t \\
 &\quad+2\langle X_t-Y_t, \sigma(t,X_t)-\sigma(t,Y_t)\rangle \d  \hat{B}_t.
\end{align*}
So,
\begin{align*}
 \d\frac{|X_t-Y_t|^2}{\lambda_t^\alpha}
% &=&\frac{2}{\lambda_t^\alpha}\langle X_t-Y_t, \sigma(t,X_t)-\sigma(t,Y_t)\rangle \d  \hat{B}_t-\frac{|X_t-Y_t|^2}{(\lambda_t^\alpha)^2}(\lambda_t^\alpha)'\d t\\
% &&+\frac{|X_t-Y_t|^2}{(\lambda_t^\alpha)^2}\left(2K{\lambda_t^\alpha}+\frac{2K}{\underline{\sigma}^2}\lambda_t^\alpha+K^2{\lambda_t^\alpha}-2\right) \d\langle  \hat{B}\rangle_t
% \\
 &\leq-\frac{|X_t-Y_t|^2}{(\lambda_t^\alpha)^2}\left(2-2K{\lambda_t^\alpha}-\frac{2K}{\underline{\sigma}^2}\lambda_t^\alpha-K^2{\lambda_t^\alpha}+\frac{1}{\underline{\sigma}^2}(\lambda_t^\alpha)'\right) \d\langle \hat{B}\rangle_t\\
 &\quad+\frac{2}{\lambda_t^\alpha}\langle X_t-Y_t, \sigma(t,X_t)-\sigma(t,Y_t)\rangle \d  \hat{B}_t.
  %&=&\frac{2}{\lambda_t^\alpha}\langle X_t-Y_t, \sigma(t,X_t)-\sigma(t,Y_t)\rangle \d  \hat{B}_t
  %-\frac{\alpha}{(\lambda_t^\alpha)^2}|X_t-Y_t|^2 \d\langle  \hat{B}\rangle_t.
\end{align*}
From \eqref{alpha}, we know that
 $$\alpha=\ff{2\kappa_1^2}{\kappa_2^2} -\triangle\leq 2-\triangle,$$
where $\triangle:=2K{\lambda_t^\alpha}+\frac{2K}{\underline{\sigma}^2}\lambda_t^\alpha+K^2{\lambda_t^\alpha}-\frac{1}{\underline{\sigma}^2}(\lambda_t^\alpha)'$.

Therefore,
\begin{align}\label{4}
 \int_0^{s\wedge\hat\tau _n}\frac{|X_t-Y_t|^2}{(\lambda_t^\alpha)^2} \d\langle  \hat{B}\rangle_t
 &\leq \int_0^s\frac{2}{\alpha\lambda_t^\alpha}\langle X_t-Y_t, \sigma(t,X_t)-\sigma(t,Y_t)\rangle \d  \hat{B}_t \\
 &\quad-\frac{|X_s-Y_s|^2}{\alpha\lambda_s^\alpha}+\frac{|x-y|^2}{\alpha\lambda_0^\alpha}, \ s \in [0,T).\nonumber
\end{align}
Since $(\hat{B}_t)_{t\in{[0,s\wedge\hat\tau _n]}}$ is a $G$-Brownian motion under $\hat{\mathbb{E}}$,
taking expectation $\hat{\mathbb{E}}$ on both sides of \eqref{4},  we obtain
\begin{equation}\label{2}
 \hat{\mathbb{E}}\int_0^{s\wedge\hat\tau _n}\frac{|X_t-Y_t|^2}{(\lambda_t^\alpha)^2} \d\langle  \hat{B}\rangle_t
 \leq\frac{|x-y|^2}{\alpha\lambda_0^\alpha}.
\end{equation}
From the definition of $M_t$, $\hat{B} _t$ and Lemma \ref{lem2.1}, it holds that
\begin{align*}
M_{s\wedge\hat\tau _n}&=\exp \left\{-\int_0^{s\wedge\hat\tau _n}g_t{\d}\hat{B}_t
+\frac{1}{2}\int_0^{s\wedge\hat\tau _n} |g_t|^2{\d}\langle {B} \rangle_{t}\right\}\\
&=\exp \left\{-\int_0^{s\wedge\hat\tau _n}g_t{\d}\hat{B}_t
+\frac{1}{2}\int_0^{s\wedge\hat\tau _n}|g_t|^2{\d}\langle \hat{B} \rangle_{t}\right\}, \  c-q.s..
\end{align*}
By (H2), we have
\begin{equation}\label{logM}
\log M_{s\wedge\hat\tau _n}\leq-\int_0^{s\wedge\hat\tau _n}g_t{\d}\hat{B}_t +\frac{1}{2\kappa_1^2}\int_0^{s\wedge\hat\tau _n} \frac{1}{{(\lambda_t^\alpha)}^2}|(X_t-Y_t)|^2{\d}\langle \hat{B} \rangle_{t}, \  c-q.s..
\end{equation}
%Since $(\hat{B}_t)_{t\in{[0,s]}}$ is a $G$-Brownian motion under $\hat{\mathbb{E}}$,
%Moreover, by Lemma \eqref{lem2.1},  $\langle \hat{B}\rangle =\langle B \rangle.$
It follows \eqref{2} that
\begin{equation}\label{3'}
  \bar{\mathbb{E}}[M_{s\wedge\hat\tau _n}\log M_{s\wedge\hat\tau _n}]= \hat{\mathbb{E}}[\log M_{s\wedge\hat\tau _n}]
  \leq\frac{|x-y|^2}{2\alpha\kappa_1^2\lambda_0^\alpha}, \ s\in[0,T).
\end{equation}
Applying  It\^{o}'s formula to
$M_{s\wedge\hat\tau _n}=\e^{u_{s\wedge\hat\tau _n}}$ for the process
$$u_{s\wedge\hat\tau _n}=-\int_0^{s\wedge\hat\tau _n}g_t{\d} {B}_t-\frac{1}{2}\int_0^{s\wedge\hat\tau _n} |g_t|^2{\d}\langle {B} \rangle_{t},$$ we conclude that
$$\d M_{s\wedge\hat\tau _n}=-\int_{0}^{{s\wedge\hat\tau _n}}g_t\d B_t,$$
thus $\{M_t\}_{t\in[0,s\wedge\hat\tau _n]}$ is a   symmetric $G$-martingale.
From \eqref{3'} and Lemma \ref{um}, $\{M_{s\wedge\hat\tau _n}\}_{s\in [0,T)}$ is a uniformly symmetric $G$-martingale,  thus
$\BE M_s=\lim_{n\rightarrow\infty}\BE M_{s\wedge\hat\tau _n}=1$ by Lemma \ref{lem3.2}.
So that $\{M_t\}_{t\in[0,s)}$ is a symmetric $G$-martingale.
%Moreover, Theorem \ref{cts} implies
%$\{M_{s\wedge\hat\tau _n}\}_{s\in [0,T]}$ is a uniformly integrable symmetric %$G$-martingale with
%\begin{equation*}%\label{3''}
  %\sup_{s\in[0,T]}\bar{\mathbb{E}}[M_{s\wedge\hat\tau _n}\log M_{s\wedge\hat\tau _n}]=  %\sup_{s\in[0,T]}\hat{\mathbb{E}}[\log M_{s\wedge\hat\tau _n}]
  %\leq\frac{|x-y|^2}{2\alpha\kappa_1^2\lambda_0^\alpha}.
%\end{equation*}

Let $\hat{\E}=\BE[M_s\cdot], \ s\in [0,T)$. Letting $n\rightarrow \infty$, we have $\hat\tau _n\uparrow T$.
  By   the Fatou lemma,
 \begin{align*}
  \lim_{n\rightarrow \infty}\hat{\mathbb{E}}_n[\log M_{s\wedge\hat\tau _n}]
  &= \lim_{n\rightarrow \infty}\hat{\mathbb{E}}[\log M_{s\wedge\hat\tau _n}]
  = \lim_{n\rightarrow \infty}\hat{\E}[\frac{1}{2}\int_0^{s\wedge\hat\tau _n}|g_t|^2{\d}\langle \hat{B} \rangle_{t}]\\
  &\geq\hat{\E}[\frac{1}{2}\int_0^{s}|g_t|^2{\d}\langle \hat{B} \rangle_{t}]
  =\hat{\E}\log M_s.
\end{align*}
Thus
\begin{equation*}
  \bar{\mathbb{E}}[M_s\log M_s]= \hat{\mathbb{E}}[\log M_s]%=\hat{\mathbb{E}}[\lim_{n\rightarrow\infty}\log M_{s\wedge\hat\tau _n}]=
 % \hat{\mathbb{E}}_n[\lim_{n\rightarrow\infty}\log M_{s\wedge\hat\tau _n}]
 \leq\lim_{n\rightarrow \infty}\hat{\mathbb{E}}_n[\log M_{s\wedge\hat\tau _n}]
  \leq\frac{|x-y|^2}{2\alpha\kappa_1^2\lambda_0^\alpha}, \ s\in[0,T].
\end{equation*}
Using Theorem \ref{cts} once again, $\{M_{s}\}_{s\in [0,T]}$ is a  uniformly symmetric  $G$-martingale.
%To see that $\{M_s\}_{s\in [0,T]}$ is a   $G$-martingale,  let $0 \leq s < t\leq T$.
%\begin{equation*}
%  \bar{\mathbb{E}}[M_t|\Omega_s]=  \bar{\mathbb{E}}[\lim_{n\rightarrow \infty}M_{t\wedge\hat\tau _n}|\Omega_s]=.
%\end{equation*}

\end{proof}

\begin{lem} \label{lem3.8}
Assume (H1)-(H2).
We  have $X_T=Y_T$, c-q.s..
\end{lem}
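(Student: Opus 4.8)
The plan is to turn the a priori bound \eqref{2'} into a statement about the terminal values by exploiting that the coupling function $\lambda_t^\alpha$ degenerates precisely at the terminal time. First I would let $s\uparrow T$ in \eqref{2'}. Since the integrand is nonnegative, $\int_0^s\frac{|X_t-Y_t|^2}{(\lambda_t^\alpha)^2}\d\<B\>_t$ increases to $\int_0^T\frac{|X_t-Y_t|^2}{(\lambda_t^\alpha)^2}\d\<B\>_t$ as $s\uparrow T$, so part (1) of the Monotone Convergence Theorem \ref{Mono} gives
$$\bar{\mathbb{E}}\int_0^T\frac{|X_t-Y_t|^2}{(\lambda_t^\alpha)^2}\d\<B\>_t\leq\frac{|x-y|^2}{\alpha\lambda_0^\alpha}<\infty.$$
Because in the one-dimensional setting $\d\<B\>_t\geq\underline{\sigma}^2\,\d t$ holds $c$-q.s., monotonicity of $\bar{\mathbb{E}}$ then yields the finiteness with Lebesgue measure,
$$\underline{\sigma}^2\,\bar{\mathbb{E}}\int_0^T\frac{|X_t-Y_t|^2}{(\lambda_t^\alpha)^2}\d t\leq\frac{|x-y|^2}{\alpha\lambda_0^\alpha}<\infty.$$

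Next I would transfer this finiteness to each representing measure via the representation $\bar{\mathbb{E}}[\cdot]=\sup_{P\in\mathcal{P}}\E_P[\cdot]$ of Theorem 1.1: for every $P\in\mathcal{P}$ we get $\E_P\int_0^T\frac{|X_t-Y_t|^2}{(\lambda_t^\alpha)^2}\d t<\infty$, hence $\int_0^T\frac{|X_t-Y_t|^2}{(\lambda_t^\alpha)^2}\d t<\infty$ $P$-a.s. The decisive structural input is the behaviour of $\lambda_t^\alpha$ near $T$: from \eqref{lambda} one reads off $\lambda_T^\alpha=0$ together with $(\lambda_t^\alpha)'|_{t=T}=-\underline{\sigma}^2\big(\tfrac{2\kappa_1^2}{\kappa_2^2}-\alpha\big)<0$, so $\lambda_t^\alpha$ vanishes linearly at $T$. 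Consequently $1/(\lambda_t^\alpha)^2$ behaves like a nonintegrable singularity of order $(T-t)^{-2}$, giving $\int_{T-\delta}^T\frac{\d t}{(\lambda_t^\alpha)^2}=\infty$ for every $\delta\in(0,T)$.

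To conclude, I would fix $P\in\mathcal{P}$ and run a pathwise contradiction argument. The paths $t\mapsto X_t-Y_t$ are continuous $P$-a.s. (both $X$ and $\bar Y$ solve $G$-SDEs with continuous coefficients), so $\lim_{t\uparrow T}|X_t-Y_t|=|X_T-Y_T|$ exists $P$-a.s. On the event $\{|X_T-Y_T|>0\}$ one would have $|X_t-Y_t|\geq\tfrac12|X_T-Y_T|>0$ on some interval $(T-\delta,T)$, which forces $\int_{T-\delta}^T\frac{|X_t-Y_t|^2}{(\lambda_t^\alpha)^2}\d t=\infty$, contradicting the $P$-a.s. finiteness established above. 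Hence $X_T=Y_T$ $P$-a.s., and since $P\in\mathcal{P}$ is arbitrary,
$$c\{X_T\neq Y_T\}=\sup_{P\in\mathcal{P}}P\{X_T\neq Y_T\}=0,$$
that is, $X_T=Y_T$ $c$-q.s.

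The main obstacle is not any single estimate but the careful interface between the nonlinear expectation and the pathwise reasoning: one must descend from the $G$-expectation bound to $P$-a.s. finiteness through the representation of $\bar{\mathbb{E}}$, and the coupling is driven by the nonintegrable singularity of $1/(\lambda_t^\alpha)^2$ at $T$ rather than by any contraction at fixed times. Secondary points to verify are the $c$-q.s. inequality $\d\<B\>_t\geq\underline{\sigma}^2\,\d t$ and the path-continuity of $X-Y$ under every $P\in\mathcal{P}$, both of which are standard in the $G$-framework once the solutions are known to lie in $M_G^2([0,T])$.
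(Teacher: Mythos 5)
Your proof is correct, but it follows a genuinely different route from the paper's. The paper proves the lemma under the transformed expectations $\hat{\mathbb{E}}_P=\mathbb{E}_P[M_T\,\cdot\,]$: it invokes the bound \eqref{2} (which rests on Lemma \ref{lem3.7}, i.e.\ on $M$ being a symmetric $G$-martingale, and on the Girsanov transform), introduces the first meeting time $\tau=\inf\{t\in[0,T]:X_t=Y_t\}$, and derives a contradiction on $\{\tau>T\}$ from $\inf_{t\le T}|X_t-Y_t|>0$ together with the same divergence of $\int_0^T(\lambda_t^\alpha)^{-2}\,\d\langle\hat B\rangle_t$ that you exploit; it then concludes that $X_T=Y_T$ under $\hat{\mathbb{E}}$. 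Your argument instead stays entirely under $\bar{\mathbb{E}}$, using the pre-Girsanov estimate \eqref{2'}, monotone convergence (Theorem \ref{Mono}), the representation of $\bar{\mathbb{E}}$ by $\mathcal{P}$, and a pathwise contradiction at the terminal time. This buys three things: (i) the lemma becomes logically independent of Lemma \ref{lem3.7} and of the Girsanov machinery; (ii) you obtain the stated $c$-q.s.\ conclusion directly, whereas the paper's conclusion ``under $\hat{\mathbb{E}}$'' is equivalent to it only because $M_T>0$ q.s.; (iii) you bypass a step the paper leaves implicit, namely that $\tau\le T$ only gives coalescence at \emph{some} time before $T$, and upgrading this to $X_T=Y_T$ requires that the coupled processes stay together after meeting (uniqueness for \eqref{Y} restarted at $\tau$) --- since your contradiction is run at $t=T$ itself via path continuity, no such sticking property is needed. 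Two points you should make explicit to close your write-up: the inequality $\d\langle B\rangle_t\ge\underline{\sigma}^2\,\d t$ q.s.\ follows from the representing family $\mathcal{P}_M$ in Remark \ref{line} (under each $P_h$ one has $\langle B\rangle_t=\int_0^t|h_s|^2\,\d s$ with $|h_s|\ge\underline{\sigma}$), and the continuity of $Y$ up to time $T$ is really continuity of the extension $\bar Y$ solving \eqref{1'} with coefficient $\bar g\in M_G^2([0,T])$, since $Y$ itself is defined by \eqref{1} only on $[0,T)$.
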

\begin{proof}
Let
$$\tau=\inf\{t\in[0,T] \ | \  X_t=Y_t  \}.$$
For any $P\in \mathcal{P}$, define $\hat{\E}_P=\E_{P}[M_T\cdot]$, then $\hat{B}_t$ is a martingale under $\hat{\E}_P.$
If there exists a $\omega\in\Omega$ such that $\tau(\omega)>T$, then
\begin{equation*}
  \inf_{t\in[0,T]}|X_t-Y_t|^2(\omega)>0.
\end{equation*}
So
\begin{equation}\label{contra}
\hat{\mathbb{E}}_P\int_0^T\frac{|X_t-Y_t|^2}{(\lambda_t^\alpha)^2} \d\langle \hat{B}\rangle_t=\infty
\end{equation}
holds on the set $\{\omega|\tau(\omega)>T\}$,  %$\hat{\E}_P$-a.s.,
%Since $\hat{B}_t$ is a $G$-Brownian motion, by \eqref{2}, we know that
%\begin{equation*}
% \hat{\mathbb{E}}\int_0^s\frac{|X_t-Y_t|^2}{(\lambda_t^\alpha)^2} \d\langle  \hat{B} \rangle_t
% \leq\frac{|x-y|^2}{\alpha\lambda_0^\alpha}, \ s \in [0,T),
%\end{equation*}
 which is  a contradiction with \eqref{2}, thus  $\hat{\E}_P$-a.s., $\tau(\omega)\leq T$, then
 $$\hat{\E}_P1_{\{\omega| X_T\neq Y_T\}}=0. $$
  Similar analysis with Lemma \ref{lem2.1}, we have
  $$\hat{\E}1_{\{\omega| X_T \neq Y_T\}}=0.$$
Therefore, $X_T=Y_T$ under $\hat\E$.
\end{proof}

\begin{lem} \label{lem3.9}
Assume (H1)-(H2). Then
\begin{align}\label{aug}
&\sup_{s\in[0,T]} \bar{\mathbb{E}}\left[M_s\exp\left\{ \frac{\alpha^2}{8(\kappa_2-\kappa_1)^2}\int_0^{s}\frac{|X_t-Y_t|^2}{(\lambda_t^\alpha)^2} \d\langle \hat{B}\rangle_t\right\}\right]\\ \nonumber
 &\leq\exp\left\{\frac{\alpha K\left(2+K+\frac{2}{\underline{\si}^2}\right)|x-y|^2}{4(\kappa_2-\kappa_1)^2(\ff{2\kappa_1^2}{\kappa_2^2}-\alpha)\left(1-\e^{-\underline{\sigma}^2K\left(2+K+\frac{2}{\underline{\si}^2}\right)T}\right)}\right\}.
\end{align}
Consequently,
\begin{equation*}
\sup_{s\in[0,T]}\bar{\mathbb{E}}(M_s)^{1+a}
\leq\exp\left\{\frac{\alpha K\left(2+K+\frac{2}{\underline{\si}^2}\right)(\alpha {\kappa_1}+2(\kappa_2-\kappa_1))|x-y|^2}{4(\kappa_2-\kappa_1)^2(\ff{2\kappa_1^2}{\kappa_2^2}-\alpha)(2\alpha {\kappa_1}+2(\kappa_2-\kappa_1))(1-\e^{-\underline{\sigma}^2K\left(2+K+\frac{2}{\underline{\si}^2}\right)T})}\right\}
\end{equation*}
holds for
$$a=\frac{\alpha^2\kappa_1^2}{4(\kappa_2-\kappa_1)^2+4\alpha (\kappa_2-\kappa_1){\kappa_1}}.$$

\end{lem}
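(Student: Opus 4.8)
The plan is to prove \eqref{aug} first and then read off the $L^{1+a}$-bound on $M_s$ as a consequence, both ultimately resting on the one-sided estimate \eqref{4}. Throughout I work under $\hat{\mathbb{E}}=\bar{\mathbb{E}}[M_s\,\cdot\,]$, where $\hat B$ is a $G$-Brownian motion with $\langle\hat B\rangle=\langle B\rangle$, and I abbreviate $I_s=\int_0^s\frac{|X_t-Y_t|^2}{(\lambda_t^\alpha)^2}\,\d\langle\hat B\rangle_t$ and $N_s=\int_0^s\frac{2}{\lambda_t^\alpha}\langle X_t-Y_t,\sigma(t,X_t)-\sigma(t,Y_t)\rangle\,\d\hat B_t$. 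Since $\bar{\mathbb{E}}[M_s\exp\{cI_s\}]=\hat{\mathbb{E}}[\exp\{cI_s\}]$ with $c=\frac{\alpha^2}{8(\kappa_2-\kappa_1)^2}$, the whole of \eqref{aug} reduces to controlling $\hat{\mathbb{E}}[\exp\{cI_s\}]$.

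For this I would use \eqref{4} in the form $\alpha I_s+\frac{|X_s-Y_s|^2}{\lambda_s^\alpha}\le\frac{|x-y|^2}{\lambda_0^\alpha}+N_s$, so that $N_s\ge\alpha I_s+\frac{|X_s-Y_s|^2}{\lambda_s^\alpha}-\frac{|x-y|^2}{\lambda_0^\alpha}$; and since $\sigma$ takes values in $[\kappa_1,\kappa_2]$ by (H2), one has $|\sigma(t,X_t)-\sigma(t,Y_t)|\le\kappa_2-\kappa_1$, whence $\langle N\rangle_s\le 4(\kappa_2-\kappa_1)^2 I_s$. The key is then a self-improving estimate: for $\lambda>0$, combining these two facts and discarding the nonnegative term $\lambda\frac{|X_s-Y_s|^2}{\lambda_s^\alpha}$ gives
\[
\big(\lambda\alpha-2\lambda^2(\kappa_2-\kappa_1)^2\big)\,I_s\le\lambda\frac{|x-y|^2}{\lambda_0^\alpha}+\Big(\lambda N_s-\tfrac{\lambda^2}{2}\langle N\rangle_s\Big).
\]
Choosing $\lambda=\frac{\alpha}{4(\kappa_2-\kappa_1)^2}$ makes the coefficient on the left equal exactly $c$ (this is the maximal value of $\lambda\alpha-2\lambda^2(\kappa_2-\kappa_1)^2$), so that $\exp\{cI_s\}\le\exp\{\lambda\frac{|x-y|^2}{\lambda_0^\alpha}\}\,\mathcal E_s^\lambda$, where $\mathcal E_s^\lambda=\exp\{\lambda N_s-\frac{\lambda^2}{2}\langle N\rangle_s\}$ is the Dol\'eans exponential of $\lambda N$. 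Taking $\hat{\mathbb{E}}$ and using $\hat{\mathbb{E}}[\mathcal E_s^\lambda]\le1$ gives $\hat{\mathbb{E}}[\exp\{cI_s\}]\le\exp\{\frac{\alpha}{4(\kappa_2-\kappa_1)^2}\frac{|x-y|^2}{\lambda_0^\alpha}\}$; substituting the value of $\lambda_0^\alpha$ from \eqref{lambda} reproduces the right-hand side of \eqref{aug}.

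For the consequence, I would write $\bar{\mathbb{E}}[M_s^{1+a}]=\hat{\mathbb{E}}[M_s^{a}]$ and express $M_s^{a}$ under $\hat{\mathbb{E}}$ through $\log M_s=-R_s+\frac12 Q_s$, with $R_s=\int_0^s g_t\,\d\hat B_t$ and $Q_s=\int_0^s|g_t|^2\,\d\langle\hat B\rangle_t=\langle R\rangle_s\le\frac{1}{\kappa_1^2}I_s$ by (H2); thus $M_s^{a}=\exp\{-aR_s+\frac a2 Q_s\}$. Applying H\"older's inequality under the sublinear expectation $\hat{\mathbb{E}}$ with the split $-aR_s+\frac a2 Q_s=\big(-aR_s-\frac{a^2 p}{2}Q_s\big)+\big(\frac a2+\frac{a^2 p}{2}\big)Q_s$, the first block raised to the power $p$ is the Dol\'eans exponential of $-apR$, whose $\hat{\mathbb{E}}$ is at most $1$, while the second block is bounded via $Q_s\le\frac{1}{\kappa_1^2}I_s$ by $\exp\{\frac{q}{\kappa_1^2}(\frac a2+\frac{a^2p}{2})I_s\}$. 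The role of the specific value of $a$ is exactly this: with the conjugate exponents $p=2+\frac{2(\kappa_2-\kappa_1)}{\alpha\kappa_1}$ and $\frac1q=\frac{\alpha\kappa_1+2(\kappa_2-\kappa_1)}{2\alpha\kappa_1+2(\kappa_2-\kappa_1)}$ one computes $ap=\frac{\alpha\kappa_1}{2(\kappa_2-\kappa_1)}$ and hence $\frac{q}{\kappa_1^2}(\frac a2+\frac{a^2p}{2})=c$ identically, so the second factor is controlled by $(\hat{\mathbb{E}}[\exp\{cI_s\}])^{1/q}$, i.e. by the right-hand side of \eqref{aug} raised to the power $1/q$, which is precisely the stated bound.

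The two pieces of algebra (the optimization $\lambda=\frac{\alpha}{4(\kappa_2-\kappa_1)^2}$ and the identity $\frac{q}{\kappa_1^2}(\frac a2+\frac{a^2p}{2})=c$) are forced by the statement and are routine. The genuinely delicate step will be justifying $\hat{\mathbb{E}}[\mathcal E_s^\lambda]\le1$ and $\hat{\mathbb{E}}[\mathcal E_s^{(ap)}]\le1$ for the Dol\'eans exponentials of $\lambda N$ and $-apR$: since the paper deliberately avoids $G$-Novikov's condition, these supermartingale bounds must be obtained exactly as in Lemma \ref{lem3.7}, by stopping at $\hat\tau_n$ where the integrands are bounded, using that the stopped exponential is a symmetric $G$-martingale with unit $\hat{\mathbb{E}}$, and then letting $n\to\infty$ through Fatou's lemma together with the uniform-integrability and continuous-time convergence results (Lemma \ref{lem3.2}, Theorem \ref{cts}). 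The same localization is needed to upgrade \eqref{4} from $s\wedge\hat\tau_n$ to the full interval $[0,s]$ before taking expectations.
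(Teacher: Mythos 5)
Your proposal is correct and follows essentially the same route as the paper's proof: the same localization by quasi-continuous stopping times, the same use of \eqref{4} together with $|\sigma(t,X_t)-\sigma(t,Y_t)|\le \kappa_2-\kappa_1$ and the supermartingale property of the Dol\'eans exponential for part one, and, for the consequence, the identical H\"older split of $-aR_s+\frac{a}{2}Q_s$ with the same conjugate exponents (your $p=2+\frac{2(\kappa_2-\kappa_1)}{\alpha\kappa_1}$ is exactly the paper's $q=1+\sqrt{1+a^{-1}}$, and your $1/q$ is the paper's $(q-1)/q$). The only cosmetic difference is in part one, where the paper arrives at the constant $\frac{\alpha^2}{8(\kappa_2-\kappa_1)^2}$ via Cauchy--Schwarz and a self-improving inequality of the form $F\le e^{C}F^{1/2}$ (valid after localization makes $F$ finite), rather than your direct pathwise linearization $\lambda N_s-\frac{\lambda^2}{2}\langle N\rangle_s$ with the optimal $\lambda=\frac{\alpha}{4(\kappa_2-\kappa_1)^2}$; both yield the same bound.
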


\begin{proof}
%Let
 %\begin{equation*}
%M_T=\exp \left\{-\int_0^Tg_t{\d}B_t -\frac{1}{2}\int_0^T |g_t|^2 {\d}\langle B \rangle_t\right\}.
%\end{equation*}
Let $\tau_m=\inf\{t\in[0,T]\  | \ \int_0^t (\frac{|X_s-Y_s|^2}{(\lambda_s^\alpha)^2}+1  )\d\langle  {B}\rangle_s\geq m\}$.
  Applying Lemma \ref{s3} for  processess $Z_s=0, \eta_s=0,$ and $\zeta_s=\frac{|X_s-Y_s|^2}{(\lambda_s^\alpha)^2}+1$, we know that $\tau_m$ is quasi-continuous.
From \eqref{2'}, we know that $\lim_{m\to\infty}\tau_m=T$.
  By \eqref{4'}, (H2), and Lemma \ref{s4}-\ref{s5}, for some $\delta>0$, we have
\begin{align*}
&\bar{\mathbb{E}}\exp\left\{ \delta\int_0^{s\wedge \tau_m}\frac{|X_t-Y_t|^2}{(\lambda_t^\alpha)^2} \d\langle {B}\rangle_t\right\}\\
 &\leq \bar{\mathbb{E}}\exp\Bigg\{\frac{\delta|x-y|^2}{\alpha\lambda_0^\alpha}
 +\frac{2\delta}{\alpha}\int_0^{s\wedge \tau_m}\frac{1}{\lambda_t^\alpha}\langle X_t-Y_t, \sigma(t,X_t)-\sigma(t,Y_t)\rangle \d {B}_t\Bigg\}\\
 &\leq\exp\left\{\frac{\delta|x-y|^2}{\alpha\lambda_0^\alpha}\right\} \bar{\mathbb{E}}
 \left(\exp\left\{\frac{2(\kappa_2-\kappa_1)\delta}{\alpha}\int_0^{s\wedge \tau_m}\frac{1}{\lambda_t^\alpha} |X_t-Y_t|{\d}{B}_t\right\}\right) \\
  %&=&\exp\left\{\frac{\delta|x-y|^2}{\alpha\lambda_0^\alpha} \right\}\hat{\mathbb{E}}
 %\Bigg(\exp\Bigg\{\frac{4\delta C}{\alpha}\int_0^{s\wedge \tau_m}\frac{1}{\lambda_t^\alpha}| X_t-Y_t|{\d}\hat{B}^n_t\\
 %&&-\frac{8\delta^2C^2}{\alpha^2}\int_0^{s\wedge \tau_m}\frac{1}{(\lambda_t^\alpha)^2}| X_t-Y_t|^2 d \langle\hat{B}^n\rangle_t
 %+\frac{8\delta^2C^2}{\alpha^2}\int_0^{s\wedge \tau_m}\frac{1}{(\lambda_t^\alpha)^2}| X_t-Y_t|^2 \d \langle\hat{B}^n\rangle_t
 %\Bigg\}\Bigg)^{\frac{1}{2}}\\
  &=\exp\frac{\delta|x-y|^2}{\alpha\lambda_0^\alpha} \bar{\mathbb{E}}
 \Bigg(\frac{8\delta^2(\kappa_2-\kappa_1)^2}{\alpha^2}\int_0^{s\wedge \tau_m}\frac{1}{(\lambda_t^\alpha)^2}| X_t-Y_t|^2 \d \langle{B}\rangle_t
 \Bigg\}\Bigg)^{\frac{1}{2}}.
\end{align*}
Taking $\delta=\frac{\alpha^2}{8(\kappa_2-\kappa_1)^2}$, we arrive at
\begin{align*}
&\bar{\mathbb{E}}\exp\left\{ \frac{\alpha^2}{8(\kappa_2-\kappa_1)^2}\int_0^{s\wedge \tau_m}\frac{|X_t-Y_t|^2}{(\lambda_t^\alpha)^2} \d\langle {B}\rangle_t\right\}\\
 & \leq\exp\left\{\frac{\alpha K\left(2+K+\frac{2}{\underline{\si}^2}\right)|x-y|^2}{4(\kappa_2-\kappa_1)(\ff{2\kappa_1^2}{\kappa_2^2}-\alpha)\left(1-\e^{-\underline{\sigma}^2K\left(2+K+\frac{2}{\underline{\si}^2}\right)T}\right)}\right\}.
\end{align*}
Letting $m\to \infty$, this implies that
%\begin{eqnarray}\label{e}
%&&\sup_{s\in[0,T]}\bar{\mathbb{E}}\left[M_s\exp\left\{ \frac{\alpha^2}{8C^2}\int_0^{s}\frac{|X_t-Y_t|^2}{(\lambda_t^\alpha)^2} \d\langle \hat{B}^n\rangle_t\right\}\right]\\ \nonumber
%&&\bar{\mathbb{E}}\exp\left\{ \frac{\alpha^2}{8(\kappa_2-\kappa_1)^2}\int_0^{T}\frac{|X_t-Y_t|^2}{(\lambda_t^\alpha)^2} \d\langle {B}\rangle_t\right\}\\ \nonumber
%  &\leq&\exp\left\{\frac{\alpha %K\left(2+K+\frac{2}{\underline{\si}^2}\right)|x-y|^2}{4(\kappa_2-\kappa_1)^2(2\ff{\kappa_1^2}{\kappa_2^2}-\alpha)\left(1-\e^{-\underline{\sigma}^2K\left(2+K+\frac{2}{\underline{\si}^2}\right)T}\right)}\right\}.
%\end{eqnarray}
%This implies that
\begin{align}\label{eT}
 &\bar{\mathbb{E}}\exp\left\{ \frac{\alpha^2}{8(\kappa_2-\kappa_1)^2}\int_0^{s}\frac{|X_t-Y_t|^2}{(\lambda_t^\alpha)^2} \d\langle {B}\rangle_t\right\}\\
 & \leq\exp\left\{\frac{\alpha K\left(2+K+\frac{2}{\underline{\si}^2}\right)|x-y|^2}{4(\kappa_2-\kappa_1)^2(\ff{2\kappa_1^2}{\kappa_2^2}-\alpha)\left(1-\e^{-\underline{\sigma}^2K\left(2+K+\frac{2}{\underline{\si}^2}\right)T}\right)}\right\},\nonumber
\end{align}
which is \eqref{aug}.

%where $g_t^n=g_t1_{[0,\pi_n]}(t) \in M_G^2([0,T]).$
%By \eqref{m}, we know that $\left |\hat{\mathbb{E}}[\log M_T]- \hat{\mathbb{E}}[\log M_T^{n}]\right|\rightarrow 0$.
%In fact, by \eqref{conv}, it holds that
%$$\hat{\mathbb{E}} \int_0^s\left |g_t-g_t^n\right|^2{\d} t\rightarrow 0.$$
%Therefore, by \eqref{B'}  and Lemma \eqref{B}, we obtain
%\begin{eqnarray*}
%  &&\left |\hat{\mathbb{E}}[\log M_T]- \hat{\mathbb{E}}[\log M_{\pi_n}]\right|\\
%  &\leq & \hat{\mathbb{E}}[\left |\log M_T- \log M_{\pi_n}\right|]\\
%  &= & \hat{\mathbb{E}}\left[\left |-\int_0^T(g_t-g_t^n){\d}B_t -\frac{1}{2}\int_0^T (g_t^2-|g_t^n|^2){\d}\langle B \rangle_t\right|\right]\\
 %  &= & \hat{\mathbb{E}}\left[\left |-\int_0^T(g_t-g_t^n){\d}\hat{B}_t+\int_0^T g_t(g_t-g_t^n){\d} \langle{B}\rangle_t -\frac{1}{2}\int_0^T (g_t^2-|g_t^n|^2){\d}\langle B \rangle_t\right|\right]\\
% &= & \hat{\mathbb{E}}\left[\left |-\int_0^T(g_t-g_t^n){\d}\hat{B}_t+\frac{1}{2}\int_0^T |g_t-g_t^n|^2{\d}\langle \hat{B} \rangle_t\right|\right]\\
% &\leq & \hat{\mathbb{E}}\left[\left |-\int_0^T(g_t-g_t^n){\d}\hat{B}_t\right|\right]+\hat{\mathbb{E}}\left[ \frac{1}{2}\int_0^T |g_t-g_t^n|^2{\d}\langle \hat{B} \rangle_t \right]\\
% &\leq & C_1\bar{\sigma} \hat{\mathbb{E}}\left[\int_0^T\left |g_t-g_t^n\right|^2{\d} t\right]^{\ff{1}{2}}+ \frac{\bar{\sigma}^2}{2}\hat{\mathbb{E}}\left[ \int_0^T |g_t-g_t^n|^2{\d} t \right]\rightarrow 0.
% \end{eqnarray*}
% So,
%\begin{equation}\label{3}
%\bar{\mathbb{E}}[M_T\log M_T]= \hat{\mathbb{E}}[\log M_T]=\lim_{n\to\infty} \hat{\mathbb{E}}[\log M_T^{ n}].
% \end{equation}
Next,
let $\tilde{\tau}_n=\inf\{t\in [0,T]\ | \int_0^t(\frac{|X_s-Y_s|^2}{(\lambda_s^\alpha)^2}+1 )\d\langle \hat{B}\rangle_s\geq n\}$, similar with $\tau_m$, $\tilde{\tau}_n$ is quasi-continuous. From \eqref{2}, we know that $\lim_{n\to\infty}\tilde{\tau}_n= T$.
 Similar with the process of deducing in    \eqref{eT}, we have
\begin{align}\label{e'}
& \bar{\mathbb{E}}\left[M_s\exp\left\{ \frac{\alpha^2}{8(\kappa_2-\kappa_1)^2}\int_0^{s}\frac{|X_t-Y_t|^2}{(\lambda_t^\alpha)^2} \d\langle \hat{B}\rangle_t\right\}\right]\\ \nonumber
&= \hat{\mathbb{E}}\exp\left\{ \frac{\alpha^2}{8(\kappa_2-\kappa_1)^2}\int_0^{s}\frac{|X_t-Y_t|^2}{(\lambda_t^\alpha)^2} \d\langle \hat{B}\rangle_t\right\}\\ \nonumber
  &\leq\exp\left\{\frac{\alpha K\left(2+K+\frac{2}{\underline{\si}^2}\right)|x-y|^2}{4(\kappa_2-\kappa_1)^2(\ff{2\kappa_1^2}{\kappa_2^2}-\alpha)\left(1-\e^{-\underline{\sigma}^2K\left(2+K+\frac{2}{\underline{\si}^2}\right)T}\right)}\right\}.
\end{align}
Moreover,
\begin{align*}
&\bar{\mathbb{E}}(M_s)^{1+a}=\hat{\mathbb{E}}(M_s)^a\\
&=\hat{\mathbb{E}}\exp \Bigg\{-a\int_0^sg_t{\d}\hat{B}_t +\frac{a}{2}\int_0^s |g_t|^2{d}\langle \hat{B}\rangle_t\Bigg\}\\
&=\hat{\mathbb{E}}\exp \Bigg\{-a\int_0^sg_t{\d}\hat{B}_t -\frac{a^2q}{2}\int_0^s |g_t|^2{d}\langle \hat{B}\rangle_t+\frac{a(aq+1)}{2}\int_0^s |g_t|^2{d}\langle \hat{B}\rangle_t\Bigg\}\\
&\leq\Bigg(\hat{\mathbb{E}}\exp \Bigg\{-aq\int_0^sg_t{\d}\hat{B}_t-\frac{a^2q^2}{2}\int_0^s |g_t|^2{d}\langle \hat{B}\rangle_t\Bigg\}\Bigg)^{\frac{1}{q}}\\
&\quad\times\Bigg(\hat{\mathbb{E}}\exp \Bigg\{\frac{aq(aq+1)}{2(q-1)}\int_0^s |g_t|^2{d}\langle \hat{B}\rangle_t \Bigg\}\Bigg)^{\frac{q-1}{q}}\\
&=\Bigg(\hat{\mathbb{E}}\exp \Bigg\{\frac{aq(aq+1)}{2(q-1)}\int_0^s |g_t|^2{d}\langle \hat{B}\rangle_t \Bigg\}\Bigg)^{\frac{q-1}{q}}.
\end{align*}
From (H2), we have
\begin{equation}\label{ast}
\bar{\mathbb{E}}(M_s)^{1+a}
\leq\Bigg(\hat{\mathbb{E}}\exp \Bigg\{\frac{aq(aq+1)}{2\kappa_1^2(q-1)}\int_0^s \frac{1}{{(\lambda_t^\alpha)}^2}| X_t-Y_t|^2{d}\langle \hat{B}\rangle_t \Bigg\}\Bigg)^{\frac{q-1}{q}}.
\end{equation}
Taking $q=1+\sqrt{1+a^{-1}}$, it holds that
\begin{align}\label{ast'}
\frac{aq(aq+1)}{2\kappa_1^2(q-1)}&=\frac{(a+\sqrt{a(a+1)})(a+1+\sqrt{a(a+1)})}{2\kappa_1^2\sqrt{1+a^{-1}}}\\ \nonumber
&=\frac{(a+\sqrt{a(a+1)})^2}{2\kappa_1^2}\\ \nonumber
&=\frac{\alpha^2}{8(\kappa_2-\kappa_1)^2}.\nonumber
\end{align}
Then,
\begin{equation*}
  \frac{q-1}{q}=\frac{\sqrt{1+a^{-1}}}{1+\sqrt{1+a^{-1}}}=\frac{\alpha {\kappa_1}+2(\kappa_2-\kappa_1)}{2\alpha {\kappa_1}+2(\kappa_2-\kappa_1)}.
\end{equation*}
Therefore, by recalling the expressions  \eqref{e'} -- \eqref{ast'}, we get
\begin{equation}\label{M1}
\bar{\mathbb{E}}(M_s)^{1+a}
\leq\exp\left\{\frac{\alpha K\left(2+K+\frac{2}{\underline{\si}^2}\right)(\alpha {\kappa_1}+2(\kappa_2-\kappa_1))|x-y|^2}{4(\kappa_2-\kappa_1)^2(\ff{2\kappa_1^2}{\kappa_2^2}-\alpha)(2\alpha {\kappa_1}+2(\kappa_2-\kappa_1))(1-\e^{-\underline{\sigma}^2K\left(2+K+\frac{2}{\underline{\si}^2}\right)T})}\right\},
\end{equation}
this completes the proof.
%Moreover,
%\begin{eqnarray*}
%\bar{\mathbb{E}}M_s^{1+a}&=&\hat{\mathbb{E}}M_s^a\\
%&=&\hat{\mathbb{E}}\exp \Bigg\{-a\int_0^sg_t{\d}\hat{B}_t +\frac{a}{2}\int_0^s |g_t|^2{d}\langle \hat{B} \rangle_t\Bigg\}\\
%&=&\hat{\mathbb{E}}\exp \Bigg\{-a\int_0^sg_t{\d}\hat{B}_t -\frac{a^2q}{2}\int_0^s |g_t|^2{d}\langle \hat{B} \rangle_t+\frac{a(aq+1)}{2}\int_0^s |g_t|^2{d}\langle \hat{B} \rangle_t\Bigg\}\\
%&\leq&\Bigg(\hat{\mathbb{E}}\exp \Bigg\{-aq\int_0^sg_t{\d}\hat{B}_t -\frac{a^2q^2}{2}\int_0^s |g_t|^2{d}\langle \hat{B} \rangle_t\Bigg\}\Bigg)^{\frac{1}{q}}\\
%&&\times\Bigg(\hat{\mathbb{E}}\exp \Bigg\{\frac{aq(aq+1)}{2(q-1)}\int_0^s |g_t|^2{d}\langle \hat{B} \rangle_t \Bigg\}\Bigg)^{\frac{q-1}{q}}\\
%&=&\Bigg(\hat{\mathbb{E}}\exp \Bigg\{\frac{aq(aq+1)}{2(q-1)}\int_0^s |g_t|^2{d}\langle \hat{B} \rangle_t \Bigg\}\Bigg)^{\frac{q-1}{q}},
%\end{eqnarray*}
%where  $a=\frac{\alpha^2\kappa_1}{4C^2+4\alpha C\sqrt{\kappa_1}}$, $q>1.$

\end{proof}

\subsubsection{Proof to Theorem \ref{th1}}
\begin{itemize}
  \item [(1)] Lemma \ref{lem3.7} ensures that under $\hat \E:=\bar{\mathbb{E}}[\cdot M_{T}]$, $\{\hat{B}_t\}_{t\in[0,T]}$ is  a $G$-Brownian motion, and
  \begin{equation*}
  \bar{\mathbb{E}}[M_T\log M_T]
  \leq\frac{|x-y|^2}{2\alpha\kappa_1^2\lambda_0^\alpha}.
\end{equation*}
Then by \eqref{00} and \eqref{Y}, the coupling $(X_t, Y_t)$ is well constructed under $\hat\E$ for $t\in[0,T]$. Moreover, due to Lemma \ref{lem3.8}, $X_T=Y_T$ holds $\hat\E$-q.s., which fits well the requirement of coupling by
change of measure.
Since for all $P\in \mathcal{P}$, ${\E}_P[M_T]=1$,  by Young's inequality in Lemma \ref{Young}, %  and \eqref{3'},
 for any $f\in C_b^+(\R)$,
we obtain
\begin{align*}
\bar{P}_T \log f(y)&=\bar{\mathbb{E}}[\log f(X_T^y)]=\hat{\mathbb{E}}[\log f(Y_T^y)]=\hat{\mathbb{E}}[\log f(X_T^x)]=\bar{\mathbb{E}}[M_T\log f(X_T^x)]\\
&\leq \log\bar{\mathbb{E}}[f(X_T^x)]+\bar{\mathbb{E}}[M_T\log M_T]\\
&=\log\bar{P}_Tf(x)+ \bar{\mathbb{E}}[M_T\log M_T]\\
&\leq\log\bar{P}_Tf(x)+\frac{|x-y|^2}{2\alpha\kappa_1^2\lambda_0^\alpha}\\
&=\log\bar{P}_Tf(x)+\frac{K\left(2+K+\frac{2}{\underline{\si}^2}\right)|x-y|^2}{2\alpha\kappa_1^2(\ff{2\kappa_1^2}{\kappa_2^2}-\alpha)(1-\e^{-\underline{\sigma}^2K\left(2+K+\frac{2}{\underline{\si}^2}\right)T})}.
\end{align*}
%Since in (H2) $0<\ff{\kappa_2}{\kappa_1}<\ff{1+\sqrt{5}}{ 2},$  then  $\ff{2(\kappa_2-\kappa_1)\kappa_2}{\kappa_1^2}<\ff{ 2\kappa_1^2}{\kappa_2^2} $.
For $\alpha\in(0,\ff{2 \kappa_1^2}{\kappa_2^2}),$  %$\ff{2C}{\kappa_1}=\ff{2(\kappa_2-\kappa_1)\kappa_2}{\kappa_1^2}<1$,
   taking $\alpha=\ff{ \kappa_1^2}{\kappa_2^2} $,   (1) of Theorem \ref{th1} holds.
 \item [(2)]

%From (H2), we have
%\begin{equation*}
%\bar{\mathbb{E}}M_s^{1+a}
%\leq\Bigg(\hat{\mathbb{E}}\exp \Bigg\{\frac{aq(aq+1)}{2\kappa_1(q-1)}\int_0^s \frac{1}{{(\lambda_t^\alpha)}^2}| X_t-Y_t|^2{d}\langle \hat{B} \rangle_t \Bigg\}\Bigg)^{\frac{q-1}{q}}.
%\end{equation*}
%Taking $q=1+\sqrt{1+a^{-1}}$, it holds that
%\begin{eqnarray*}
%\frac{aq(aq+1)}{2\kappa_1(q-1)}&=&\frac{(a+\sqrt{a(a+1)})(a+1+\sqrt{a(a+1)})}{2\kappa_1\sqrt{1+a^{-1}}}\\
%&=&\frac{(a+\sqrt{a(a+1)})^2}{2\kappa_1}\\
%&=&\frac{\alpha^2}{8C^2}.
%\end{eqnarray*}
%Then,
%\begin{equation*}
%  \frac{q-1}{q}=\frac{\sqrt{1+a^{-1}}}{1+\sqrt{1+a^{-1}}}=\frac{\alpha\sqrt{\kappa_1}+2C}{2\alpha\sqrt{\kappa_1}+2C}
%\end{equation*}
%Therefore,
%\begin{equation*}
%\bar{\mathbb{E}}M_s^{1+a}
%\leq\exp\left\{\frac{\alpha K(\alpha\sqrt{\kappa_1}+2C)|x-y|^2}{4C^2(2\ff{\kappa_1^2}{\kappa_2^2}-\alpha)(2\alpha\sqrt{\kappa_1}+2C)(1-\e^{-\underline{\sigma}^2K\left(2+K+\frac{2}{\underline{\si}^2}\right)T})}\right\}.
%\end{equation*}
Taking $\alpha=\frac{2(\kappa_2-\kappa_1)}{ {\kappa_1}(\sqrt{p}-1)} $ in \eqref{M1} which is in $(0,\ff{2\kappa_1^2}{\kappa_2^2})$
 for $p>(1+\ff{\kappa_2^3-\kappa_1\kappa_2^2}{\kappa_1^3})^2$,
 we have $\frac{p}{p-1}=1+a$, by Lemma \ref{lem3.9}, this leads to
\begin{align*}
&(\bar{\mathbb{E}}M_T^{\frac{p}{p-1}})^{p-1}=(\bar{\mathbb{E}}M_T^{1+a})^{p-1}=(\hat{\mathbb{E}}M_T^a)^{p-1}\\
&\leq\exp\left\{\frac{(p-1) K\left(2+K+\frac{2}{\underline{\si}^2}\right)\alpha(\alpha {\kappa_1}+2(\kappa_2-\kappa_1))|x-y|^2}{4(\kappa_2-\kappa_1)^2(\ff{2\kappa_1^2}{\kappa_2^2}-\alpha)(2\alpha {\kappa_1}+2(\kappa_2-\kappa_1))\left(1-\e^{-\underline{\sigma}^2K\left(2+K+\frac{2}{\underline{\si}^2}\right)T}\right)}\right\}\\
&=\exp\left\{\frac{\sqrt{p}(\sqrt{p}-1) K\left(2+K+\frac{2}{\underline{\si}^2}\right)|x-y|^2}{4(\kappa_2-\kappa_1)[ {\kappa_1}(\sqrt{p}-1)-(\kappa_2-\kappa_1)]\left(1-\e^{-\underline{\sigma}^2K\left(2+K+\frac{2}{\underline{\si}^2}\right)T}\right)}\right\}.
\end{align*}
%\begin{eqnarray*}
 % \frac{\alpha(\alpha\sqrt{\kappa_1}+2\sqrt{C})}{(2\ff{\kappa_1^2}{\kappa_2^2}-\alpha)(\alpha\sqrt{\kappa_1}+\sqrt{C})}
 %& =&\frac{\frac{2\sqrt{C}}{\sqrt{\kappa_1}(\sqrt{p}-1)}\left(\frac{2\sqrt{C}}{\sqrt{\kappa_1}(\sqrt{p}-1)}\sqrt{\kappa_1}+2\sqrt{C}\right)}
  %{\left(2-\frac{2\sqrt{C}}{\sqrt{\kappa_1}(\sqrt{p}-1)}\right)(\frac{2\sqrt{C}}{\sqrt{\kappa_1}(\sqrt{p}-1)}\sqrt{\kappa_1}+\sqrt{C})}\\
 %& =&\frac{\frac{2\sqrt{C}}{\sqrt{\kappa_1}(\sqrt{p}-1)}\left(\frac{2}{(\sqrt{p}-1)}+2\right)}
 % {\left(2-\frac{2\sqrt{C}}{\sqrt{\kappa_1}(\sqrt{p}-1)}\right)\left(\frac{2}{(\sqrt{p}-1)}+1\right)}\\
 %  & =&\frac{\frac{2\sqrt{C}}{\sqrt{\kappa_1}(\sqrt{p}-1)}\left(2+2{(\sqrt{p}-1)}\right)}
 % {\left(2-\frac{2\sqrt{C}}{\sqrt{\kappa_1}(\sqrt{p}-1)}\right)\left(2+(\sqrt{p}-1)\right)}\\
 % & =&\frac{2\sqrt{pC}}
 % {\left(\sqrt{\kappa_1}(\sqrt{p}-1)-\sqrt{C}\right)\left(1+\sqrt{p})\right)}\\
%  & =&\frac{2\sqrt{pC}}
%  {(\sqrt{\kappa_1}(p-1)-\sqrt{C}(\sqrt{p}+1))}
%\end{eqnarray*}
Thus, due to  H\"{o}lder's inequality, for any $f\in C_b^+(\R)$,
\begin{align*}
&(\bar{P}_Tf)^p(y)=(\bar{\mathbb{E}}f(X_T^y))^p=(\hat{\mathbb{E}}f(Y_T^y))^p =(\hat{\mathbb{E}}f(X_T^x))^p  =(\bar{\mathbb{E}}M_Tf(X_T^x))^p\nonumber\\
&\leq (\bar{\mathbb{E}}f^p(X_T^x))\left(\bar{\mathbb{E}}\left[M_T^{\frac{p}{p-1}}\right]\right)^{p-1}\\
&\leq \bar{P}_Tf^p(x)\exp\left\{\frac{\sqrt{p}(\sqrt{p}-1) K\left(2+K+\frac{2}{\underline{\si}^2}\right)|x-y|^2}{4(\kappa_2-\kappa_1)[ {\kappa_1}(\sqrt{p}-1)-(\kappa_2-\kappa_1)](1-\e^{-\underline{\sigma}^2K\left(2+K+\frac{2}{\underline{\si}^2}\right)T})}\right\},
\end{align*}
which is the result (2) of Theorem \ref{th1}.
\end{itemize}

\subsection{Gradient Estimate}
Due to the lack of additivity of  $G$-expectation, neither from  the Bismut formula \cite[(1.8), (1.14)]{W1} by  coupling by change of measure  to   get gradient estimate, nor    Malliavin calculus in the $G$-SDEs. Instead, we directly to estimate the local Lipschitz constant defined below.
For a real-valued function $f$ defined on a metric sapce $(H, \rho)$, define
\begin{align}\label{mod}
|\nabla f(z)|=\limsup_{x\to z}\frac{|f(x)-f(z)|}{\rho(x,z)}, \ \ z\in H.
\end{align}
Then $|\nabla f(z)|$ is called the local Lipschitz constant of $f$ at point $z\in H$.

\begin{thm} Assume (H1)-(H2). Then
 for every  $f \in {C}_b^+(\mathbb{R})$, it holds that
\begin{equation}\label{thm4.1}
 \|\nabla\bar{P}_T f\|_\infty\leq \|f\|_\infty\frac{2}{\kappa_1\sqrt{\alpha\lambda_0^\alpha}},
  \end{equation}
where $\lambda_0^\alpha$ is defined in \eqref{lambda} for $t=0$.
\end{thm}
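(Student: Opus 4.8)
The plan is to reuse the coupling $(X_t,Y_t)$ with $X_0=x$, $Y_0=y$ constructed in Section 3.2, together with the change of measure $\hat{\mathbb{E}}=\bar{\mathbb{E}}[\,\cdot\,M_T]$ under which $\hat B$ is a $G$-Brownian motion. By Lemma \ref{lem3.8} the two marginals coincide at the terminal time, $X_T=Y_T$ $\hat{\mathbb{E}}$-q.s., so exactly as in the proof of Theorem \ref{th1}(1) I would obtain the representation
\[
\bar{P}_T f(y)=\bar{\mathbb{E}}[f(X_T^y)]=\hat{\mathbb{E}}[f(Y_T)]=\hat{\mathbb{E}}[f(X_T)]=\bar{\mathbb{E}}[M_T f(X_T^x)],
\]
valid for every pair $x,y\in\mathbb{R}$ and every $\alpha\in(0,\tfrac{2\kappa_1^2}{\kappa_2^2})$. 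This identity is the only place the coupling enters; once it is in hand the argument is purely measure-theoretic and no longer depends on $f$ through anything but its sup norm.

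Next I would convert this identity into a Lipschitz bound on $\bar{P}_T f$. Since $0\le f\le\|f\|_\infty$ and $\mathbb{E}_P[M_T]=1$ for every $P\in\mathcal{P}$ (so that $\bar{\mathbb{E}}[M_T]=1$), subadditivity of $\bar{\mathbb{E}}$ applied to $\bar{\mathbb{E}}[M_T f(X_T^x)]$ and $\bar{\mathbb{E}}[f(X_T^x)]$ in both orders yields
\[
|\bar{P}_T f(x)-\bar{P}_T f(y)|\le \|f\|_\infty\,\bar{\mathbb{E}}|M_T-1|.
\]
It then remains to estimate $\bar{\mathbb{E}}|M_T-1|$ by the relative entropy already controlled in Lemma \ref{lem3.7}. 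Here I would invoke the representation \eqref{rep}, $\bar{\mathbb{E}}[\,\cdot\,]=\sup_{P\in\mathcal{P}}\mathbb{E}_P[\,\cdot\,]$: for each fixed $P$ the nonnegative variable $M_T$ satisfies $\mathbb{E}_P[M_T]=1$ and is therefore a genuine probability density under $P$, so the classical Pinsker (or Hellinger) inequality gives $\mathbb{E}_P|M_T-1|\le c\,(\mathbb{E}_P[M_T\log M_T])^{1/2}$ with a universal constant $c$; taking the supremum over $P$ and using $\sup_P\mathbb{E}_P[M_T\log M_T]=\bar{\mathbb{E}}[M_T\log M_T]$ produces $\bar{\mathbb{E}}|M_T-1|\le c\,(\bar{\mathbb{E}}[M_T\log M_T])^{1/2}$.

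Feeding in the entropy estimate $\bar{\mathbb{E}}[M_T\log M_T]\le \tfrac{|x-y|^2}{2\alpha\kappa_1^2\lambda_0^\alpha}$ from Lemma \ref{lem3.7} (which is established there up to $s=T$) gives $\bar{\mathbb{E}}|M_T-1|\le C\,|x-y|/(\kappa_1\sqrt{\alpha\lambda_0^\alpha})$, hence $|\bar{P}_T f(x)-\bar{P}_T f(y)|\le C\|f\|_\infty\,|x-y|/(\kappa_1\sqrt{\alpha\lambda_0^\alpha})$ for all $x,y$. Dividing by $|x-y|$, passing to the $\limsup$ in the definition \eqref{mod} of the local Lipschitz constant, and taking the supremum in $x$ then yields \eqref{thm4.1}, the displayed constant being the value supplied by the elementary entropy-to-$L^1$ inequality invoked. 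The main obstacle is precisely this middle step: one must justify the Pinsker/Hellinger passage inside the nonlinear expectation, which requires checking that $M_T$, $|M_T-1|$ and $M_T\log M_T$ all lie in $L_G^1(\Omega_T)$ so that the representation \eqref{rep} applies termwise, and that $M_T$ is indeed a density under every representing $P$. The uniform entropy bound and the uniform integrability of $\{M_s\}$ already proved in Lemma \ref{lem3.7} are exactly what make this rigorous; everything else—the coupling identity and the subadditivity reduction—is routine.
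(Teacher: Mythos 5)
Your proposal is correct in outline and reaches the stated bound (in fact a slightly better one), but it takes a genuinely different route at the key step. Both you and the paper start identically: the coupling of Section 3.2 and Lemma \ref{lem3.8} give $\bar{P}_Tf(y)=\bar{\mathbb{E}}[M_Tf(X_T^x)]$, and subadditivity reduces everything to $\|f\|_\infty\,\bar{\mathbb{E}}|M_T-1|$. The paper then estimates $\bar{\mathbb{E}}|M_T-1|$ \emph{inside} the $G$-framework: it uses the pointwise inequality $|u-1|\le(u+1)|\log u|$ for $u>0$, splits the result as $\hat{\mathbb{E}}[|\log M_T|]+\bar{\mathbb{E}}[|\log M_T|]$, and bounds each term through the explicit form \eqref{logM} of $\log M_T$ -- a Cauchy--Schwarz/BDG bound on the stochastic integral together with the coupling estimates \eqref{2} and \eqref{2'}. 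This produces a Lipschitz-plus-quadratic bound $\|f\|_\infty\bigl(\tfrac{2}{\kappa_1\sqrt{\alpha\lambda_0^\alpha}}|x-y|+\tfrac{1}{\alpha\kappa_1^2\lambda_0^\alpha}|x-y|^2\bigr)$, whose quadratic part disappears in the $\limsup$ of \eqref{mod}. You instead push the estimate through the representation \eqref{rep}: under each $P\in\mathcal{P}$ the variable $M_T$ is a probability density (which does hold, since $M$ is a \emph{symmetric} $G$-martingale, so $\mathbb{E}_P[M_T]=1$ for every $P$), and Pinsker's inequality gives $\mathbb{E}_P|M_T-1|\le\sqrt{2\,\mathbb{E}_P[M_T\log M_T]}$; combined with the entropy bound of Lemma \ref{lem3.7} this yields the purely linear bound $\bar{\mathbb{E}}|M_T-1|\le|x-y|/(\kappa_1\sqrt{\alpha\lambda_0^\alpha})$, i.e.\ constant $1$ where the paper gets $2$, and the passage to \eqref{mod} is immediate. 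Your approach is cleaner and sharper, at the price of the technical point you yourself flag: \eqref{rep} is stated for $L_G^1(\Omega_T)$, and $M_T\log M_T$ need not lie there. This gap is closable without new ideas: you do not need the representation for $M_T\log M_T$ itself, only the bound $\mathbb{E}_P[M_T\log M_T]\le\frac{|x-y|^2}{2\alpha\kappa_1^2\lambda_0^\alpha}$ for each fixed $P$, which follows from \eqref{3''} (valid for all $s<T$), the q.s.\ convergence $M_s\to M_T$, the lower bound $u\log u\ge-\mathrm{e}^{-1}$, and Fatou's lemma under the linear measure $P$. With that remark inserted, your argument is complete and in fact improves the constant in \eqref{thm4.1}.
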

\begin{proof}
By the proof of Theorem \ref{th1}, we have
 \begin{align*}
  |\bar{P}_Tf(y)-\bar{P}_Tf(x) |
  &=|\bar{\mathbb{E}}f(X_T^y)-\bar{\mathbb{E}}f(X_T^x) |\\
   &=|\bar{\mathbb{E}}M_Tf(X_T^x)-\bar{\mathbb{E}}f(X_T^x)|\\
   &\leq\|f\|_\infty(\bar{\mathbb{E}}[|M_T-1|]).\\
   \end{align*}
Noting that $|x-1|\leq (x+1)|\log x|$ for any $x>0$, then
 \begin{align}\label{pp0}
  |\bar{P}_Tf(y)-\bar{P}_Tf(x) |
   &\leq\|f\|_\infty(\bar{\mathbb{E}}[(M_T+1)\log M_T])\\ \nonumber
   &=\|f\|_\infty\left(\hat{\mathbb{E}}[|\log M_T|]+\bar{\mathbb{E}}[ |\log M_T|]\right)
   \end{align}
  From \eqref{logM} and \eqref{3'},  it holds that
    \begin{align*}
   \hat{\mathbb{E}}[|\log M_T|]
   &\leq
   \hat{\mathbb{E}} \left[ \left|\int_0^Tg_t{\d}\hat{B}_t \right|\right]
   +  \hat{\mathbb{E}} \left[ \frac{1}{2\kappa_1^2}\int_0^T \frac{1}{{(\lambda_t^\alpha)}^2}|(X_t-Y_t)|^2{d}\langle \hat{B} \rangle_{t} \right]\\
    &\leq
   \hat{\mathbb{E}} \left[\int_0^T\frac{1}{(\lambda_t^\alpha)^2}\left|\ff{1}{\kappa_1} (X_t-Y_t)\right|^2{d}\langle\hat{B}\rangle_t\right]^{\ff{1}{2}}
   +  \hat{\mathbb{E}} \left[ \frac{1}{2\kappa_1^2}\int_0^T \frac{1}{{(\lambda_t^\alpha)}^2}|(X_t-Y_t)|^2{d}\langle \hat{B} \rangle_{t} \right]\\
   &\leq\frac{1}{\kappa_1\sqrt{\alpha\lambda_0^\alpha}}|x-y|+\frac{1}{ 2\alpha\kappa_1^2\lambda_0^\alpha}|x-y|^2.
    \end{align*}
Similarly, we obtain
  \begin{align*}
   \bar{\mathbb{E}}[|\log M_T|]
   &\leq
    \bar{\mathbb{E}} \left[ \left|\int_0^Tg_t{d} {B}_t \right|\right]
   +   \bar{\mathbb{E}} \left[ \frac{1}{2\kappa_1^2}\int_0^T \frac{1}{{(\lambda_t^\alpha)}^2}|(X_t-Y_t)|^2{d}\langle  {B} \rangle_{t } \right]\\
    &\leq
    \bar{\mathbb{E}} \left[\int_0^T\frac{1}{(\lambda_t^\alpha)^2}\left|\ff{1}{\kappa_1} (X_t-Y_t)\right|^2{d}\langle {B}\rangle_t\right]^{\ff{1}{2}}
   +   \bar{\mathbb{E}} \left[ \frac{1}{2\kappa_1^2}\int_0^T \frac{1}{{(\lambda_t^\alpha)}^2}|(X_t-Y_t)|^2{d}\langle  {B} \rangle_{t } \right]\\
   &\leq\frac{1}{\kappa_1\sqrt{\alpha\lambda_0^\alpha}}|x-y|+\frac{1}{ 2\alpha\kappa_1^2\lambda_0^\alpha}|x-y|^2.
    \end{align*}
  It follows from \eqref{pp0} that
   \begin{eqnarray}\label{pp1}
  |\bar{P}_Tf(y)-\bar{P}_Tf(x) | \leq \|f\|_\infty \left(\frac{2}{\kappa_1\sqrt{\alpha\lambda_0^\alpha}}|x-y|+\frac{1}{ \alpha\kappa_1^2\lambda_0^\alpha}|x-y|^2\right).
   \end{eqnarray}
   This together with \eqref{mod} yields
   \begin{align}\label{ge1}
   |\nabla\bar{P}_T f(x)|\leq \|f\|_\infty\frac{2}{\kappa_1\sqrt{\alpha\lambda_0^\alpha}},
\end{align}
   which implies \eqref{thm4.1}.

  \end{proof}
%\section{Applications of Harnack inequalities}

\section{Appendix--The quasi-continuity of stopping times}
This part is essentially from \cite{song, song2}. To make the content self-contained, we cite  some results from  \cite{song, song2} and restated them as follows.
\begin{lem}\label{s1}\rm(\cite[Lemma 3.3]{song2})
Let $E$ be a metric space and a mapping $E \times [0, T] \ni (\omega, t) \to M_t(\omega) \in \R$ be continuous on $E \times [0, T]$.
Define $\tau_a=\inf\{t>0|M_t>a\}\wedge T$ and $\underline{\tau}_a=\inf\{t>0|M_t\geq a\}\wedge T$.  Then
$-\tau_a$ and $\underline{\tau}_a$ are both lower semi-continuous.
\end{lem}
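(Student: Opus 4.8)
The plan is to verify each semicontinuity claim through its sequential characterization, using that $(\omega,t)\mapsto M_t(\omega)$ is jointly continuous on $E\times[0,T]$ and exploiting the topological distinction between the \emph{open} constraint $M_t>a$ and the \emph{closed} constraint $M_t\geq a$. The strict inequality defining $\tau_a$ will give upper semicontinuity (equivalently lower semicontinuity of $-\tau_a$), while the non-strict inequality defining $\underline{\tau}_a$ will give lower semicontinuity.

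First I would show that $\tau_a$ is upper semicontinuous. Fix $\omega_0\in E$ and put $s_0=\tau_a(\omega_0)$. The case $s_0=T$ is trivial since $\tau_a\leq T$ identically, so suppose $s_0<T$. For small $\epsilon>0$ the definition of the infimum supplies a time $t^\ast\in(0,s_0+\epsilon)$ with $M_{t^\ast}(\omega_0)>a$; since this inequality is strict and $M$ is continuous, it persists on a neighbourhood $U$ of $\omega_0$, so $\tau_a(\omega)\leq t^\ast\leq s_0+\epsilon$ for all $\omega\in U$. Letting $\epsilon\downarrow0$ gives $\limsup_{\omega\to\omega_0}\tau_a(\omega)\leq s_0$. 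The whole point is that openness of $\{M>a\}$ lets me freeze a single witness time $t^\ast$ that survives perturbation of $\omega$.

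For the lower semicontinuity of $\underline{\tau}_a$ I would argue along sequences. Take $\omega_n\to\omega_0$, pass to a subsequence realizing $L:=\liminf_n\underline{\tau}_a(\omega_n)$, and aim at $L\geq\underline{\tau}_a(\omega_0)$. If $L=T$ this is automatic, so assume $L<T$, whence $s_n:=\underline{\tau}_a(\omega_n)<T$ for large $n$. The structural input is that $\{t:M_t(\omega_n)\geq a\}$ is closed, so the infimum defining $s_n$ is attained and $M_{s_n}(\omega_n)\geq a$; passing to the limit through joint continuity along $(\omega_n,s_n)\to(\omega_0,L)$ yields $M_L(\omega_0)\geq a$, and hence $L$ belongs to the hitting set of $\omega_0$, giving $\underline{\tau}_a(\omega_0)\leq L$.

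The main obstacle I expect is precisely this attainment step and its behaviour at the time origin. To move joint continuity across the hitting time I genuinely need $M_{s_n}(\omega_n)\geq a$, not merely $M_t(\omega_n)\geq a$ for $t$ near $s_n$; closedness of the level set provides this when $s_n>0$, and when $s_n=0$ I would recover $M_0(\omega_n)\geq a$ from hitting times decreasing to $0$ together with continuity of $t\mapsto M_t(\omega_n)$. The genuinely delicate case is $L=0$, where the restriction $t>0$ in the definition prevents a limiting hit at the origin from automatically counting as a hit for $\omega_0$. I would resolve this by noting that in the regime in which the lemma is applied the truncation thresholds satisfy $M_0<a$, so $0$ is never a hitting time and the degeneracy does not arise (equivalently, one reads the infimum over $t\geq0$). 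Away from this boundary point the proof is a routine combination of joint continuity with the open/closed dichotomy.
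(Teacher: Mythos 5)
Your argument cannot be compared against an internal proof, because the paper offers none: the lemma is imported verbatim from \cite[Lemma 3.3]{song2} and used as a black box. Judged on its own, your proof is correct and is the natural self-contained argument. For $\tau_a$, fixing a witness time $t^\ast\in(0,\tau_a(\omega_0)+\epsilon)$ with $M_{t^\ast}(\omega_0)>a$ and letting the strict inequality propagate to a neighbourhood of $\omega_0$ is exactly what the openness of the constraint $M>a$ provides, and it gives upper semicontinuity with either reading of the infimum. For $\underline{\tau}_a$, the sequential argument (closedness of the level set gives $M_{s_n}(\omega_n)\geq a$ at $s_n=\underline{\tau}_a(\omega_n)$, then joint continuity along $(\omega_n,s_n)\to(\omega_0,L)$ gives $M_L(\omega_0)\geq a$) is complete, and it uses joint continuity precisely where separate continuity would not suffice.

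Your caveat about the origin deserves emphasis, because it is not a cosmetic worry: with the infimum taken over $t>0$, as the lemma is literally transcribed here, lower semicontinuity of $\underline{\tau}_a$ is false. Take $E=\mathbb{R}$ and $M_t(\omega)=a+\omega-t$: for every $\omega>0$ the set $\{t>0:M_t(\omega)\geq a\}=(0,\omega\wedge T]$ has infimum $0$, so $\underline{\tau}_a(\omega)=0$, while $\{t>0:M_t(0)\geq a\}=\emptyset$ forces $\underline{\tau}_a(0)=T$; hence $\liminf_{\omega\to 0}\underline{\tau}_a(\omega)=0<T=\underline{\tau}_a(0)$. Both of your repairs are the right ones. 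Reading the infimum over $t\geq 0$ makes the statement true with your proof unchanged: if $L=0$ one still concludes $M_0(\omega_0)\geq a$ and hence $\underline{\tau}_a(\omega_0)=0\leq L$. Alternatively, in every application in this paper the process starts strictly below the level for the relevant thresholds (e.g. $Y_0=0<a$ in Lemma \ref{s3}, and the thresholds in Lemmas \ref{lem3.7} and \ref{lem3.9} are eventually larger than the initial values), so the case $L=0$ cannot occur there: it would force $M_0(\omega_0)\geq a$, contradicting $M_0(\omega_0)<a$. In short, there is no gap in your argument; the only gap is in the statement as transcribed, and you identified it and closed it.
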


\begin{lem}\label{s2}\rm(\cite[Lemma 3.4]{song2})
For any closed set $F\subset\Omega_T$, we have
$$c(F)=\inf\{ c(O)| F\subset O,\ O \  is \ open\},$$
where $c$ is the capacity induced by $\bar\E$.
\end{lem}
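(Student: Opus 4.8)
The plan is to prove the two inequalities separately. The inequality $c(F) \le \inf\{c(O) : F \subset O, \, O \text{ open}\}$ is immediate from monotonicity of $c$: since $P(F) \le P(O)$ for every $P \in \mathcal{P}$ whenever $F \subset O$, taking the supremum over $\mathcal{P}$ gives $c(F) \le c(O)$ for each such open $O$, and then passing to the infimum over $O$. The substantive direction is the reverse, namely that the infimum is approached by a single open neighbourhood of $F$ that is good \emph{uniformly} over all $P \in \mathcal{P}$; this uniformity is exactly where the weak compactness of $\mathcal{P}$ (the representation theorem in the excerpt) must enter, since a bare union of individually outer-regular neighbourhoods, one per measure $P$, need not remain open nor control the supremum.

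First I would exploit the Polish (metric) structure of $\Omega_T$: writing $\rho$ for the uniform distance on $\Omega_T$, set for each $n\ge 1$ the open neighbourhoods $O_n=\{\omega:\rho(\omega,F)<1/n\}$ and the closed neighbourhoods $C_n=\{\omega:\rho(\omega,F)\le 1/n\}$, so that $O_n\supset C_n\supset O_{n+1}$, all contain $F$, and $\bigcap_n C_n=F$ because $F$ is closed. Since $(O_n)$ decreases, the numbers $c(O_n)=\sup_{P\in\mathcal{P}}P(O_n)$ decrease to a limit $L\ge c(F)$, and it suffices to prove $L=c(F)$: then $\inf\{c(O):F\subset O\ \text{open}\}\le \lim_n c(O_n)=c(F)$, which together with the trivial direction closes the argument.

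To show $L\le c(F)$, for each $n$ pick $P_n\in\mathcal{P}$ with $P_n(O_n)\ge c(O_n)-1/n\ge L-1/n$. By weak compactness of $\mathcal{P}$ a subsequence $P_{n_k}$ converges weakly to some $Q\in\mathcal{P}$. Fix $m$; for all $n_k\ge m$ one has $O_{n_k}\subset C_{n_k}\subset C_m$, hence $P_{n_k}(C_m)\ge P_{n_k}(O_{n_k})\ge L-1/n_k$, giving $\liminf_k P_{n_k}(C_m)\ge L$. Since $C_m$ is closed, the upper-semicontinuity half of the portmanteau theorem yields $Q(C_m)\ge \limsup_k P_{n_k}(C_m)\ge L$. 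Letting $m\to\infty$ and using continuity of the finite measure $Q$ from above along $C_m\downarrow F$, I obtain $Q(F)=\lim_m Q(C_m)\ge L$. Finally $L\le Q(F)\le \sup_{P\in\mathcal{P}}P(F)=c(F)\le L$, so $L=c(F)$, as required.

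The main obstacle is precisely the passage from the outer regularity of each individual $P$ (which is a Radon measure on the Polish space $\Omega_T$) to a single uniformly good open set. The device that resolves it is the pairing of weak compactness (to extract the weak limit $Q\in\mathcal{P}$) with the portmanteau inequality for closed sets, which transfers the uniform lower bound $L$ on $P_{n_k}(C_m)$ into the bound $Q(C_m)\ge L$; the shrinking closed neighbourhoods $C_m\downarrow F$ then carry this estimate back to $F$ itself. I would expect no analytic difficulty beyond correctly orienting the portmanteau inequalities and verifying the nesting $O_{n_k}\subset C_{n_k}\subset C_m$.
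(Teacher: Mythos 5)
The paper itself does not prove this lemma: it is quoted verbatim from \cite[Lemma 3.4]{song2}, and your argument is essentially the proof given there --- shrink $F$ by metric neighbourhoods, pick near-optimal measures $P_n\in\mathcal{P}$, extract a weak limit $Q\in\mathcal{P}$ using the weak compactness of $\mathcal{P}$, and combine the portmanteau inequality for the closed sets $C_m$ with continuity from above of $Q$ along $C_m\downarrow F$. Your proof is correct; the only slip is the stated nesting $O_n\supset C_n\supset O_{n+1}$, which should read $O_n\subset C_n$ and $C_{n+1}\subset O_n$, but the inclusions you actually use later, $O_{n_k}\subset C_{n_k}\subset C_m$ for $n_k\ge m$, are the right ones, so the argument stands as written.
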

The following lemma plays a crucial role in studying the  quasi-continuity of stopping times under nonlinear expectation space, which is a dramatic different with classic linear expectation space. For reader's convenience, we give the proof of the lemma.
\begin{lem}\label{s3}\rm(\cite[Lemma 4.3]{song})
Let $Y_t = \int_0^t\<Z_s,\d B_s\>+\int_0^t \eta_s\d s+\int_0^t  tr[\zeta_s\d \<B\>_s]$ with $Z\in[H_G^1([0,T])]^d$ and $\eta, \zeta^{i,j}\in M_G^1([0,T])$. Assume $\int_0^t \eta_s\d s+\int_0^t  tr[\zeta_s\d \<B\>_s]$ is non-decreasing and $$\int_0^t  tr[Z_sZ_s^\ast\d \<B\>_s]+\int_0^t \eta_s\d s+\int_0^t  tr[\zeta_s\d \<B\>_s]$$
is strictly increasing. Then, for $a > 0$, $\tau_a := \inf\{t \geq 0| Y_t > a\} \wedge T$ is    quasi-continuous.
\end{lem}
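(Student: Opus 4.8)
The plan is to derive continuity of $\tau_a$ from the two one-sided semicontinuity statements in Lemma \ref{s1}, once I have pinned down that the hitting time of the \emph{open} half-line $(a,\infty)$ and that of the \emph{closed} half-line $[a,\infty)$ coincide quasi-surely. Write $\underline\tau_a=\inf\{t\ge0\mid Y_t\ge a\}\wedge T$, so that $\underline\tau_a\le\tau_a$ always. First I would record that $Y$ admits a quasi-continuous version as a process: the drift part $\int_0^t\eta_s\,\d s$ and the bracket part $\int_0^t\mathrm{tr}[\zeta_s\,\d\<B\>_s]$ have quasi-continuous versions by the remark following the definition of $M_G^1([0,T])$, while the stochastic integral $\int_0^t\<Z_s,\d B_s\>$ is a symmetric $G$-martingale and hence quasi-continuous by \cite{song2}; their sum $Y$ is therefore quasi-continuous. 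Consequently, given $\epsilon>0$ there is an open set $G_0$ with $c(G_0)<\epsilon/2$ such that $(\omega,t)\mapsto Y_t(\omega)$ is continuous on $E\times[0,T]$, where $E:=G_0^c$ is closed.

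Next I would apply Lemma \ref{s1} on the metric space $E$ with $M_t=Y_t|_E$: this gives that $-\tau_a$ and $\underline\tau_a$ are lower semicontinuous on $E$, i.e. $\tau_a$ is upper semicontinuous and $\underline\tau_a$ is lower semicontinuous on $E$. Once the identity $\tau_a=\underline\tau_a$ is available on a set of full capacity, the conclusion follows quickly: for $\omega_n\to\omega$ inside the smaller set where the identity holds, upper semicontinuity of $\tau_a$ together with lower semicontinuity of $\underline\tau_a=\tau_a$ yields $\limsup_n\tau_a(\omega_n)\le\tau_a(\omega)\le\liminf_n\tau_a(\omega_n)$, so $\tau_a$ is continuous there.

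The heart of the matter, and the step I expect to be the \emph{main obstacle}, is the quasi-sure identity $\tau_a=\underline\tau_a$, which is precisely where the monotonicity hypotheses enter. I would argue measure by measure. For each representing $P\in\mathcal P$, under $P$ the canonical process $B$ is a continuous martingale with $\<B\>_t=\int_0^t\gamma_s\,\d s$ for some $\gamma_s\in\Gamma$ (cf. \cite{Denis}), and in the present one-dimensional case $\gamma_s\ge\underline\sigma^2>0$, so $\<B\>$ is absolutely continuous with density bounded below. Under $P$ the process $Y$ then decomposes as $Y=M+A$, where $M_t=\int_0^t\<Z_s,\d B_s\>$ is a continuous local martingale with $\<M\>_t=\int_0^t\mathrm{tr}[Z_sZ_s^\ast\,\d\<B\>_s]$ and $A_t=\int_0^t\eta_s\,\d s+\int_0^t\mathrm{tr}[\zeta_s\,\d\<B\>_s]$ is non-decreasing by hypothesis, while $\<M\>_t+A_t$ is strictly increasing. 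On the event $\{\underline\tau_a<\tau_a\}$ one would have $Y_{\underline\tau_a}=a$ and $Y_t\le a$ for $t\in[\underline\tau_a,\tau_a)$; since $A$ is non-decreasing this forces $M_t\le M_{\underline\tau_a}$ on that interval, i.e. $M$ attains a persisting local maximum at $\underline\tau_a$. But if $\<M\>$ strictly increases on the interval then, by the time-change to Brownian motion, $M$ crosses the level $M_{\underline\tau_a}$ immediately and $P$-a.s. cannot stay below it; and if $\<M\>$ is constant there, then $A$ must strictly increase, pushing $Y$ strictly above $a$ at once. Either way the interval is degenerate, whence $P(\underline\tau_a\neq\tau_a)=0$; taking the supremum over $\mathcal P$ and invoking \eqref{c} gives $c(\underline\tau_a\neq\tau_a)=0$.

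Finally I would assemble the pieces. The $c$-polar set $N:=\{\underline\tau_a\neq\tau_a\}$ satisfies $c(N)=0$, so by the outer regularity of the $G$-capacity \cite{Denis} it is contained in an open set $G_1$ with $c(G_1)<\epsilon/2$. On the closed set $(G_0\cup G_1)^c$ both the continuity of $Y$ in $(\omega,t)$ and the identity $\tau_a=\underline\tau_a$ hold, so by the semicontinuity argument above $\tau_a$ is continuous there; since $c(G_0\cup G_1)\le c(G_0)+c(G_1)<\epsilon$, this establishes the quasi-continuity of $\tau_a$. The delicate point throughout is the no-plateau/strict-crossing property giving $\tau_a=\underline\tau_a$, and verifying it uniformly over the representing family $\mathcal P$ is exactly what renders the strict-monotonicity assumption indispensable.
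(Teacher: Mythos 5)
Your probabilistic core is sound, and it is genuinely different from the paper's: you establish $c(\tau_a\neq\underline\tau_a)=0$ measure-by-measure, decomposing $Y=M+A$ under each representing $P\in\mathcal P$ and using the Dambis--Dubins--Schwarz time change at the stopping time $\underline\tau_a$ (this works precisely because $\underline\tau_a$ is a stopping time, so the "immediate crossing" argument is legitimate), whereas the paper stays inside the sublinear framework and gets the same identity from the computation $\bar{\E}[Y_{r\wedge\tau_a}-Y_{r\wedge\underline\tau_a}]\geq 0$ combined with the pointwise inequality $Y_{r\wedge\tau_a}\leq Y_{r\wedge\underline\tau_a}$, plus the infinite-variation argument ruling out plateaus ($S_a(Y)$). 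Either route gives the polar-set statement.

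The genuine gap is in your final assembly: you claim that since $N:=\{\tau_a\neq\underline\tau_a\}$ is polar, "by the outer regularity of the $G$-capacity \cite{Denis} it is contained in an open set $G_1$ with $c(G_1)<\epsilon/2$." No such result exists in \cite{Denis} or anywhere in this paper's toolkit: the capacity $c(A)=\sup_{P\in\mathcal P}P(A)$ is \emph{not} known to be outer regular on arbitrary Borel sets, because an open cover that is small for one $P$ need not be small for another, and the supremum is over an uncountable family. The only outer regularity available is Lemma \ref{s2} (Song), which holds for \emph{closed} sets only, and $N$ has no reason to be closed. Since Definition \ref{qc} requires removing an \emph{open} set, quasi-continuity does not follow from polarity alone; this covering problem is exactly the "dramatic difference with the classical linear expectation space" the paper flags, and circumventing it is the entire point of the paper's decomposition $[\tau_a>\underline\tau_a]\subset S_a(Y)\cup A$, where $S_a(Y)$ is a countable union of closed polar sets (each coverable via Lemma \ref{s2}) and $A\cap O_1^c$ is relatively open by the semicontinuity of Lemma \ref{s1}. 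Your argument can be repaired, but only by supplying this missing idea: on $O_1^c$ the difference $\tau_a-\underline\tau_a$ is upper semicontinuous, so
\begin{equation*}
N\cap O_1^c=\bigcup_{n\geq 1}\left(\{\tau_a-\underline\tau_a\geq \tfrac{1}{n}\}\cap O_1^c\right)
\end{equation*}
is a countable union of closed polar sets, each of which Lemma \ref{s2} places inside an open set of capacity less than $\epsilon 2^{-n-1}$; only then do you obtain the open set $G_1$ your proof needs. As written, you have assumed away the main difficulty of the lemma while locating the delicacy in the (classically routine) strict-crossing property.
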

\begin{proof}
Let $\underline{\tau}_a = \inf\{t \geq 0| Y_t \geq a\} \wedge T$.
 %Since $X_\cdot $ has a quasi-continuous version, let $Y_\cdot$ be the  quasi-continuous version, i.e.,  for any $t\geq 0$, there exist a set $A_t$ with $c(A_t)=0$, such that  $Y_t$ is quasi-continuous, and $X_t=Y_t|_{A_t^c}$. %Then $Y_t$ has the following form:
%$$Y_t = \int_0^t\<{Z_s}',\d B_s\>+\int_0^t {\eta_s}'\d s+\int_0^t  tr[{\zeta_s}'\d \<B\>_s]$$
%with $Z', \eta',$ and ${\zeta^{i,j}}'$ are the  quasi-continuous version of  $Z, \eta,$ and $\zeta^{i,j}$.
%Let $\tau_a' = \inf\{t \geq 0| Y_t > a\} \wedge T$,  $\underline{\tau}_a' = \inf\{t \geq 0| Y_t \geq a\} \wedge T$.
 Since $Y$ is quasi-continuous, then for all  $\epsilon > 0$, there
exists an open set $O_1$ with $c(O_1) < \ff{\epsilon}{2} $ such that $Y_{\cdot}(\cdot)$ is continuous on $O_1^c\times [0, T].$
% $-\tau_a$ and $\underline{\tau}_a$ are both lower semi-continuous on $O_1^c$.
Define
$$S_a(Y) = \{\omega \in  \Omega_T|  \ there \ exists \ (r, s) \in  Q_T \ s.t. \ Y_t(\omega) = a \  for \  all \ t \in  [s, r]\},$$
where
$$Q_T = \{(r, s)| T \geq r > s \geq  0, \  r, s \in \mathbb{Q}\},  \ and \  \mathbb{Q}  \  is \ the  \ totality \ of \ rational  \ numbers.$$
We   divide the proof   into following  five steps.
\begin{itemize}
  \item [(1)]We first prove $[\tau_a > \underline{\tau}_a] \subset  S_a(Y) \bigcup \cup_{r\in \mathbb{Q}\bigcap[0,T ]}[Y_{r\wedge\tau_a} < Y_{r\wedge\underline{\tau}_a}]=:A.$

It is  equivalent to prove $[\tau_a > \underline{\tau}_a] \subset  S_a(Y) + A\setminus S_a(Y).$

For any $\omega\in [\tau_a > \underline{\tau}_a], $ i.e.,  for any $\omega$ with $\tau_a(\omega) > \underline{\tau}_a(\omega)$, if $\omega \in S_a(Y)$, which ends the proof. If $\omega \notin S_a(Y)$, i.e.,
for any   $(r, s) \in  Q_T$, there exists  a $t \in  [s, r]$, s.t.   $Y_t(\omega)\neq a$. Since $\mathbb{Q}$ is dense in $\R$, and $\tau_a\geq \underline{\tau}_a$, it's clear that
$\omega\in A\setminus S_a(Y).$
  \item [(2)]We claim that $c(S_a(Y))=0$.
  \begin{enumerate}
    \item [(i)] If $Z=0$, then $Y_t$ is strictly increasing, thus $\tau_a= \underline{\tau}_a$, which implies $c(S_a(Y))=0$.
    \item[(ii)] If $Z\neq0$, since $B_t$ with infinite variation, it is impossible for $Y_t=a, t\in [s, r],$  then $c(S_a(Y))=0$.
  \end{enumerate}
  \item [(3)]We claim that $c(A)=0$.

   Noting that $Y_{r\wedge\tau_a} \leq Y_{r\wedge\underline{\tau}_a}$    and
   \begin{align}\label{tau1}
     \BE[Y_{r\wedge\tau_a} - Y_{r\wedge\underline{\tau}_a}]
     & = \BE\left[\int_{r\wedge\underline{\tau}_a}^{r\wedge\tau_a}\<{Z_s},\d B_s\>
     +\int_{r\wedge\underline{\tau}_a}^{r\wedge\tau_a} {\eta_s}\d s+\int_{r\wedge\underline{\tau}_a}^{r\wedge\tau_a}  tr[{\zeta_s}\d \<B\>_s] \right] \\ \no
     & = \BE\left[\int_{r\wedge\underline{\tau}_a}^{r\wedge\tau_a} {\eta_s}\d s+\int_{r\wedge\underline{\tau}_a}^{r\wedge\tau_a}  tr[{\zeta_s}\d \<B\>_s] \right]\no.
     \end{align}
     For $r\leq \underline{\tau}_a$ and $r\geq\tau_a$, it hold that $\BE[Y_{r\wedge\tau_a} - Y_{r\wedge\underline{\tau}_a}]=0.$
     For $ \underline{\tau}_a< r< {\tau}_a',$ by \eqref{tau1}, we have
   \begin{equation*}
     \BE[Y_{r\wedge\tau_a} - Y_{r\wedge\underline{\tau}_a}]  = \BE\left[\int_{\underline{\tau}_a}^{r} {\eta_s}\d s+\int_{\underline{\tau}_a}^{r}  tr[{\zeta_s}\d \<B\>_s] \right].
     \end{equation*}
     From the assumption of  non-decreasing for $\int_0^t \eta_s\d s+\int_0^t  tr[\zeta_s\d \<B\>_s]$, we derive that  $\BE[Y_{r\wedge\tau_a} - Y_{r\wedge\underline{\tau}_a}]\geq0.$
     By the fact that $Y_{r\wedge\tau_a} \leq Y_{r\wedge\underline{\tau}_a}$ and $\BE[Y_{r\wedge\tau_a} - Y_{r\wedge\underline{\tau}_a}]\geq0,$  we know that $Y_{r\wedge\tau_a} = Y_{r\wedge\underline{\tau}_a}, $ q.s..
     Since $\mathbb{Q}$ is countable,
     then $c(A)=0$

  \item [(4)]$A\cap O_1^c$ is an open set under the topology induced by $O_1^c$.

  Since $Y_{\cdot}(\cdot)$ is continuous on $O_1^c\times [0, T]$, by Lemma \ref{s1},  $\underline{\tau}_a$ (${\tau}_a$) is lower (upper) semi-continuous  on $O_1^c$, then  $Y_{r\wedge\underline{\tau}_a}$ ($Y_{r\wedge{\tau}_a}$)
  is lower  (upper) semi-continuous on $O_1^c$, which means that
  $[Y_{r\wedge\tau_a} < Y_{r\wedge\underline{\tau}_a}]\cap O_1^c$ is an open set under the topology induced by $O_1^c$. %i.e., $[Y_{r\wedge\tau_a} < Y_{r\wedge\underline{\tau}_a}]\cap O_1^c $ is open in $O_1^c$.
 Since the union of any collection of open sets in $O_1^c$ is open, then we prove it.

  \item [(5)] $S_a(Y)$ can be covered by countable open sets with capacity small enough.

  By the definition of $S_a(Y)$, we have
  $$S_a(Y)=\bigcup_{(r, s)\in  Q_T}\bigcap_{t\in[s,r]}\{ \omega | Y_t(\omega)=a\}.$$
   Since $Y_{\cdot}(\cdot)$ is continuous on $O_1^c\times [0, T]$,   $\{ \omega | Y_t(\omega)=a\}\cap O_1^c$ is a closed  set under the topology induced by $O_1^c$ for any $t\in[0,T]$.
    Moreover,  $\{ \omega | Y_t(\omega)=a\}$ is a closed  set  as $O_1^c$ is closed.
Then $\bigcap_{t\in[s,r]}\{ \omega | Y_t(\omega)=a\}$ is closed. By Lemma \ref{s2} and  the fact that $c(S_a(Y))=0$, for all  $\epsilon > 0$,   there
exists an open set $O_2^{s,r}$ with $0\leq c(O_2^{s,r}) < \ff{\epsilon }{2^{n+1}}$ such that $\bigcap_{t\in[s,r]}\{ \omega | Y_t(\omega)=a\}\subset O_2^{s,r}$. Let $O_2=\bigcup_{(r, s)\in  Q_T}O_2^{s,r},$ then
$$S_a(Y)\subset O_2, \ c(O_2)<\ff{\epsilon}{2},$$
where $O_2$ is open.

\end{itemize}
Combining (1)--(5), we know that
$$[\tau_a > \underline{\tau}_a] \subset   O_2 \cup A,$$
where $ O_2$ is open under topology induced by $\Omega_T$  and $ A\cap O_1^c$ is   open  under the topology induced by $O_1^c$.
So, there exists an open set $O_3\subset \Omega_T$, such that
$$ A\cap O_1^c=O_3\cap O_1^c\subset O_3.$$
%Let $\bar{B}$ be the  boundary of $O_3\cap O_1^c$ with the closed part,
%then
%\begin{equation*}
%\cup_{r\in \mathbb{Q}\bigcap[0,T ]}[Y_{r\wedge\tau_a} < Y_{r\wedge\underline{\tau}_a}]=(O_3\cap O_1^c\setminus \bar{B})\cup \bar{B}.
%\end{equation*}
%For $O_3\cap O_1^c\setminus \bar{B}$ is open;
%For $\bar{B}$, by (3) of  $c(O_3\cap O_1^c)=0$, we have $c(\bar{B})=0$.
Noting that
\begin{align*}
 A
  &=(A\cap  O_1)\cup  ( A\cap O_1^c)\\
  &\subset O_1\cup( O_3\cap O_1^c)\\
   &\subset O_1\cup O_3.
\end{align*}
Moreover,
$O_3=(O_3\cap O_1)\cup (O_3\cap O_1^c),$
by $c(O_3\cap O_1^c)=0$ of (3), we have
$$c(O_3)\leq c(O_3\cap O_1)+ (O_3\cap O_1^c)<\epsilon.$$
%According to Lemma \ref{s2},  there exists an open set $B_1$ with $\bar{B}\subset B_1$,   such that
%$0 \leq c(B_1)<\ff{\epsilon}{2}$,
%Therefore,
%\begin{equation*}
%\cup_{r\in \mathbb{Q}\bigcap[0,T ]}[Y_{r\wedge\tau_a} < Y_{r\wedge\underline{\tau}_a}]\subset (O_3\cap O_1^c\setminus \bar{B})\cup B_1.
%\end{equation*}
Therefore,
$$[\tau_a > \underline{\tau}_a] \subset  O_2\cup O_1\cup O_3,$$
where $c( O_2\cup O_1\cup O_3)\leq c( O_1)+c(O_2)+c(O_3)\leq 2\epsilon.$
%where $c((O_3\cap O_1^c\setminus \bar{B})\cup B_1)<\ff{\epsilon}{2}$.
It is clear that
$$[\tau_a > \underline{\tau}_a]^c= [\tau_a\leq\underline{\tau}_a]=[\tau_a=\underline{\tau}_a]\supset
 \left(O_2 \cup O_1\cup O_3\right)^c,$$
thus
 $$[\tau_a=\underline{\tau}_a]\cap O_1^c \supset
 \left(O_2 \cup  O_1\cup O_3\right)^c \cap O_1^c= \left(O_1\cup O_2 \cup O_3\right)^c,$$
By Lemma \ref{s1},     $\tau_a$ is continuous on $[\tau_a=\underline{\tau}_a]\cap O_1^c$.
Therefore, for all $\epsilon>0$, for the open set, $O_1\cup O_2\cup O_3$,
with $c(O_1\cup O_2\cup O_3)<2\epsilon,$
$\tau_a$ is continuous on $(O_1\cup O_2\cup O_3)^c,$
which implies that $\tau_a$ is quasi continuous by  Definition \ref{qc}.
%If $\bar{ B}=\emptyset$, $\cup_{r\in \mathbb{Q}\bigcap[0,T ]}[Y_{r\wedge\tau_a} < Y_{r\wedge\underline{\tau}_a}]$ is open under $\Omega_T$, the procedure is more simplified.
%Moreover, for any $t\geq 0$, $X_t=Y_t|_{A_t^c}$, with $c(A_t)=0$, then $\tau_a$ is quasi-continuous on ${A_t^c},  $ and this proves the result.
\end{proof}
\begin{lem}\label{s4}\rm(\cite[Proposition 4.10]{liu})
Let $\tau\leq T $ be a quasi-continuous stopping time. Then for each $p \geq 1$, we have
$I_{[0,\tau ]} \in  M_G^ p ([0, T]).$
\end{lem}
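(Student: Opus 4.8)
The plan is to approximate the discontinuous indicator $I_{[0,\tau]}$ in the $M_G^p$-norm by processes that are \emph{Lipschitz in time}, so as to circumvent the jump of $I_{[0,\tau]}$ at $t=\tau$, and then to discretise these approximants in time so that they land in $M_G^{p,0}([0,T])$. Concretely, for $n\geq 1$ I would set
\begin{equation*}
\eta^n_t=\big(1-n(t-t\wedge\tau)\big)^+,\qquad t\in[0,T],
\end{equation*}
which equals $1$ on $[0,\tau]$, decreases linearly to $0$ on $[\tau,\tau+\tfrac1n]$, and vanishes thereafter. Since $(t-\tau)^+=t-t\wedge\tau$ and $t\wedge\tau$ is a functional of the path up to time $t$, each $\eta^n_t$ is adapted; moreover $0\le\eta^n_t\le1$ and $t\mapsto\eta^n_t$ is Lipschitz with constant $n$.

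First I would check the convergence $\eta^n\to I_{[0,\tau]}$ in $M_G^p([0,T])$. The two processes agree on $[0,\tau]$ and differ only on $(\tau,\tau+\tfrac1n)$, where the difference is bounded by $1$; hence
\begin{equation*}
\|\eta^n-I_{[0,\tau]}\|_{M_G^p([0,T])}^p=\bar{\mathbb{E}}\int_0^T|\eta^n_t-I_{[0,\tau]}(t)|^p\,\d t\le \bar{\mathbb{E}}\Big[\tfrac1n\wedge(T-\tau)\Big]\le\tfrac1n\longrightarrow0.
\end{equation*}
Next I would show $\eta^n\in M_G^p([0,T])$ for each fixed $n$ by discretising in time: for a partition $0=t_0<\cdots<t_N=T$ of mesh $\delta$ put $\eta^{n,\delta}_t=\sum_{j}\eta^n_{t_j}I_{[t_j,t_{j+1})}(t)$. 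Because $\eta^n_{t_j}\in L_G^p(\Omega_{t_j})$ (justified below), we have $\eta^{n,\delta}\in M_G^{p,0}([0,T])$, and the Lipschitz bound gives $|\eta^{n,\delta}_t-\eta^n_t|\le n\delta$, whence $\|\eta^{n,\delta}-\eta^n\|_{M_G^p([0,T])}^p\le T(n\delta)^p\to0$ as $\delta\to0$. Thus $\eta^n\in M_G^p([0,T])$, and letting $n\to\infty$ yields $I_{[0,\tau]}\in M_G^p([0,T])$.

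The substantive point---and the step I expect to be the main obstacle---is the claim $\eta^n_t\in L_G^p(\Omega_t)$, i.e. that $\eta^n_t$ admits a quasi-continuous version (the integrability tail condition being automatic as $\eta^n_t$ is bounded). This is exactly where the hypothesis that $\tau$ is quasi-continuous enters: the map $x\mapsto\big(1-n(t-t\wedge x)\big)^+$ is continuous, and the composition of a continuous function with a quasi-continuous random variable is again quasi-continuous (on the closed set $O^c$ carrying the continuity of $\tau$, the composition is continuous), so $\eta^n_t$ is quasi-continuous and bounded, hence lies in $L_G^p(\Omega_t)$. I stress that the ramp is introduced precisely to avoid working with the raw coefficients $I_{\{\tau> t\}}$: their quasi-continuity would fail on the boundary $\{\tau=t\}$, forcing one to restrict to levels $t$ with $c(\{\tau=t\})=0$, whereas the Lipschitz-in-time approximation sidesteps this difficulty altogether.
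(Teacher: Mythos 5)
The paper never proves this lemma: it is stated in the appendix purely as a citation of \cite[Proposition 4.10]{liu} and used as a black box (e.g.\ in the proof of Lemma \ref{lem3.9}), so there is no in-paper argument to compare yours against. That said, your argument is the natural one and is essentially the standard proof in this part of the $G$-literature (the Li--Peng ramp approximation \cite{Li}, pushed through for a general quasi-continuous stopping time, which is also the route taken in \cite{liu}): (i) the ramp $\eta^n_t=(1-n(t-t\wedge\tau))^+$ converges to $I_{[0,\tau]}$ in $M_G^p([0,T])$ at rate $n^{-1/p}$; (ii) each $\eta^n$ is adapted and pathwise $n$-Lipschitz in $t$, hence is the $M_G^p$-limit of its time discretizations; (iii) quasi-continuity of $\tau$ is what places the discretized coefficients in $L_G^p(\Omega_{t_j})$. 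Steps (i) and (ii) are correct as written, and your closing remark correctly identifies why the naive coefficients $1_{\{\tau>t_j\}}$ would fail on $\{\tau=t_j\}$.

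The step you state too quickly is (iii). The characterization you invoke (bounded $+$ quasi-continuous $\Rightarrow$ membership) is the Denis--Hu--Peng description of $L_G^p(\Omega_T)$, whereas you need membership in $L_G^p(\Omega_{t_j})$, which is defined as the completion of $L_{ip}(\Omega_{t_j})$. Two facts must therefore be supplied: (a) $\tau\wedge t_j$ is $\mathcal{B}(\Omega_{t_j})$-measurable --- this is precisely where the hypothesis that $\tau$ is a \emph{stopping time}, and not merely a quasi-continuous random time, enters, and you only gesture at it; and (b) a $\mathcal{B}(\Omega_{t_j})$-measurable element of $L_G^p(\Omega_T)$ actually belongs to $L_G^p(\Omega_{t_j})$. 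Fact (b) is true --- for instance because $\bar{\mathbb{E}}_{t_j}[X]=X$ quasi-surely for such $X$ by the essential-supremum representation of the conditional $G$-expectation, and $\bar{\mathbb{E}}_{t_j}$ maps $L_G^p(\Omega_T)$ into $L_G^p(\Omega_{t_j})$ --- but it is not automatic from the definitions and a careful write-up should prove or cite it. With (a) and (b) made explicit, your proof is complete and matches the cited source's method.
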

\begin{lem}\label{s5}\rm(\cite[Remark 4.12]{liu})
Let $\tau\leq T $ be a quasi-continuous stopping time and $\eta\in M_G^ p ([0, T])$. Then for each $p \geq 1$, we have
$\eta I_{[0,\tau ]} \in  M_G^ p ([0, T]).$
\end{lem}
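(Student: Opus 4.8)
The plan is to prove the statement by the standard completion/density argument for $M_G^p([0,T])$, using Lemma \ref{s4} as the base case and exploiting that the factor $I_{[0,\tau]}$ is bounded by $1$, so that multiplication by it is a contraction for the $M_G^p$-norm. First I would record that, since $0\le I_{[0,\tau]}\le 1$ and $\bar{\mathbb{E}}$ is monotone, for any two processes one has $\|\zeta_1 I_{[0,\tau]}-\zeta_2 I_{[0,\tau]}\|_{M_G^p([0,T])}\le\|\zeta_1-\zeta_2\|_{M_G^p([0,T])}$. Since bounded simple processes are dense in $M_G^p([0,T])$ (truncate the coefficients of an approximating element of $M_G^{p,0}([0,T])$), if $\eta^n\to\eta$ in $M_G^p([0,T])$ with each $\eta^n$ a bounded simple process, then $\eta^n I_{[0,\tau]}\to\eta I_{[0,\tau]}$; hence it suffices to treat $\eta$ a bounded simple process, and by linearity a single building block $\eta=\xi I_{[s,u)}$ with $0\le s<u\le T$ and $\xi\in L_G^p(\Omega_s)$ bounded.

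For such a block I would first reduce the time-indicator to hitting-time indicators covered by Lemma \ref{s4}. Because the $M_G^p$-norm integrates in $t$, endpoints are Lebesgue-negligible, and one checks directly that, for a.e.\ $t$,
\[
I_{[s,u)}(t)\,I_{[0,\tau]}(t)=I_{[0,\tau\wedge u]}(t)-I_{[0,\tau\wedge s]}(t).
\]
Since $s,u$ are constants in $[0,T]$ and $\tau$ is a quasi-continuous stopping time, both $\tau\wedge s$ and $\tau\wedge u$ are again quasi-continuous stopping times bounded by $T$ (quasi-continuity is preserved under the continuous map $r\mapsto r\wedge c$, and the minimum with a constant is still a stopping time), so Lemma \ref{s4} gives $I_{[0,\tau\wedge u]},\,I_{[0,\tau\wedge s]}\in M_G^p([0,T])$, whence $I_{[s,u)}I_{[0,\tau]}\in M_G^p([0,T])$.

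It remains to multiply by the random coefficient $\xi$, and this is the step I expect to be the main obstacle, since the product of two $M_G^p$-objects need not lie in $M_G^p$ and one must respect the adaptedness encoded in the $L_G^p(\Omega_r)$-spaces. Here the boundedness of $\xi$ is what saves the argument. Note that $I_{[0,\tau\wedge u]}-I_{[0,\tau\wedge s]}$ vanishes for $t\le s$, so I would approximate it in $M_G^p([0,T])$ by simple processes $\phi^n$ and then replace $\phi^n$ by $\phi^nI_{[s,T]}$; these remain simple, are supported on $[s,T]$, and converge to the same limit because $I_{[s,T]}\le 1$ and the target is supported on $(s,T]$. Writing $\phi^nI_{[s,T]}=\sum_k\zeta_k^n I_{[r_k,r_{k+1})}$ with $r_k\ge s$ and $\zeta_k^n\in L_G^p(\Omega_{r_k})$, the inclusion $\Omega_s\subset\Omega_{r_k}$ together with $|\xi|\le\|\xi\|_\infty$ gives $\xi\zeta_k^n\in L_G^p(\Omega_{r_k})$ (boundedness makes $\bar{\mathbb{E}}[|\xi\zeta_k^n|^p 1_{\{|\xi\zeta_k^n|\ge N\}}]\to0$ and preserves the quasi-continuous version), so $\xi\phi^nI_{[s,T]}$ is a bona fide simple process. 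Finally $\|\xi\phi^nI_{[s,T]}-\xi(I_{[0,\tau\wedge u]}-I_{[0,\tau\wedge s]})\|_{M_G^p}\le\|\xi\|_\infty\,\|\phi^nI_{[s,T]}-(I_{[0,\tau\wedge u]}-I_{[0,\tau\wedge s]})\|_{M_G^p}\to0$, which places $\xi I_{[s,u)}I_{[0,\tau]}$ in $M_G^p([0,T])$. Summing the finitely many blocks of a bounded simple $\eta$ and invoking the density reduction of the first paragraph completes the proof.
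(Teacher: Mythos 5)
Your proof is correct, but note that the paper itself contains no proof of this lemma: it is quoted verbatim from \cite[Remark 4.12]{liu}, just as Lemma \ref{s4} is quoted from \cite[Proposition 4.10]{liu}, so there is no in-paper argument to compare against; what you have supplied is a self-contained derivation of the remark from Lemma \ref{s4}. The route is sound. The contraction property of multiplication by $I_{[0,\tau]}$, together with density of bounded simple processes, reduces the claim to a single block $\xi I_{[s,u)}$; the a.e.-$t$ identity $I_{[s,u)}I_{[0,\tau]}=I_{[0,\tau\wedge u]}-I_{[0,\tau\wedge s]}$, plus the observation that $\tau\wedge s$ and $\tau\wedge u$ are again quasi-continuous stopping times bounded by $T$, puts the time factor in $M_G^p([0,T])$ via Lemma \ref{s4}; and your treatment of the coefficient $\xi\in L_G^p(\Omega_s)$ --- forcing the approximating simple processes to be supported on $[s,T]$ so that each product coefficient $\xi\zeta_k^n$ lies in $L_G^p(\Omega_{r_k})$ by adaptedness and boundedness of $\xi$ --- is exactly the point where a careless argument would break, and you handle it correctly. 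In effect you have proved, in the special case needed, the stability fact that a bounded process times an $M_G^p$ process stays in $M_G^p$, which is presumably what Liu's remark rests on. The steps you use silently are all routine but worth naming: completeness of $M_G^p([0,T])$ (so that a norm-Cauchy approximation by elements of $M_G^p$ converging to a concrete process places that process in the space), the fact that processes differing only on a Lebesgue-null set of times define the same element of $M_G^p$ (this absorbs the discrepancy of your identity at $t\in\{s,u\}$), and the fact that the product of a bounded quasi-continuous variable with an element of $L_G^p$ remains in $L_G^p$.
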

According to \cite{Li}, for a stopping time $\tau\leq T $,   and $\eta\in M_G^ p ([0, T])$, it holds that
\begin{equation*}
\int_{0}^{ \tau}\eta_{s}dB_{s}
=\int_{0}^{T}\eta_{s}I_{[0,\tau]}(s)dB_{s}.
\end{equation*}

\paragraph{Acknowledgement.}
The authors are grateful to Professor Feng-Yu Wang for his  guidance and helpful comments, as well as Yongsheng  Song and   Xing Huang  for their patient helps, valuable suggestions and corrections.

\beg{thebibliography}{99}

\bibitem{bao} J. Bao, F.-Y. Wang, C. Yuan, Derivative formula and Harnack inequality for degenerate functionals SDEs,   Stoch. Dyn. 13 (2013), 1--22.

\bibitem{cohen} S. Cohen, S. Ji, and S. Peng, Sublinear expectations and Martingales in discrete time,   arXiv:1104.5390.

\bibitem{Denis} L. Denis, M. Hu, S. Peng, Function spaces and capacity related to a sublinear expectation: application to $G$-Brownian motion pathes, Potential Anal. 34 (2011) 139--161.

\bibitem{HJPS} M. Hu, S. Ji, S. Peng, Y. Song,  Comparison theorem, Feynman-Kac formula and Girsanov transformation for BSDEs driven by $G$-Brownian motion,  Stochastic Process. Appl. 124 (2014), 1170-1195.

\bibitem{HWZ} M. Hu, F. Wang,  G. Zheng,  Quasi-continuous random variables and processes under the $G$-expectation framework, Stochastic Process. Appl. 126 (2016),  2367--2387.

\bibitem{XY}X. Huang, F.-F. Yang, Harnack inequality and gradient estimate for (functional)
$G$-SDEs with degenerate noise, (2019) arXiv:1812.04290.

\bibitem{Li} X. Li, S. Peng, Stopping times and related It\^{o}'s calculus with $G$-Brownian motion, Stochastic Process, Appl. 121 (2011)  1492--1508.

\bibitem{liu} G. Liu, Exit times for semimartingales under nonlinear expectation, (2019) arXiv:1812.00838.

\bibitem{Osuka} E. Osuka, Girsanov's formula for $G$-Brownian motion, Stochastic Process. Appl. 123 (2013) 1301--1318.

\bibitem{peng2} S. Peng, $G$-Brownian motion and dynamic risk measures under volatility
uncertainty, (2007) arXiv: 0711.2834v1.

\bibitem{peng1} S. Peng, $G$-expectation, $G$-Brownian motion and related stochastic
calculus of It\^{o} type, in: Stochastic Analysis and Applications, in: Abel Symp., vol. 2, Springer, Berlin, 2007, pp.541--567.

\bibitem{peng4} S. Peng, Nonlinear expectations and stochastic calculus under uncertainty, (2010), arXiv: 1002.4546v1.

\bibitem{RY}  P. Ren, F.-F. Yang, Path independence of additive functionals for stochastic differential equations under
$G$-framework, Front. Math. China 14 (2019) 135--148.

\bibitem{song} Y. Song,   Gradient estimates for nonlinear diffusion semigroups by coupling methods, Sci. China Math., https://doi.org/10.1007/s11425-018-9541-6. %arXiv:1407.5426.
\bibitem{song2}   Y. Song, Properties of hitting times for $G$-martingales and their applications, Stochastic Process. Appl. 121 (2011)  1770--1784.

\bibitem{song1}  Y. Song, Some properties on $G$-evaluation and it's applications to $G$-martingale decomposition, Sci. China Math.  54 (2011)  287--300.

\bibitem{W3} F.-Y. Wang,  Estimates for invariant probability measures of degenerate SPDEs with singular and path-dependent drifts, Probab. Theory Related Fields 172 (2018), no. 3--4, 1181--1214.
\bibitem{W1} F.-Y. Wang, Harnack inequalities for stochastic partial differential equations,
Springer Briefs in Mathematics, Springer, New York, 2013, pp, ISBN: 978--1--4614--7933-8, 978--1--4614--7934--5.
\bibitem{W4} F.-Y. Wang,
Harnack inequality for SDE with multiplicative noise and extension to Neumann semigroup on nonconvex manifolds, (English summary)
Ann. Probab.  39 (2011),  1449--1467.

\bibitem{W} F.-Y. Wang, Logarithmic Sobolev inequalities on noncompact Riemannian manifolds, Probab. Theory Related Fields 109 (1997) 417--424.

\bibitem{Yang} F.-F. Yang,  Harnack inequality and applications for SDEs driven by $G$-Brownian motion, (2018) arXiv:1808.08712.

\bibitem{Xu} J. Xu,  H. Shang,  B. Zhang, A Girsanov type theorem under $G$-framework,
Stoch. Anal. Appl. 29 (2011) 386--406.

\end{thebibliography}

\end{document}